\DeclareMathSymbol{\leqslant}{\mathalpha}{AMSa}{"36} 
\DeclareMathSymbol{\geqslant}{\mathalpha}{AMSa}{"3E} 
\renewcommand{\leq}{\;\leqslant\;}                   
\renewcommand{\geq}{\;\geqslant\;}                   
\newtheorem{Th}{Theorem}
\newtheorem{Le}[Th]{Lemma}
\newtheorem{Pro}[Th]{Proposition}
\newtheorem{Def}[Th]{Definition}
\newtheorem{Cor}[Th]{Corollary}
\newtheorem{Rq}{Remark}
\newcommand{\cA}{\ensuremath{\mathcal A}}
\newcommand{\cB}{\ensuremath{\mathcal B}}
\newcommand{\cD}{\ensuremath{\mathcal D}}
\newcommand{\cE}{\ensuremath{\mathcal E}}
\newcommand{\cF}{\ensuremath{\mathcal F}}
\newcommand{\cH}{\ensuremath{\mathcal H}}
\newcommand{\cP}{\ensuremath{\mathcal P}}
\newcommand{\cR}{\ensuremath{\mathcal R}}
\newcommand{\cS}{\ensuremath{\mathcal S}}
\newcommand{\cX}{\ensuremath{\mathcal X}}
\newcommand{\bbC}{{\ensuremath{\mathbb C}} }
\newcommand{\bbD}{{\ensuremath{\mathbb D}} }
\newcommand{\bbE}{{\ensuremath{\mathbb E}} }
\newcommand{\bbN}{{\ensuremath{\mathbb N}} }
\newcommand{\bbP}{{\ensuremath{\mathbb P}} }
\newcommand{\bbR}{{\ensuremath{\mathbb R}} }
\newcommand{\Om}{\Omega}
\newcommand{\E}{\bbE}
\newcommand{\C}{\bbC}
\newcommand{\N}{\bbN}
\newcommand{\Ne}{\bbN^{\ast}}
\newcommand{\R}{\bbR}
\newcommand{\bbd}{\mathbf{d}}
\newcommand{\bx}{\bar{x}}
\newcommand{\bd}{{\bf d}}
\newcommand{\tbd}{{\bf\tilde{ d}}}
\newcommand{\te}{{\tilde{ e}}}
\newcommand{\tga}{{\tilde{\gamma}}}
\newcommand{\tN}{\tilde{N}}
\newcommand{\hOm}{\hat{\Om}}
\newcommand{\hcA}{\hat{\cA}}
\newcommand{\hbbP}{\hat{\bbP}}
\newcommand{\hE}{\hat{\E}}
\newcommand{\hw}{\hat{w}}
\newcommand{\crea}{\varepsilon^+}
\newcommand{\anni}{\varepsilon^-}
\newcommand{\sbbD}{\underline{\bbD}}
\newcommand{\1}{\mathbf{1}}
\title{Energy image density property
 and\\  the lent particle method for
Poisson measures}\date{}
\author{Nicolas BOULEAU and Laurent DENIS}
\begin{document}
\maketitle
\date{}

\begin{abstract}
 \hspace{.3cm}We introduce a new approach to absolute continuity of laws of
Poisson functionals. It is based on the {\it energy image density} property for
Dirichlet forms. The associated gradient is a local operator and gives rise to a nice formula called {\it the lent particle
method} which consists in adding a particle and taking it back after some calculation.

\end{abstract}
{\bf AMS 2000 subject classifications:} Primary {60G57, 60H05} ;
secondary {60J45,60G51}
\\
{\bf Keywords:}  {Poisson functionals, Dirichlet Forms, Energy
Image Density, Lévy Processes, Gradient}

\section{Introduction}
The aim of this article is to improve some tools provided by Dirichlet forms
for studying the regularity of Poisson functionals. First, the {\it energy image density property}
 (EID) which guarantees the existence of a density for $\mathbb{R}^d$-valued random variables whose
 carr\'e du champ matrix is almost surely regular. Second, the Lipschitz functional calculus for a
 {\it local gradient} satisfying the chain rule, which yields regularity results for functionals of L\'evy processes.

For a local Dirichlet structure with carr\'e du champ, the energy image density property is always
true for real-valued functions in the domain of the form (Bouleau \cite{bouleau1}, Bouleau-Hirsch
 \cite{bouleau-hirsch2} Chap. I \S7). It has been conjectured in 1986 (Bouleau-Hirsch
 \cite{bouleau-hirsch1} p251) that (EID) were true for any $\mathbb{R}^d$-valued function
 whose components are in the domain of the form for any local Dirichlet structure with carr\'e
  du champ. This has been shown for the Wiener space equipped with the Ornstein-Uhlenbeck form and
  for some other structures by Bouleau-Hirsch (cf. \cite{bouleau-hirsch2} Chap. II \S 5 and Chap. V example 2.2.4) and also for the Poisson space by A. Coquio \cite{coquio} when the intensity measure is the Lebesgue measure on an open set, but this conjecture
  being at present neither refuted nor proved in full generality, it has to be established in every
  particular setting. We will proceed in two steps : first (Part 2) we prove sufficient conditions for
  (EID) based mainly on a study of Shiqi Song \cite{song} using a characterization of Albeverio-R\"ockner
   \cite{albeverio-rockner}, then (Part 4) we show that the Dirichlet structure on the
   Poisson space obtained from a Dirichlet structure on the states space inherits from that one the
   (EID) property.

If we think a local Dirichlet structure with carr\'e du champ $(X,\cX
,\nu,\bbd,\gamma)$ as a description of the Markovian movement of a particle on the space $(X,\cX)$
whose transition semi-group $p_t$ is symmetric with respect to the measure $\nu$ and strongly continuous
on $L^2(\nu)$, the construction of the Poisson measure allows to associate to this structure a structure
on the Poisson space $(\Omega, \mathcal{A},\mathbb{P},\mathbb{D},\Gamma)$ which describes similarly the
movement of a family of independent identical particles whose initial law is the Poisson measure with
intensity $\nu$. This construction is ancient and may be performed in several ways.

The simplest one, from the point of view of Dirichlet forms, is based on products and follows faithfully
 the probabilistic construction (Bouleau \cite{bouleau2}, Denis \cite{denis}, Bouleau \cite{bouleau3} Chap.
 VI \S 3). The cuts that this method introduces are harmless for the functional calculus with the carr\'e
 du champ $\Gamma$, but it does not clearly show what happens for the generator and its domain.

Another way consists in using the transition semi-groups (Martin-L\"of \cite{martin-lof}, Wu \cite{wu},
partially Bichteler-Gravereaux-Jacod \cite{bichteler-gravereaux-jacod}, Surgailis \cite{surgailis}).
It is supposed that there exists a Markov process $x_t$ with values in $X$ whose transition semi-group
$\pi_t$ is a version of $p_t$ (cf. Ma-R\"ockner \cite{ma-rockner1} Chap. IV \S 3), the process starting
at the point $z$ is denoted by $x_t(z)$ and a probability space $(W,\mathcal{W},\Pi)$ is considered where
a family $(x_t(z))_{z\in X}$ of independent processes is realized. For a symmetric function $F$, the new
semi-group $P_t$ is directly defined by
$$(P_tF)(z_1,\ldots,z_n,\ldots)=\int F(x_t(z_1),\ldots,x_t(z_n),\ldots)\,d\Pi$$
Choosing as initial law the Poisson measure with intensity $\nu$ on $(X,\mathcal{X})$, it is possible to
show the symmetry and the strong continuity of $P_t$. This method, based on a deep physical intuition,
often used in the study of infinite systems of particles, needs a careful formalization in order to prevent
any drawback from the fact that the mapping $X\ni z\mapsto x_t(z)$ is not measurable in general due to the
independence. For extensions of this method see \cite{klr}.

In any case, the formulas involving the carr\'e du champ and the gradient require computations and key
 results on  {\it the configuration space } from which the construction may be performed as starting point.
  From this point of view the works are based either on the chaos decomposition (Nualart-Vives
  \cite{nualart-vives}) and provide tools in analogy with the Malliavin calculus on Wiener space,
  but non-local (Picard \cite{picard}, Ishikawa-Kunita \cite{ishikawa-kunita}, Picard \cite{picard2}) or on the expression
  of the generator on a sufficiently rich class and Friedrichs' argument (cf. what may be called the German school in spite of its cosmopolitanism, especially \cite{akr} and \cite{ma-rockner2}).

We will follow a way close to this last one. Several representations of the gradient are possible
(Privault \cite{privault}) and we will propose here a new one with the advantages of both locality
(chain rule) and simplicity on usual functionals. It provides a new method of computing the carr\'e du
champ $\Gamma$ --- the lent particle method --- whose efficiency is displayed on some examples. With respect to the announcement \cite{bouleau4} we have introduced a clearer new notation, the operator $\anni$ being shared from the integration by $N$.  Applications to stochastic differential equations driven by L\'evy processes will be gathered in an other article.\\

{\it \noindent Contents

1. Introduction

2. Energy image density property (EID)

3. Dirichlet structure on the Poisson space related to a Dirichlet structure on the states space

4. (EID) property on the upper space from (EID)
 property on the
bottom space and the domain $\mathbb{D}_{loc}$

5. Two examples.}

\section{The Energy Image Density property (EID)}
In this part we give sufficient conditions for a Dirichlet
structure to fulfill (EID) property. These conditions concern
finite dimensional cases and will be extended to the infinite
dimensional setting of Poisson measures in Part 4.

For each positive integer $d$, we denote by $\cB (\R^d )$ the
Borel $\sigma$-field on $\R^d$ and by $\lambda^d$ the Lebesgue measure
on $(\R^d ,\cB (\R^d ))$ and as usually when no confusion is
possible, we shall  denote it  by $dx$.
For $f$ measurable $f_*\nu$ denotes the image of the measure $\nu$ by $f$.

For a $\sigma$-finite measure $\Lambda$ on some measurable space, a Dirichlet form on $L^2(\Lambda)$ with carr\'e du champ $\gamma$
is said to satisfy (EID) if for any $d$ and for any
$\mathbb{R}^d$-valued function $U$ whose components are in the
domain of the form
$$ U_*[({\det}\gamma[U,U^t])\cdot
\Lambda ]\ll \lambda^d $$ where {\rm det} denotes the determinant.

\subsection{A sufficient condition on $(\R^r ,\cB (\R^r
))$}{\label{FiniteStructure}

Given $r\in\Ne$, for any $\cB (\R^r )$-measurable function
$u:\R^r\rightarrow \R$, all $i\in \{ 1,\cdots ,r\}$ and all
$\bx=(x_1 ,\cdots , x_{i-1}, x_{i+1},\cdots,x_{r})\in \R^{r-1}$,
we consider $u_{\bx }^{(i)}:\R \rightarrow \R$ the function
defined by \[\forall s\in \R \ ,u_{\bx}^{(i)} (s)=u ((\bx,s)_i
),\] where $(\bx ,s)_i
=(x_1,\cdots x_{i-1},s,x_{i+1},\cdots ,x_{r} ).$\\
Conversely if $x=(x_1 ,\cdots ,x_r)$ belongs to $\R^r$ we set $x^i
=(x_1 , \cdots , x_{i-1}, x_{i+1},\cdots,x_{r})$.\\
 Then following
standard notation, for any $\cB (\R )$ measurable function $\rho:
\R \rightarrow \R^+$, we denote by $R(\rho )$ the largest open
set on which $\rho^{-1}$ is locally integrable.\\
Finally, we are given $k:\R^r\rightarrow \R^+$ a Borel function and
$\xi=(\xi_{ij})_{1\leq i,j\leq r}$ an $\R^{r\times r}$-valued and
symmetric
Borel function.

 We make the
following assumptions which
generalize Hamza's condition (cf. Fukushima-Oshima-Takeda \cite{fukushima-oshima-takeda} Chap. 3 \S 3.1 ($3^\circ$), p105):\\

{\underline{Hypotheses (HG)}:
\begin{enumerate} \item For any $i\in\{1,\cdots ,r\}$ and
$\lambda^{r-1}$-almost all $\bx\in\{y\in\R^{r-1}:\ \int_{\R}
k^{(i)}_y (s)\, ds >0\}$, $k_{\bx}^{(i)}=0$, $\lambda^1$-a.e. on
$\R\setminus R(k_{\bx}^{(i)})$.
\item There exists an open set $O\subset \R^r$ such that $\lambda^r (\R^r \setminus O)=0$ and $\xi$
is locally elliptic on $O$ in the sense that for any compact subset
$K$, in $O$, there exists a positive constant $c_K$ such that
\[\forall x\in K,\,\forall c\in\mathbb{R}^r\; \sum_{i,j=1}^r \xi_{ij} (x)c_i c_j \geq
c_K |c|^2 .\]
\end{enumerate}
Following Albeverio-R\"ockner, Theorems 3.2 and 5.3 in \cite{albeverio-rockner} and
also R\"ockner-Wielens Section 4 in \cite{rockner-wielens}, we consider $\bd$ the
set of $\cB (\R^r)$-measurable functions $u$ in $L^2 (kdx)$, such
that for any $i\in\{ 1,\cdots ,r\}$, and $\lambda^{r-1}$-almost all
$\bx\in\R^{r-1}$,  $u^{(i)}_{\bx}$ has an absolute continuous
version $\tilde{u}^{(i)}_{\bx}$ on $R(k_{\bx}^{(i)})$ (defined
$\lambda^1$-a.e.) and such that $ \sum_{i,j}\xi_{ij}\frac{\partial
u}{\partial x_i } \frac{\partial u}{\partial x_j } \in L^1 (kdx),$
where
\[\frac{\partial u}{\partial x_i }
=\frac{d\tilde{u}^{(i)}_{\bx}}{ds}. \] Sometimes, we will simply
denote $\frac{\partial}{\partial x_i}$ by $\partial_i $.\\ And we
consider the following bilinear form on $\bd$:
\[\forall u,v\in\bd,\ e[u,v]=\frac12 \int_{\R^r}\sum_{i,j}
\xi_{ij}(x) \partial_i u(x)\partial_j v (x) k(x)\, dx .\] As usual
we shall simply denote $e[u,u]$ by $e[u]$.
We have
\begin{Pro} {\label{closability}}$(\bd ,e)$ is a local Dirichlet form on $L^2 (k dx)$ which admits a
carr\'e du champ operator $\gamma$ given by \[ \forall u,v\in\bd ,
\ \gamma [u,v]=\sum_{i,j} \xi_{ij} \partial_i u\partial_j v .\]
\end{Pro}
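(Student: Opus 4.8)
The plan is to establish the three constituent properties in turn --- closedness, the Markovian (contraction) property, and locality --- and then to exhibit $\gamma$ as the carr\'e du champ by a direct computation. Everything rests on the one fact that the partial derivatives $\partial_i u = d\tilde u^{(i)}_{\bx}/ds$, read off from the absolutely continuous versions on $R(k^{(i)}_{\bx})$, obey the usual Leibniz and chain rules $\la^1$-almost everywhere along $\la^{r-1}$-almost every line parallel to the $i$-th coordinate axis; combined with Fubini's theorem this transfers one-variable calculus to $\bd$.

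\emph{Closedness.} I would first show that $(\bd, e_1)$, with $e_1[u] = e[u] + \int u^2\,k\,dx$, is complete, i.e.\ $e$ is closed (hence closable) on $L^2(k\,dx)$. The one-dimensional case is precisely Hypothesis (HG)(1): it is the generalised Hamza condition ensuring that $v\mapsto\int_{\R}(\tilde v')^2\kappa\,ds$ is closed on $L^2(\kappa\,ds)$ for the slice densities $\kappa = k^{(i)}_{\bx}$, which is exactly the Albeverio--R\"ockner / R\"ockner--Wielens characterisation quoted above. For the $r$-dimensional matrix form, take an $e_1$-Cauchy sequence $(u_n)\subset\bd$; then $u_n\to u$ in $L^2(k\,dx)$ and $\gamma[u_n-u_m,u_n-u_m]\to 0$ in $L^1(k\,dx)$. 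On a compact $K\subset O$, local ellipticity (HG)(2) gives $\sum_i(\partial_i(u_n-u_m))^2\1_K\leq c_K^{-1}\gamma[u_n-u_m,u_n-u_m]$, so each $(\partial_i u_n\1_K)$ is Cauchy in $L^1(k\,dx)$; applying the one-dimensional closedness to the slices together with Fubini and a diagonal extraction, one identifies the limit of $\partial_i u_n$ on $O$ with $\partial_i u$ in the sense of the absolutely continuous versions, so $u\in\bd$ and $e_1[u_n-u]\to 0$. Since $\la^r(\R^r\setminus O)=0$, this suffices.

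\emph{Markov property, locality, and carr\'e du champ.} For a normal contraction $\varphi$ ($\varphi(0)=0$, $1$-Lipschitz) and $u\in\bd$, the slice $\varphi\circ\tilde u^{(i)}_{\bx}$ is absolutely continuous on $R(k^{(i)}_{\bx})$ with $|(\varphi\circ\tilde u^{(i)}_{\bx})'|\leq|(\tilde u^{(i)}_{\bx})'|$ $\la^1$-a.e.; hence $\varphi(u)\in L^2(k\,dx)$, $|\partial_i(\varphi(u))|\leq|\partial_i u|$, and since $\xi$ is symmetric and, by (HG)(2), nonnegative $k\,dx$-a.e., one gets $\gamma[\varphi(u),\varphi(u)]\leq\gamma[u,u]$ pointwise, so $e[\varphi(u)]\leq e[u]$: the form is Markovian, hence a Dirichlet form. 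Locality is immediate: if $u$ is constant on a neighbourhood of $\mathrm{supp}(v)$, the slices $\tilde u^{(i)}_{\bx}$ are locally constant there, so $\partial_i u=0$ and $\gamma[u,v]=0$ $k\,dx$-a.e., whence $e[u,v]=0$. For the carr\'e du champ, Cauchy--Schwarz for the nonnegative symmetric $\xi$ gives $|\gamma[u,v]|\leq\gamma[u,u]^{1/2}\gamma[v,v]^{1/2}\in L^1(k\,dx)$ for $u,v\in\bd$; in particular $\bd\cap L^\infty$ is an algebra via the slicewise Leibniz rule. It remains to verify the defining identity: for $u,v\in\bd\cap L^\infty$,
\[
2\,e[u,uv] - e[u^2,v] = \int_{\R^r} v\,\gamma[u,u]\,k\,dx ,
\]
which is a one-line expansion using $\partial_j(uv)=u\,\partial_j v + v\,\partial_j u$ and $\partial_i(u^2)=2u\,\partial_i u$: the $\partial_j v$ terms cancel, leaving $\int\sum_{i,j}\xi_{ij}\,v\,\partial_i u\,\partial_j u\,k\,dx$. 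Hence $\gamma$ is the carr\'e du champ operator of $(\bd,e)$.

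\emph{Main obstacle.} The only genuinely delicate point is the closedness in the first step: passing from the one-dimensional Hamza-type closedness of the slices --- which is exactly what Hypothesis (HG)(1) provides --- to the closedness of the matrix-weighted form on $\R^r$. The local ellipticity (HG)(2) is the precise device that lets energy-norm convergence of the gradient be decoded componentwise on a set of full Lebesgue measure, so that the slicewise theory can be invoked. The remaining steps are routine one-variable calculus and Fubini.
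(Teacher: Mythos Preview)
Your proposal is correct and follows essentially the same route as the paper: closedness is the only nontrivial point, and both arguments use the local ellipticity (HG)(2) on relatively compact subsets of $O$ to dominate the identity-matrix gradient form, then invoke the Albeverio--R\"ockner closedness result (which encodes the Hamza-type condition (HG)(1)) and patch over an exhaustion of $O$ with Fatou/diagonal extraction; the paper packages this via an auxiliary form $e_W[u]=\tfrac12\int_W|\nabla u|^2 k\,dx$ on each $W\Subset O$, while you go componentwise, but the mechanism is the same. One minor slip: from $\gamma[u_n-u_m]\to 0$ in $L^1(k\,dx)$ and ellipticity you get $(\partial_i u_n\mathbf{1}_K)$ Cauchy in $L^2(k\,dx)$, not $L^1$, which is in fact what you need to feed into the slicewise closedness.
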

\begin{proof} All is clear excepted the fact that $e$ is a closed
form on $\bd$. To prove it, let us consider a sequence
$(u_n)_{n\in\Ne}$ of elements in $\bd$ which converges to $u$ in
$L^2 (kdx)$ and such that $\lim_{n,m\rightarrow +\infty} e[u_n -u_m
]=0$. Let $W\subset O$, an open subset which satisfies
$\bar{W}\subset O$ and such that $\bar{W}$ is compact.

Let $\bd_W$ be the set of $\cB (\R^r)$-measurable functions $u$ in
$L^2 ({\bf 1}_W \times k\,dx)$, such that for any $i\in\{ 1,\cdots
,r\}$, and $\lambda^{r-1}$-almost all $\bx\in\R^{r-1}$,
$u^{(i)}_{\bx}$ has an absolute continuous version
$\tilde{u}^{(i)}_{\bx}$ on $R(({\bf 1}_W\times k)_{\bx}^{(i)})$
and such that $ \sum_{i,j}\xi_{ij}\frac{\partial u}{\partial x_i }
\frac{\partial u}{\partial x_j } \in L^1 ({\bf 1}_W \times
k\,dx),$ equipped with the bilinear form \[\forall u,v\in\bd_W,
e_W[u,v]=\frac12 \int_{W}\sum_{i} \partial_i u(x)\partial_i v (x)
k(x)\, dx=\frac12 \int_{W}\nabla u(x)\cdot \nabla v (x) k(x)\, dx
.\] One can easily verify, since $W$ is an open set, that for all
$\bx \in\R^{r-1}$
\begin{equation}{\label{open}}
S_{\bx}^i (W)\cap R(k_{\bx}^{(i)}) \subset R(({\bf 1}_W \times
k)_{\bx}^{(i)}),
\end{equation}
where $ S_{\bx}^i (W)$ is the open set $\{s\in\R :\ (\bx ,s)_i \in
W\}$.\\
Then it is clear that the function ${\bf 1}_W \times k$ satisfies
property 1. of { (HG)} and as a consequence of Theorems 3.2 and
5.3 in \cite{albeverio-rockner}, $(\bd_W ,e_W)$ is a Dirichlet
form on $L^2 ({\bf 1}_W \times
 kdx)$. \\
We have for all $n,m\in\N$
\[ e_W (u_n -u_m)=\frac12 \int_W  |\nabla u_n (x)-\nabla u_m(x)|^2\, k(x)dx \leq
\frac{1}{c_{\bar{W}}}e (u_n -u_m),\] as $(\bbd ,e_W)$ is a closed form, we conclude that $u$ belongs to $\bd_W$. \\
Consider now an exhaustive sequence $(W_m )$, of relatively
compact open sets in $O$ such that for all $m\in\N$, $\bar{W}_m
\subset W_{m+1}\subset  O$. We have that for all $m$, $u$ belongs
to $\bd_{W_m}$ hence by Theorems 3.2 and 5.3 in
\cite{albeverio-rockner}, for all $i\in\{ 1,\cdots ,r\}$, and
$\lambda^{r-1}$-almost all $\bx\in\R^{r-1}$,  $u^{(i)}_{\bx}$ has
an absolute continuous version on $\bigcup_{m=1}^{+\infty}R(({\bf
1}_{W_m}\times k)_{\bx}^{(i)})$. Using relation \eqref{open}, we
have
$$S_{\bx}^i (O)\cap R(k_{\bx}^{(i)})=\bigcup_{m=1}^{+\infty}S_{\bx}^i (W_m)\cap R(k_{\bx}^{(i)})
 \subset\bigcup_{m=1}^{+\infty}R(({\bf
1}_{W_m}\times k)_{\bx}^{(i)}) .$$ As $\lambda^{r} (\R^r \setminus
O)=0$, we get that for almost all $\bx \in\R^{r-1}$,
$\bigcup_{m=1}^{+\infty}R(({\bf 1}_{W_m}\times k)_{\bx}^{(i)})=R(
k_{\bx}^{(i)})$ $\lambda^1$-a.e. Moreover, by a diagonal
extraction, we have that a subsequence of $(\nabla u_n ) $
converges $kdx$-a.e. to $\nabla u$, so by Fatou's Lemma, we
conclude that $u\in\bd$ and then $\lim_{n\rightarrow +\infty}e[u_n
-u]=0$, which is the desired result.
\end{proof}

 For any $d\in\Ne$, if $u=(u_1
,\cdots ,u_d )$ belongs to $\bd^d$, we shall denote by $\gamma [u]$
the matrix $(\gamma [u_i ,u_j ])_{1\leq i,j\leq d}$.

\begin{Th}{\label{EID1}} (EID) property : the structure $(\mathbb{R}^r, \mathcal{B}(\mathbb{R}^r), k\,dx, \mathbf{ d}, \gamma)$ satisfies
\[\forall d\in\Ne \ \forall u\in \bd^d\ \ u_*[(\det\gamma[u])\cdot
kdx]\ll \lambda^d. \]
\end{Th}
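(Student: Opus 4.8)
The strategy is to reduce (EID) for the singular structure $(\R^r,k\,dx,\bd,\gamma)$ to the classical case of the Ornstein--Uhlenbeck--type structure on $\R^r$ with the Lebesgue measure, for which (EID) is known (Bouleau--Hirsch \cite{bouleau-hirsch2}). The key observation is that the form $e$ is, after localisation to a relatively compact open set $W$ with $\bar W\subset O$, comparable to the uniformly elliptic form $e_W$ with smooth ($=$ Lebesgue) reference measure restricted to $W$, so the carr\'e du champ inherits the good properties there. The plan is to first establish (EID) on each such $W$ and then glue the pieces using the locality of $\gamma$ together with the fact that $O$ has full Lebesgue measure.

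First I would fix $u=(u_1,\ldots,u_d)\in\bd^d$ and, for each relatively compact open $W$ with $\bar W\subset O$, consider the restrictions $u_i|_W$. By the ellipticity assumption 2.\ of (HG), on $W$ the matrix $\xi(x)$ satisfies $c_{\bar W}\,\mathrm{Id}\leq \xi(x)\leq C_{\bar W}\,\mathrm{Id}$, hence $\det\gamma[u](x)$ and $\det\bigl(\sum_k \partial_k u_i\,\partial_k u_j\bigr)(x)$ vanish on exactly the same set; more precisely, $\det\gamma[u]>0$ at $x$ iff the Jacobian matrix $(\partial_k u_i(x))$ has rank $d$. So it suffices to prove $u_*\bigl[(\det(\nabla u_i\cdot\nabla u_j))\cdot k\,dx\,|_W\bigr]\ll\lambda^d$. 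Since $k$ is only a nonnegative density, I would further note that on $W$ we may discard $\{k=0\}$ (it contributes nothing to the pushforward measure) and, on $\{k>0\}$, the Sobolev-type regularity of $u_i$ along lines guaranteed by membership in $\bd$ (the absolutely continuous versions $\tilde u^{(i)}_{\bx}$ on $R(k^{(i)}_{\bx})$, and the fact, used in Proposition \ref{closability}, that these exhaust $R(k^{(i)}_{\bx})$) lets us invoke the co-area / change-of-variables argument exactly as in the Lebesgue case: the image of the set where the Jacobian has rank $<d$ is $\lambda^d$-negligible. This is the heart of the Bouleau--Hirsch proof of (EID) on Wiener space in finite dimension, and it applies verbatim once one has: (i) absolutely continuous versions on lines, (ii) uniform ellipticity, (iii) a density $k$ that is locally bounded away from $0$ and $\infty$ — the last point being the reason for localising to $W$ and for restricting to $\{k>0\}$, where one can absorb $k$ into the measure without affecting absolute continuity of the image.

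Next I would globalise. Take an exhaustive sequence $(W_m)$ of relatively compact open sets with $\bar W_m\subset W_{m+1}\subset O$ and $\bigcup_m W_m=O$, as in the proof of Proposition \ref{closability}. For each $m$, by the previous step, $u_*\bigl[(\det\gamma[u])\cdot k\,dx\,|_{W_m}\bigr]\ll\lambda^d$. By the locality of the Dirichlet form (hence of $\gamma$), for a Borel set $A\subset\R^d$ with $\lambda^d(A)=0$ we get $(\det\gamma[u])\cdot k\,dx\bigl(u^{-1}(A)\cap W_m\bigr)=0$ for every $m$; letting $m\to\infty$ and using $\lambda^r(\R^r\setminus O)=0$ (so that $k\,dx$ is carried by $O=\bigcup_m W_m$) yields $(\det\gamma[u])\cdot k\,dx\bigl(u^{-1}(A)\bigr)=0$, i.e.\ $u_*[(\det\gamma[u])\cdot k\,dx]\ll\lambda^d$. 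I would also record that the whole argument is stable under the truncation $u\mapsto (\arctan u_1,\ldots,\arctan u_d)$ or multiplication by cutoff functions, which may be needed to ensure the localised functions lie in the right domains $\bd_{W_m}$ — this is routine given the chain rule for $\gamma$.

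\textbf{Main obstacle.}
The delicate point is not the gluing but step one: transferring the finite-dimensional (EID) from the uniformly elliptic, Lebesgue-reference setting to the present one where the reference density $k$ is merely Borel and possibly very singular, and where the Sobolev regularity of the $u_i$ holds only along lines on the sets $R(k^{(i)}_{\bx})$ rather than in a classical Sobolev space. One must check carefully that the rank-based change-of-variables estimate survives when integrating against $k\,dx$: the right way, as indicated above, is to localise to $W_m$, work on $\{k>0\}$ where $1/k\in L^1_{loc}$ ensures the line sections $u^{(i)}_{\bx}$ are absolutely continuous on (essentially all of) the relevant line, and observe that restricting the reference measure to a set where it is mutually absolutely continuous with $\lambda^r$ does not enlarge the image null sets. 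Handling the interaction between the line-wise regularity of the $u_i$ and the rank condition on the full Jacobian — i.e.\ producing a bona fide a.e.-defined Jacobian matrix and applying the area formula — is where the citations to Song \cite{song} and Albeverio--R\"ockner \cite{albeverio-rockner} do the real work, and I would lean on those rather than reprove the finite-dimensional (EID) from scratch.
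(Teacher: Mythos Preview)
Your overall shape---use line-wise absolute continuity to get a well-defined Jacobian, invoke a co-area/change-of-variables argument to control $u_*[Ju\cdot dx]$, then use ellipticity of $\xi$ on $O$ to pass from $Ju$ to $\det\gamma[u]$---is the right one and matches the paper. But two things go wrong in your execution.

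First, the localisation to $W_m$ is superfluous and obscures the argument. In the paper, ellipticity is used exactly once, at the very end, to observe that $\{Ju>0\}=\{\det\gamma[u]>0\}$ a.e.\ on $O$; no compactness or uniform lower bound on $\xi$ is needed anywhere else. The core estimate $u_*[Ju\cdot k\,dx]\ll\lambda^d$ is obtained globally.

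Second, and more seriously, your step (iii) is a mistake. Localising to a compact $W$ and restricting to $\{k>0\}$ does \emph{not} make $k$ locally bounded away from $0$ and $\infty$; $k$ is an arbitrary nonnegative Borel function and can oscillate wildly on $W\cap\{k>0\}$. So you cannot reduce to a classical Sobolev/Lebesgue situation that way, and the ``it applies verbatim'' claim is unjustified. Fortunately (iii) is not what is actually needed. What you need is that $u$ is \emph{approximately differentiable} $\lambda^r$-a.e.\ on the set $A=\{x:\ x_i\in R(k^{(i)}_{x^i})\ \forall i\}$, and this is exactly what the line-wise absolute continuity (via Albeverio--R\"ockner) together with Stepanoff's theorem gives. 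Once you have approximate differentiability, Federer's Lusin-type theorem produces $C^1$ maps $u^n$ agreeing with $u$ (and with its approximate Jacobian) off a set of measure $\leq 1/n$, and the co-area formula applied to $u^n$ yields $\int \1_B(u)\,Ju\,k\,dx=0$ for every $\lambda^d$-null $B$. This is the step your proposal defers to citations without naming; it is the whole content of the proof, and it neither requires nor uses any lower bound on $k$.
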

\begin{proof}  Let us mention that a proof was
given by S. Song in \cite{song} Theorem 16, in the more general
case of {\it{classical Dirichlet forms}}. Following his ideas, we present here a shorter proof.\\
The proof is based on the {\it co-area formula } stated by H.
Federer
in  \cite{federer}, Theorems 3.2.5 and 3.2.12.\\
We first introduce the subset $A\subset \R^r$:
\[ A=\{ x\in\R^r :\ x_i \in R(k_{x^i}^{(i)})\ i=1,\cdots ,r\}.\]
As a consequence of property 1. of (HG), $\int_{A^c} k(x)dx =0.$\\
Let $u=(u_1 ,\cdots ,u_d )\in \bbd^d$. We follow the notation and
definitions
introduced by Bouleau-Hirsch in \cite{bouleau-hirsch2}, Chap. II Section 5.1.\\
Thanks to Theorem 3.2 in \cite{albeverio-rockner} and Stepanoff's
Theorem (see Theorem 3.1.9 in \cite{federer} or Remark 5.1.2 Chap. II in
\cite{bouleau-hirsch2}), it is clear that for almost all $a\in A$,
the {\it approximate derivatives} $\makebox{ap}\frac{\partial
u}{\partial x_i}$ exist for $i=1,\cdots ,r$ and if we set: $Ju
=\left[ \det\left( \left( \sum_{k=1}^r
\partial_k u_i
\partial_k u_j \right)_{1\leq i,j\leq d}\right)\right]^{1/2}$, this  is equal $kdx$ a.e. to the
determinant of the {\it approximate Jacobian matrix } of $u$.
Then, by Theorem 3.1.4 in \cite{federer}, $u$ is {\it
approximately differentiable} at almost all points $a$ in $A$.\\
We denote
by $\cH^{r-d}$ the $(r-d)$-dimensional Hausdorff measure on $\R^r$.\\
 As a consequence of Theorems 3.1.8, 3.1.16 and Lemma 3.1.7 in \cite{federer},
 for all $n\in\Ne$, there exists a map $u^n :\R^r \rightarrow
 \R^d$ of class $\mathcal{C}^1$ such that
 \[ \lambda^r (A\setminus \{ x: u(x)=u^n (x)\}) \leq \frac1n\]
 and
 \[ \forall a\in \{ x: u(x)=u^n (x)\},\ \makebox{ap}\frac{\partial
 u}{\partial x_i} (a)= \makebox{ap}\frac{\partial
 u^n}{\partial x_i} (a),\ i=1,\cdots ,r.\]
 Assume first that $d\leq r$. Let  $B$ be a Borelian set in $\R^d$ such that
 $\lambda^r
(B)=0$ . Thanks to the co-area formula we have
\begin{eqnarray*}
\int_{\R^r  }\1_B (u(x))Ju(x)k(x)\, dx &=& \int_{A}\1_B
(u(x))Ju(x)k(x)\, dx\\&=&\lim_{n\rightarrow +\infty}\int_{A\cap \{
u=u^n \}}\1_B (u(x))Ju(x)k(x)\, dx\\&=&\lim_{n\rightarrow
+\infty}\int_{A\cap \{ u=u^n \}}\1_B (u^n (x))Ju^n (x)k(x)\, dx
\\&&\hspace{-2cm}=\lim_{n\rightarrow +\infty}\int_{\R^r}\left(\int_{(u^n )^{-1}(y)}
\1_{A\cap \{ u=u^n \}} (x)\1_B (u^n (x))k(x)d\cH^{r-d}(x)\right) \, d\lambda^r (y)\\
&&\hspace{-2cm}=\lim_{n\rightarrow +\infty}\int_{\R^r}\1_B
(y)\left(\int_{(u^n )^{-1}(y)}\1_{A\cap \{ u=u^n \}} (x)
k(x)d\cH^{r-d}(x)\right) \, d\lambda^r (y)\\
&&\hspace{-2cm}=0\end{eqnarray*} So that, $u_\ast(Ju\cdot
kdx)\ll\lambda_d$. \\We have
\[ Ju=\left[ \det \left(Du\cdot (Du)^t\right) \right]^{1/2}\makebox{ and } \gamma (u)=
Du\cdot\xi\cdot Du^t ,\] where $Du$ is the $d\times r$ matrix:
$\left( \displaystyle\frac{\partial u_i}{\partial x_k}
\right)_{1\leq i\leq d ; 1\leq k\leq r}$ . \\
As $\xi (x)$ is symmetric and positive definite on $O$ and
$\lambda^r (\R^r \setminus O)=0$, we have
\[ \{ x\in A; \ Ju (x)>0\} = \{ x\in A;\  \det (\gamma (u)(x))>0\} \ a.e.,\]
and this ends the proof in this case.\\
Now, if $d>r$, $\det(\gamma (u))=0$ and the result is trivial.
\end{proof}

\subsection{ The case of a product structure}
We consider a
sequence of functions $\xi^i$ and $k_i$, ${i\in\Ne}$, $k_i$ being non-negative Borel functions such that
$\int_{\R^r} k_i(x)\, dx =1$. We assume that for all $i\in\Ne$, $\xi^i$ and
$k_i$ satisfy hypotheses (HG) so that, we can construct, as for
$k$ in the previous subsection, the Dirichlet form $(\bd_i ,e_i)$ on
$L^2 (\R^r ,k_i dx)$ associated to the carr\'e du champ operator
$\gamma_i$ given by:
 \[ \forall u,v\in\bd_i ,
\ \gamma_i [u,v]=\sum_{k,l} \xi_{kl}^i \partial_k u\partial_l v
.\]
 We now consider the product Dirichlet form $(\tbd ,\te
)=\prod_{i=1}^{+\infty} (\bd_i ,e_i )$ defined on the product space
$\left((\R^r )^{\Ne}, (\cB (R^r ))^{\Ne}\right)$ equipped with the
product probability $\Lambda = \prod_{i=1}^{+\infty}k_i dx$.
We denote by $(X_n )_{n\in\Ne}$  the coordinates
maps on $(\R^r )^{\Ne}$.\\
Let us recall that $U=F(X_1 ,X_2 ,\cdots ,X_n ,\cdots)$ belongs to
$\tbd$ if and only if : \begin{enumerate} \item $U$ belongs to
$L^2 \left((\R^r )^{\Ne}, (\cB (\R^r ))^{\Ne} ,\Lambda \right)$.
\item For all $k\in\Ne$ and $\Lambda$-almost all $(x_1 ,\cdots
,x_{k-1} ,x_{k+1},\cdots )$ in $(\R^r )^{\Ne}$, $F(x_1 ,\cdots
,x_{k-1},\cdot ,x_{k+1},\cdots )$ belongs to $\bd_k$. \item $\te
(U)=\displaystyle\sum_k \int_{(\R^r)^{\Ne}}e_k (F(X_1 (x),\cdots
,X_{k-1}(x),\cdot ,X_{k+1}(x),\cdots ))\, \Lambda (dx)<+\infty.$
\end{enumerate}
Where as usual,  the form $e_k$ acts only on the
$k$-th coordinate.\\
It is also well known that $(\tbd ,\te )$ admits a carré du champ
$\tga$ given by
\[ \tga [U]=\sum_k \gamma_k [F(X_1 ,\cdots ,X_{k-1},\cdot ,X_{k+1},\cdots
)](X_k).\] To prove that  (EID) is satisfied by this structure, we
first prove that it is satisfied for a finite product. So, for all
$n\in\Ne$, we consider $(\tbd_n ,\te_n )=\prod_{i=1}^{n} (\bd_i
,e_i )$ defined on the product space $\left((\R^r )^{n}, (\cB (R^r
))^{n}\right)$ equipped with the product probability $\Lambda_n =
\prod_{i=1}^{n}k_i dx$. By restriction, we keep the same notation
as the one introduced for the infinite product. We know that this
structure admits a carr\'e du champ operator $\tga_n$ given by $
\tga_n =\sum_{i=1}^n \gamma_i.$
\begin{Le}{\label{lemmeEID}} For all $n\in\Ne$, the Dirichlet structure $(\tbd_n ,\te_n )$ satifies (EID):\\
\[\forall d\in\Ne \ \forall U\in (\tbd_n )^d\ \ U_*[({\det}\tga_n[U])\cdot
\Lambda_n ]\ll \lambda^d .\]
\end{Le}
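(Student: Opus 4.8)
The plan is to recognise $(\tbd_n, \te_n)$ as a Dirichlet structure of exactly the type covered by Theorem \ref{EID1}, but on $\R^{nr}$ in place of $\R^r$, and then simply to invoke that theorem. Identify $(\R^r)^n$ with $\R^{nr}$, writing a point as $x = (x_1, \cdots, x_n)$ with each $x_i \in \R^r$, so that the $nr$ coordinates are naturally indexed by pairs $(i,k)$ with $1 \leq i \leq n$ and $1 \leq k \leq r$. Set
$$K(x) = \prod_{i=1}^n k_i(x_i), \qquad \Xi(x) = \mathrm{diag}\bigl(\xi^1(x_1), \cdots, \xi^n(x_n)\bigr),$$
so that $K \geq 0$ is Borel, $\Xi$ is a symmetric Borel $nr\times nr$-matrix, $\Lambda_n$ has density $K$ with respect to Lebesgue measure on $\R^{nr}$, and, by the expressions for $\te_n$ and $\tga_n = \sum_i \gamma_i$ recalled above, $\tga_n[U] = \sum_{(i,k),(j,l)} \Xi_{(i,k)(j,l)}\,\partial_{(i,k)}U\,\partial_{(j,l)}U$ and $\te_n[U] = \frac12 \int \tga_n[U]\,K\,dx$ (the cross terms between different blocks in $\tga_n$ vanishing because $\Xi$ is block-diagonal).

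First I would check that $(K,\Xi)$ satisfies Hypotheses (HG) on $\R^{nr}$. For part 2: each $\xi^i$ is locally elliptic on an open set $O_i$ with $\lambda^r(\R^r\setminus O_i)=0$, so $O := O_1\times\cdots\times O_n$ is open, $\lambda^{nr}(\R^{nr}\setminus O)=0$, and any compact $L\subset O$ is contained in a product $L_1\times\cdots\times L_n$ of compacts $L_i\subset O_i$; then for $x\in L$ and $c=(c_1,\cdots,c_n)\in\R^{nr}$ one gets $\sum_{(i,k),(j,l)}\Xi_{(i,k)(j,l)}(x)c_{(i,k)}c_{(j,l)} = \sum_i \sum_{k,l}\xi^i_{kl}(x_i)c_{(i,k)}c_{(i,l)} \geq \bigl(\min_i c_{L_i}\bigr)|c|^2$. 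For part 1: fix a direction $(i,k)$; letting $\bx$ run over the remaining $nr-1$ coordinates, the one-dimensional section $K^{(i,k)}_\bx(s)$ equals $c(\bx)$ times the corresponding section of $k_i$, where $c(\bx)=\prod_{m\ne i}k_m(x_m)\geq 0$ does not depend on $s$. When $c(\bx)=0$ this section vanishes identically (so the hypothesis of (HG).1 is not met and nothing has to be checked), and when $c(\bx)>0$ the set $R(K^{(i,k)}_\bx)$ coincides with $R$ of the corresponding section of $k_i$, and $K^{(i,k)}_\bx$ vanishes exactly where that section does; so property 1 of (HG) for $k_i$ together with Fubini's theorem yields property 1 of (HG) for $K$.

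Next I would identify the domains: unwinding conditions 1--3 stated just before this lemma (with the product taken over $1\leq k\leq n$) together with the definition of each $\bd_i$ from Section \ref{FiniteStructure}, and using Fubini, one sees that $U\in\tbd_n$ if and only if $U\in L^2(K\,dx)$, for every direction $(i,k)$ and $\lambda^{nr-1}$-almost all $\bx$ the section $U^{(i,k)}_\bx$ admits an absolutely continuous version on $R(K^{(i,k)}_\bx)$, and $\tga_n[U]\in L^1(K\,dx)$; on this common domain $\te_n$ and $\tga_n$ coincide with the form $e$ and the carr\'e du champ $\gamma$ associated with $(K,\Xi)$ in Section \ref{FiniteStructure}. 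Thus $(\tbd_n,\te_n)$ is precisely the structure $(\R^{nr},\cB(\R^{nr}),K\,dx,\bd,\gamma)$ to which Theorem \ref{EID1} applies, and that theorem gives, for all $d\in\Ne$ and all $U\in(\tbd_n)^d$, that $U_*[(\det\tga_n[U])\cdot\Lambda_n]\ll\lambda^d$.

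I expect the main obstacle to be the domain identification of the previous paragraph: one must be careful that the abstract coordinatewise product form genuinely coincides with the concrete form built from the single density $K$ on $\R^{nr}$ via the Albeverio--R\"ockner characterisation. This is a routine but slightly tedious matter of Fubini's theorem and of the equality (up to a $\lambda^1$-null set) between $R(K^{(i,k)}_\bx)$ and $R$ of the corresponding section of $k_i$, valid for $\lambda^{nr-1}$-almost all $\bx$ with $\prod_{m\ne i}k_m(x_m)>0$ --- the remaining $\bx$ being irrelevant since there $K^{(i,k)}_\bx\equiv 0$ and the conditions defining both forms ignore the behaviour of $U$ on $\{K=0\}$.
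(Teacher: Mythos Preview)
Your proposal is correct and follows exactly the same route as the paper: the paper's proof is a single sentence observing that $(\tbd_n,\te_n)$ is a particular case of Theorem~\ref{EID1} on $\R^{nr}$ with $\xi$ replaced by the block-diagonal matrix $\Xi$ of the $\xi^i$ and $k$ replaced by the product density $K=\prod_i k_i$. You have simply supplied the details the paper omits --- the verification of (HG) for $(K,\Xi)$ and the domain identification --- and your caveat about the latter is well placed, though it is indeed routine.
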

\begin{proof} The proof consists in remarking that
this is nothing but a particular case of Theorem \ref{EID1} on
$\R^{nd}$, $\xi$ being replaced by $\Xi$, the diagonal matrix of the $\xi^i$,  and the density being the
product
density.
\end{proof}

 As a consequence of Chapter V Proposition 2.2.3. in Bouleau-Hirsch
\cite{bouleau-hirsch2}, we have
\begin{Th} {\label{EID2}}The Dirichlet structure $(\tbd ,\te )$ satisfies (EID):\\
\[\forall d\in\Ne \ \forall U\in \tbd^d\ \ U_*[({\det}\tga[U])\cdot
\Lambda ]\ll \lambda^d .\]
\end{Th}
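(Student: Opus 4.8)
The plan is to deduce (EID) for the infinite product structure $(\tbd,\te)$ from (EID) for the finite products $(\tbd_n,\te_n)$, which was established in Lemma \ref{lemmeEID}, via the abstract stability result quoted as Chapter V, Proposition 2.2.3 in Bouleau--Hirsch \cite{bouleau-hirsch2}. First I would recall the content of that proposition: if $(\bbD,\gamma)$ is a Dirichlet structure that arises as a (suitable) limit of substructures each of which satisfies (EID), then $(\bbD,\gamma)$ itself satisfies (EID). Concretely, the infinite product $(\tbd,\te)$ is the inductive/projective limit of the finite products $(\tbd_n,\te_n)$ in the sense made precise there, and the finite-dimensional conditioning argument underlying that proposition transfers (EID) to the limit.

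The main steps would be as follows. Fix $d\in\Ne$ and $U=(U_1,\dots,U_d)\in\tbd^d$. For each $n$, let $\cF_n$ be the $\sigma$-field generated by the first $n$ coordinates $X_1,\dots,X_n$, and let $U^n=\E[U\mid\cF_n]$ (componentwise). The key facts to invoke are: (i) $U^n\in(\tbd_n)^d$, viewed as a function of $(X_1,\dots,X_n)$, because conditional expectation with respect to coordinate $\sigma$-fields operates as a bounded projection on the product Dirichlet space; (ii) $U^n\to U$ in $\tbd$ (i.e. in $L^2(\Lambda)$ and in energy norm), by the martingale convergence together with the closability of $\te$; and (iii) the carr\'e du champ is compatible with these projections in the sense that $\tga_n[U^n]$ computed in the $n$-th structure coincides with the relevant conditional object and $\tga_n[U^n]\nearrow$ (or converges appropriately) to $\tga[U]$. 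From Lemma \ref{lemmeEID} each $U^n_*[(\det\tga_n[U^n])\cdot\Lambda_n]\ll\lambda^d$. One then passes to the limit: on the set where $\det\tga[U]>0$, for $n$ large the approximating determinants are also positive, and a Borel set $B$ with $\lambda^d(B)=0$ satisfies $\int \1_B(U)\,(\det\tga[U])^{1/2}\,d\Lambda=\lim_n\int\1_B(U^n)(\det\tga_n[U^n])^{1/2}\,d\Lambda=0$, whence $U_*[(\det\tga[U])\cdot\Lambda]\ll\lambda^d$. This is exactly the mechanism packaged inside Bouleau--Hirsch Proposition V.2.2.3, so in the write-up it suffices to verify that the hypotheses of that proposition hold for the tower $(\tbd_n,\te_n)\uparrow(\tbd,\te)$.

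The step I expect to be the main obstacle — or at least the one requiring the most care — is the compatibility of the carr\'e du champ with the finite-dimensional projections: one must check that restricting/conditioning to the first $n$ coordinates does not destroy the lower bound on $\det\tga$, i.e. that $\det\tga_n[U^n]$ does not collapse to $0$ on a set where $\det\tga[U]>0$ in a way that would break the limiting argument. This is handled by the monotone structure of the energy ($\tga[U]=\sum_k\gamma_k[\cdot]$, so truncating the sum and conditioning yields a monotone approximation) together with the Jacobi-type inequality for determinants of the resulting positive semidefinite matrices; these are precisely the ingredients built into the cited Bouleau--Hirsch proposition. Since the paper explicitly invokes that proposition, the proof reduces to a one-line citation once the tower hypotheses are noted, and I would present it that way: state that $(\tbd,\te)=\prod_{i=1}^{+\infty}(\bd_i,e_i)$ is the limit of the $(\tbd_n,\te_n)$, each satisfying (EID) by Lemma \ref{lemmeEID}, and conclude by Bouleau--Hirsch Chap. V Proposition 2.2.3.
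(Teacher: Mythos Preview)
Your proposal is correct and follows exactly the paper's approach: the paper's proof of Theorem~\ref{EID2} consists solely of invoking Lemma~\ref{lemmeEID} together with Chapter~V Proposition~2.2.3 of Bouleau--Hirsch, and your final paragraph lands on precisely this one-line citation. The intermediate sketch you give of the mechanism behind that proposition is extra exposition (and a bit loose in the passage to the limit with $\1_B(U^n)$), but since you correctly note that the write-up reduces to the citation, this does not affect the validity of the proof.
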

\subsection{ The case of structures obtained by injective images}
The following result could be extended to more general images (see Bouleau-Hirsch \cite{bouleau-hirsch2} Chapter V \S 1.3 p 196 {\it et seq.}). We give the statement in the most useful form  for Poisson measures and processes with independent increments.

Let $(\mathbb{R}^p\backslash\{0\},\mathcal{B}(\mathbb{R}^p\backslash\{0\}), \nu, \mathbf{d}, \gamma)$ be a Dirichlet structure on $\mathbb{R}^p\backslash\{0\}$ satisfying (EID). Thus $\nu$ is $\sigma$-finite, $\gamma$ is the carr\'e du champ operator and the Dirichlet form is $e[u]=1/2\int \gamma[u] d\nu$.

Let $U: \mathbb{R}^p\backslash\{0\}\mapsto\mathbb{R}^q\backslash\{0\}$ be an injective map such that $U\in \mathbf{d}^q$. Then $U_*\nu$ is $\sigma$-finite. If we put
$$\begin{array}{rl}
\mathbf{d}_U&=\{\varphi\in L^2(U_*\nu):\varphi\circ U\in\mathbf{d}\}\\
e_U[\varphi]&=e[\varphi\circ U]\\
\gamma_U[\varphi]&=\frac{d\;U_*(\gamma[\varphi\circ U].\nu)}{d\;U_*\nu}
\end{array}
$$ we have
\begin{Pro}
The term $(\mathbb{R}^q\backslash\{0\},\mathcal{B}(\mathbb{R}^q\backslash\{0\}), U_*\nu, \mathbf{d}_U, \gamma_U)$ is a Dirichlet structure satisfying (EID).
\end{Pro}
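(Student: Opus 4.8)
The plan is to verify that $(\mathbb{R}^q\backslash\{0\}, \mathcal{B}(\mathbb{R}^q\backslash\{0\}), U_*\nu, \mathbf{d}_U, \gamma_U)$ is a Dirichlet structure with carr\'e du champ, and then to transfer the (EID) property by unwinding the determinant of the carr\'e du champ matrix through the injective map $U$. I would first record that $U_*\nu$ is $\sigma$-finite: since $U$ is injective, a countable partition of $\mathbb{R}^p\backslash\{0\}$ into $\nu$-finite Borel sets pushes forward, through $U$ and the injectivity, to a countable family of $U_*\nu$-finite sets covering the range, and outside the range $U_*\nu$ vanishes. Next I would check that $(\mathbf{d}_U, e_U)$ is a closed, densely defined, Markovian bilinear form: closedness and the Markov property are inherited from $(\mathbf{d}, e)$ because $\varphi\mapsto\varphi\circ U$ is an isometry from $L^2(U_*\nu)$ onto a closed subspace of $L^2(\nu)$ (again using injectivity of $U$, so that $\int |\varphi|^2\, dU_*\nu = \int |\varphi\circ U|^2\, d\nu$), and this isometry intertwines $e_U$ with $e$ restricted to that subspace; the unit contraction operating on $\varphi$ corresponds to the unit contraction operating on $\varphi\circ U$. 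That $\gamma_U$ is well-defined as a Radon-Nikodym derivative follows because $U_*(\gamma[\varphi\circ U]\cdot\nu)\ll U_*\nu$ (the measure $\gamma[\varphi\circ U]\cdot\nu$ is absolutely continuous with respect to $\nu$, and pushing forward preserves this), and the functional-calculus identity $e_U[\varphi] = \frac12\int\gamma_U[\varphi]\,dU_*\nu$ together with the chain rule for $\gamma_U$ on Lipschitz functions descends directly from the corresponding properties of $\gamma$.

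The heart of the matter is (EID). Let $\varphi = (\varphi_1,\dots,\varphi_d)\in\mathbf{d}_U^{\,d}$ and set $V = \varphi\circ U = (\varphi_1\circ U,\dots,\varphi_d\circ U)\in\mathbf{d}^d$. The key algebraic identity I would establish is that, $U_*\nu$-almost everywhere,
\[
\gamma_U[\varphi] \circ \text{(identity)} = \text{the density, w.r.t. } U_*\nu, \text{ of } U_*(\gamma[V]\cdot\nu),
\]
so that for any Borel set $B\subset\mathbb{R}^d$,
\[
\int_B (\det\gamma_U[\varphi])\, dU_*\nu = \int_{\varphi^{-1}(B)} \Big(\det \tfrac{d\,U_*(\gamma[V_i,V_j]\cdot\nu)}{d\,U_*\nu}\Big)\, dU_*\nu.
\]
Here I must be slightly careful: $\gamma_U[\varphi]$ is the matrix with entries $\gamma_U[\varphi_i,\varphi_j] = \frac{d\,U_*(\gamma[V_i,V_j]\cdot\nu)}{d\,U_*\nu}$, and because all these Radon-Nikodym derivatives are taken with respect to the \emph{same} measure $U_*\nu$, the determinant of the matrix of derivatives equals (a version of) the Radon-Nikodym derivative of the "matrix-valued measure" $U_*(\gamma[V]\cdot\nu)$ evaluated pointwise — more precisely, $\det\gamma_U[\varphi](U(x))$ equals $\det\gamma[V](x)$ times the appropriate Jacobian factor only when densities exist, so the clean statement is the change-of-variables identity $\int g(U(x))\det\gamma_U[\varphi](U(x))\,\rho(x)\,\nu(dx)$-type manipulation. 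The cleanest route: observe that $\varphi = \varphi$ and $V = \varphi\circ U$ give, via injectivity of $U$, a measure-isomorphism between $(\mathbb{R}^p\backslash\{0\}, \nu)$ restricted to where things are defined and $(U(\mathbb{R}^p\backslash\{0\}), U_*\nu)$, under which $\varphi$ on the target corresponds to $V$ on the source; hence $\varphi_*(U_*\nu) = V_*\nu$ as measures on $\mathbb{R}^d$, and more generally $\varphi_*\big((\det\gamma_U[\varphi])\cdot U_*\nu\big)$ should coincide with $V_*\big((\det\gamma[V])\cdot\nu\big)$.

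I would then conclude as follows: by the bottom-space (EID), $V_*\big((\det\gamma[V])\cdot\nu\big) \ll \lambda^d$; by the identification above, $\varphi_*\big((\det\gamma_U[\varphi])\cdot U_*\nu\big) = V_*\big((\det\gamma[V])\cdot\nu\big) \ll \lambda^d$, which is precisely (EID) for the image structure. The main obstacle, and the point requiring genuine care rather than routine checking, is justifying the identity $\varphi_*\big((\det\gamma_U[\varphi])\cdot U_*\nu\big) = V_*\big((\det\gamma[V])\cdot\nu\big)$: one must show that the determinant of the matrix of Radon-Nikodym derivatives $\big(\frac{d\,U_*(\gamma[V_i,V_j]\cdot\nu)}{d\,U_*\nu}\big)_{ij}$, pulled back along $U$, agrees $\nu$-a.e. with $\det\gamma[V]$ up to the density $\frac{d(U_*\nu)\circ U}{d\nu}$ factor — and here the injectivity of $U$ is exactly what makes the pushforward $U_*$ act as an honest measure-space isomorphism on the range, so that pulling back along $U$ commutes with taking determinants and Radon-Nikodym derivatives. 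Once that measure-theoretic bookkeeping is in place (it is essentially the statement that for injective $U$, $\frac{d\,U_*(f\cdot\nu)}{d\,U_*\nu}\circ U = f$ $\nu$-a.e.), the rest is immediate. I would also note that, as the authors remark, a more general (non-injective) version exists in Bouleau-Hirsch, but injectivity lets us avoid the "sum over preimages" complications entirely.
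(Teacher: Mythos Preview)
Your approach is essentially the same as the paper's. The paper dispatches the Dirichlet-structure part in one line by citing the general image construction in Bouleau--Hirsch (which does \emph{not} require injectivity), whereas you rederive closedness and the Markov property using the isometry $\varphi\mapsto\varphi\circ U$; but for the (EID) part --- the real content --- both proofs hinge on exactly the same identity $(\gamma_U[\varphi])\circ U=\gamma[\varphi\circ U]$ $\nu$-a.e. (your formulation ``for injective $U$, $\frac{d\,U_*(f\cdot\nu)}{d\,U_*\nu}\circ U = f$ $\nu$-a.e.'') and then the same pushforward computation $\varphi_*[(\det\gamma_U[\varphi])\cdot U_*\nu]=(\varphi\circ U)_*[(\det\gamma[\varphi\circ U])\cdot\nu]$. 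Your detour through ``Jacobian factors'' before self-correcting is unnecessary --- there is no extra factor, precisely because the Radon--Nikodym derivative pulled back through an injective map recovers the original density --- but you land in the right place.
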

\begin{proof} a) That $(\mathbb{R}^q\backslash\{0\},\mathcal{B}(\mathbb{R}^q\backslash\{0\}), U_*\nu, \mathbf{d}_U, \gamma_U)$ be a Dirichlet structure is general and does not use the injectivity of $U$ (cf. the case $\nu$ finite in Bouleau-Hirsch \cite{bouleau-hirsch2} Chap. V \S 1 p. 186 {\it et seq.}).\\
b) By the injectivity of $U$, we see that for $\varphi\in \mathbf{d}_U$
$$(\gamma_U[\varphi])\circ U=\gamma[\varphi\circ U]\quad \nu{\mbox{-a.s.}}$$
so that if $f\in(\mathbf{d}_U)^r$
$$f_*[\det\gamma_U[f]\cdot U_*\nu]=(f\circ U)_*[\det\gamma[f\circ U]\cdot\nu]$$
which proves (EID) for the image structure.
\end{proof}
\begin{Rq} Applying this result yields examples of Dirichlet structures
on $\mathbb{R}^n$ satisfying (EID) whose measures are carried by a
(Lipschitzian) curve in $\mathbb{R}^n$ or, under some hypotheses,
a countable union of such curves, and therefore whithout
density.\end{Rq}
\section{Dirichlet structure on the Poisson space related to a
Dirichlet structure on the states space} Let $(X,\cX
,\nu,\bbd,\gamma)$ be a local symmetric Dirichlet structure which
admits a carr\'e du champ operator i.e. $(X,\cX ,\nu )$ is a
measured space called {\it the bottom space}, $\nu$ is $\sigma$-finite
and the bilinear form
\[ e [f,g]=\frac12\int\gamma [f,g]\, d\nu,\]
is a local  Dirichlet form with domain $\bbd\subset L^2 (\nu )$
and carr\'e du champ operator $\gamma$ (see Bouleau-Hirsch
\cite{bouleau-hirsch2}, Chap. I).
We assume that for all $x\in X$, $\{ x\}$ belongs to $\cX$ and
that $\nu$ is diffuse ($\nu(\{x\})=0\;\forall x$).
The generator associated to this Dirichlet structure is
denoted by $a$, its  domain is $\cD (a)\subset \bbd$ and it
generates
the Markovian strongly continuous semigroup $(p_t )_{t\geq 0}$ on $L^2 (\nu )$.

Our aim is to study, thanks to Dirichlet forms methods, functionals of a   Poisson measure $N$,
associated to  $(X,\cX ,\nu )$. It is defined on  the probability space $(\Om,
\cA, \bbP )$ where $\Om$ is the configuration space, the set of
measures which are countable sum of Dirac measures on $X$, $\cA$
is the sigma-field generated by $N$ and   $\bbP$ is the law of $N$
(see Neveu \cite{neveu}). The probability space $(\Om, \cA ,\bbP)$
is called the {\it upper space}.\\
\subsection{Density lemmas}
Let $(F,\cF ,\mu )$ be a probability space such that for all $x\in
F$, $\{ x\}$ belongs to $\cF$ and $\mu$ is diffuse. Let $n\in\Ne$, we denote by $x_1
,x_2 ,\cdots ,x_n$ the coordinates maps on $(F^n ,\cF^{\otimes n},
\mu^{\times n})$ and we consider the random measure
$m=\sum_{i=1}^n \varepsilon_{x_i}$.
\begin{Le} Let $\cS$ be the symmetric sub-sigma-field in
$\cF^{\otimes n} $ and $p\in [1 ,+\infty[$. Sets\break $\{ m(g_1 ) \cdots
 m(g_n) :\ g_i \in L^{\infty} (\mu)\ \forall i=1 ,\cdots , n\}$ and
$\{ e^{m(g)}:\ g \in L^{\infty} (\mu)\}$ are both total in $L^p
(F^n ,\cS ,\mu^{\times n})$ and the set $\{ e^{im(g)}:\ g \in
L^{\infty} (\mu)\}$ is total in $L^p (F^n ,\cS ,\mu^{\times n};
\bbC )$.
\end{Le}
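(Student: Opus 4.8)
The plan is to establish totality of each family by a standard monotone-class / approximation argument, exploiting the fact that the symmetric sigma-field $\cS$ is generated by the symmetric functions of $(x_1,\dots,x_n)$, equivalently by the random measure $m=\sum_{i=1}^n\varepsilon_{x_i}$. First I would observe that it suffices to treat the real case $p\in[1,+\infty[$; the complex statement follows by splitting into real and imaginary parts once one knows $\{e^{im(g)}\}$ separates enough, and in fact the three families are linked: finite linear combinations of the $e^{m(g)}$ (resp. $e^{im(g)}$) approximate, via power series, the monomials $m(g_1)\cdots m(g_k)$, and conversely. So the crux is to show that the linear span of $\{m(g_1)\cdots m(g_n): g_i\in L^\infty(\mu)\}$ is dense in $L^p(F^n,\cS,\mu^{\times n})$.

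For that, I would argue by duality when $p>1$ (and handle $p=1$ by a direct density argument, or reduce via the finite measure): let $h\in L^{p'}(F^n,\cS,\mu^{\times n})$ annihilate all products $m(g_1)\cdots m(g_n)$ with $g_i\in L^\infty(\mu)$. Expanding $m(g_1)\cdots m(g_n)=\sum_{\sigma}\prod_j g_j(x_{\sigma(j)})$ over maps $\sigma:\{1,\dots,n\}\to\{1,\dots,n\}$, and using that $h$ is symmetric, one gets that $h$ is orthogonal to all symmetrizations of tensor products $g_1\otimes\cdots\otimes g_n$. Taking the $g_i$ to run over indicators of measurable sets (which are in $L^\infty(\mu)$), and using that products of such indicators generate $\cF^{\otimes n}$, together with symmetry of $h$, forces $\int h\,\1_A\,d\mu^{\times n}=0$ for every $A$ in the symmetric sigma-field $\cS$; hence $h=0$ $\mu^{\times n}$-a.e. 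This gives totality of the monomial family. The key technical point is the combinatorial bookkeeping that the diagonal terms (where $\sigma$ is not injective) do not obstruct the argument: one handles them by an induction on $n$, noting that on the diagonal $\{x_i=x_j\}$ one is reduced to an $(n-1)$-variable problem, and $\mu$ being diffuse does not even matter here — only the generation of $\cS$ by symmetric products of indicators does.

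Next, to pass from monomials to the exponential family $\{e^{m(g)}:g\in L^\infty(\mu)\}$: for fixed $g\in L^\infty(\mu)$ with $\|g\|_\infty\leq M$, the partial sums $\sum_{k=0}^{K}\frac{1}{k!}m(g)^k$ converge to $e^{m(g)}$ in $L^p$ because $|m(g)|\leq nM$ pointwise, so the series converges uniformly; thus $e^{m(g)}$ lies in the $L^p$-closure of the span of powers $m(g)^k$, which are themselves (polarizing, or directly) in the span of the monomial family. Conversely, to see that the exponentials already span a dense set without going through monomials, one can use the classical trick: $m(tg)=\sum m(g_i)$ type expansions, differentiating $t\mapsto e^{m(tg)}$ at $t=0$ produces $m(g)$, its second derivative produces $m(g)^2$, etc., and these derivatives are $L^p$-limits of finite differences of exponentials; so the closed span of $\{e^{m(g)}\}$ contains all $m(g)^k$ and hence all products $m(g_1)\cdots m(g_n)$ by polarization. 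The same reasoning with $e^{im(g)}$ gives the complex statement, and since the real and imaginary parts $\cos(m(g))$, $\sin(m(g))$ are then available, one also recovers the real $L^p$ density from $\{e^{im(g)}\}$ if desired.

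The main obstacle I anticipate is purely combinatorial rather than analytic: correctly organizing the expansion of $m(g_1)\cdots m(g_n)$ into injective and non-injective index maps and verifying, by induction on $n$, that orthogonality to all these symmetrized tensors really does pin down $h$ on all of $\cS$. Everything else — the series convergence for the exponentials, the reduction from $p=1$, polarization from powers to products — is routine. I would therefore devote most of the written proof to the monomial case and dispatch the two exponential families in a short paragraph each.
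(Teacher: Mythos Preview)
Your duality-plus-induction approach is correct and genuinely different from the paper's. The paper exploits the diffuseness of $\mu$ directly: since $\mu$ has no atoms, products $g_1(x_1)\cdots g_n(x_n)$ with the $g_i$ having pairwise \emph{disjoint supports} are already total in $L^p(\mu^{\times n})$; symmetrizing such products gives a total family in $L^p(F^n,\cS,\mu^{\times n})$; and for disjoint supports one has the clean identity
\[
m(g_1)\cdots m(g_n)=\sum_{\sigma\in\mathbf{S}_n} g_1(x_{\sigma(1)})\cdots g_n(x_{\sigma(n)})
\]
with no diagonal (non-injective) terms whatsoever, so these symmetrizations are exactly monomials in $m$. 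This sidesteps in one stroke the combinatorial induction you set up. Your route, by contrast, does not use diffuseness --- your remark to that effect is correct, and the induction peeling off the non-injective maps level by level does go through --- but it is longer, and the bookkeeping you yourself flag as the main obstacle is precisely what the paper's disjoint-support trick eliminates. What you gain is a slightly more general statement; what the paper gains is brevity. Both arguments leave the passage to the exponential families $\{e^{m(g)}\}$ and $\{e^{im(g)}\}$ as a routine afterthought; your sketch via finite differences of $t\mapsto e^{m(tg)}$ and polarization is adequate for that step.
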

\begin{proof} Because $\mu$ is diffuse, the set $\{ g_1 (x_1 )\cdots g_n (x_n ):\ g_i \in
L^{\infty} (\mu),\,g_i$ with disjoint supports $ \forall i=1 ,\cdots , n\}$ is total in $L^p
(\mu^{\times n})$. Let $G (x_1 ,\cdots ,x_n )$ be a linear
combination of such functions. If $F(x_1 ,\cdots ,x_n )$ is
symmetric and belongs to $L^p (\mu^{\times n})$ then  the
distance in $L^p (\mu^{\times n})$ between $F(x_1 ,\cdots , x_n
)$ and $G(x_{\sigma (1)},\cdots ,x_{\sigma(n)})$ for $\sigma
\in \mathbf{S}$  the set of permutations
on $\{ 1 ,\cdots ,n\},$ does not depend on $\sigma$ and as a consequence is larger than the distance between $F(x_1 ,\cdots , x_n )$ and the barycenter
$\frac{1}{n!}\sum_{\sigma \in\mathbf S}G(x_{\sigma (1)},\cdots
,x_{\sigma(n)})$. So, the set $\{ \frac{1}{n!}\sum_{\sigma
\in\mathbf S}G(x_{\sigma (1)},\cdots ,x_{\sigma(n)}:\ g_i \in
L^{\infty} (\mu)\,,g_i$ with disjoint supports $ \forall i=1 ,\cdots , n\}$ is total in $L^p (F^n
,\cS ,\mu^{\times n})$. We conclude by using the following
property : if $f_i\ i=1 ,\cdots ,n$,  are $\cF$-measurable functions
with disjoint supports then:
$ m(f_1)\cdots m(f_n)=\sum_{\sigma\in\mathbf{S}} f_1
(x_{\sigma(1)}) \cdots f_n (x_{\sigma (n)}).$
\end{proof}
\begin{Le} Let $N_1$ be a random Poisson measure on $(F,\cF
,\mu_1 )$ where $\mu_1$, the intensity of $N_1$, is a finite and diffuse measure, defined on some
probability space $(\Om_1 ,\cA_1 ,\bbP_1 )$ where $\cA_1 =\sigma
(N_1)$. Then, for any $p\in [1 ,+\infty[$, the set $\{ e^{-N_1
(f)}:\ f\geq 0 , f\in L^{\infty} (\mu_1 )\}$ is total in $L^p
(\Om_1 ,\cA_1 ,\bbP_1 )$ and $\{e^{iN_1 (f)}:\  f\in L^{\infty}
(\mu_1 )\}$ is total in $L^p (\Om_1 ,\cA_1 ,\bbP_1 ;\bbC)$.
\end{Le}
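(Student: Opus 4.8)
The plan is to argue by duality. Since $\bbP_1$ is a probability measure, the topological dual of $L^p(\Om_1,\cA_1,\bbP_1)$ is $L^{p'}(\Om_1,\cA_1,\bbP_1)$ for $p\in[1,+\infty[$ (with the convention $p'=+\infty$ when $p=1$), and $L^{p'}(\bbP_1)\subset L^1(\bbP_1)$ because $\bbP_1$ is finite; so it is enough to prove that any $\Phi\in L^{p'}(\bbP_1)$ with $\int\Phi\,e^{-N_1(f)}\,d\bbP_1=0$ for all $f\geq 0$ in $L^\infty(\mu_1)$ vanishes $\bbP_1$-a.s. Everything below is well defined: the finiteness of $\mu_1$ gives $N_1(f)<+\infty$ $\bbP_1$-a.s. for $f\in L^\infty(\mu_1)$, one has $0\leq e^{-N_1(f)}\leq 1$, and $\rho:=\Phi\,\bbP_1$ is a finite signed (complex, in the last part) measure on $(\Om_1,\cA_1)$. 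The complex assertion will follow from exactly the same scheme with $e^{-N_1(f)}$ replaced by $e^{iN_1(f)}$.

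First I would test the hypothesis against elementary functions. For $A_1,\dots,A_k\in\cF$, $t_1,\dots,t_k\geq 0$ and $k\in\Ne$, the function $f=\sum_{j=1}^k t_j\1_{A_j}$ is non-negative and bounded, and $e^{-N_1(f)}=\prod_{j=1}^k s_j^{\,N_1(A_j)}$ with $s_j:=e^{-t_j}\in\,]0,1]$. Writing $\rho_{\mathbf A}$ for the image of $\rho$ under $\omega\mapsto(N_1(A_1),\dots,N_1(A_k))$, a finite signed measure on $\N^k$, the hypothesis becomes
\[ \sum_{m\in\N^k}\rho_{\mathbf A}(\{m\})\,s_1^{m_1}\cdots s_k^{m_k}=0\qquad\text{for all }s\in\,]0,1]^k. \]
Since $\sum_m|\rho_{\mathbf A}(\{m\})|\leq\int|\Phi|\,d\bbP_1<+\infty$, the left-hand side is an absolutely convergent power series, and a one-variable argument applied successively in $s_1,\dots,s_k$ forces $\rho_{\mathbf A}(\{m\})=0$ for every $m$; equivalently, $\int\Phi\,\1_{\{N_1(A_1)=m_1,\dots,N_1(A_k)=m_k\}}\,d\bbP_1=0$ for all such $A_i$, $m_i$, $k$.

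To conclude I would invoke a monotone class argument: the events $\{N_1(A_1)=m_1,\dots,N_1(A_k)=m_k\}$ ($A_i\in\cF$, $m_i\in\N$, $k\in\Ne$) form a $\pi$-system generating $\cA_1=\sigma(N_1)$, so the finite measure $\rho$, which vanishes on this $\pi$-system and on $\Om_1$, vanishes on all of $\cA_1$; hence $\Phi=0$ $\bbP_1$-a.s., which gives totality of $\{e^{-N_1(f)}:f\geq 0,\ f\in L^\infty(\mu_1)\}$ in $L^p(\Om_1,\cA_1,\bbP_1)$ for every $p\in[1,+\infty[$. For the complex case one takes $f=\sum_j t_j\1_{A_j}$ with $t_j\in\R$; then the vanishing of $\int\Phi\,e^{iN_1(f)}\,d\bbP_1$ for all such $f$ says that the Fourier transform of the finite complex measure $\rho_{\mathbf A}$ on $\R^k$ is identically zero, so $\rho_{\mathbf A}=0$, and the same monotone class step finishes the proof.

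The argument is essentially routine; the only place needing slight care is the step extracting $\rho_{\mathbf A}\equiv 0$ from the generating function identity. Because only $f\geq 0$ is allowed, we merely know the power series vanishes on the box $]0,1]^k$ rather than near an interior point, but this is harmless: a convergent one-variable power series that vanishes on a non-degenerate interval has all coefficients $0$, and one applies this coordinate by coordinate (fixing all but one variable). An alternative, more bookkeeping-heavy route would be to condition on the total number of atoms $N_1(F)$ and reduce shell by shell to the finite-product situation of the preceding lemma; the duality argument above seems shorter.
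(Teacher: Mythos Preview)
Your argument is correct, and it takes a genuinely different route from the paper's. The paper proceeds exactly along the lines you sketch in your final sentence: it sets $P=N_1(F)$, uses that $\{e^{i\lambda P}:\lambda\in\R\}$ is total in $L^p(\N,\bbP_P)$ to approximate $\1_{\{P=n\}}e^{iN_1(g)}$ by linear combinations $\sum_k a_k e^{iN_1(g+\lambda_k\1_F)}$, and then invokes the preceding lemma on each shell $\{P=n\}$ (where, conditionally, $N_1$ is a sum of $n$ i.i.d.\ Dirac masses with law $\mu_1/\mu_1(F)$). Your duality plus $\pi$--$\lambda$ argument is shorter and entirely self-contained: it does not call on the preceding lemma, and in fact never uses that $\mu_1$ is diffuse, so it proves a slightly more general statement. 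The paper's route, on the other hand, is more constructive (one sees how the approximation is built) and fits the narrative of reducing the Poisson case to the finite-product symmetric case already treated. One small wording point: when you apply the monotone class step you speak of ``the finite measure $\rho$''; it is a finite \emph{signed} (or complex) measure, but the argument goes through since $\{B:\rho(B)=0\}$ is still a $\lambda$-system once $\rho(\Om_1)=0$, or equivalently by comparing $\Phi^+\cdot\bbP_1$ and $\Phi^-\cdot\bbP_1$ on the $\pi$-system.
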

\begin{proof} Let us put $P=N_1 (F)$, it is an integer valued
random variable. As $\{ e^{i\lambda P}:\ \lambda\in\R\} $ is total
in $L^p (\N,\cP (\N ),\bbP_P )$ where $\bbP_P$ is the law of $P$,
for any $n\in\Ne$ and any $g\in L^{\infty }(\mu_1 )$, one can
approximate in  $L^p (\Om_1 ,\cA_1 ,\bbP_1 ;\bbC)$ the random
variable
 ${\bf 1}_{\{P=n\}}e^{iN_1 (g)}$ by a sequence of variables of the
 form $\sum_{k=1}^K a_k e^{i\lambda_k P}e^{iN_1 (g)}$ with $a_k
 ,\lambda_k \in\R$, $k=1\cdots K$. But, as a consequence of the
 previous lemma, we know that $\{{\bf 1}_{\{P=n\}}e^{iN_1 (f)}:\
 f\in L^{\infty}(\mu_1 )\}$ is total in $L^p
 (\{P=n\},\cA_1|_{\{P=n\}}, \bbP_1|_{\{P=n\}}; \bbC )$, which
 provides the result.\end{proof}
 We now give the main lemma, with the notation introduced at the
 beginning of this section.
 \begin{Le}{\label{lemmeDensite}} For $p\in[1,\infty[$, the set $\{ e^{-N(f)}:\ f\geq 0 , f\in L^1 (\nu )\cap
 L^{\infty} (\nu)\} $ is total in  $L^p (\Om ,\cA ,\bbP )$ and $\{
 e^{iN(f)}:\ f  \in L^1 (\nu )\cap L^{\infty} (\nu)\} $
 is total in $L^p (\Om ,\cA ,\bbP ; \bbC )$.
 \end{Le}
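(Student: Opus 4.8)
The plan is to reduce the $\sigma$-finite situation to the finite-intensity density lemma just established, using an exhaustion of $X$ together with a martingale-convergence argument.

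First I would fix an increasing sequence $(A_n)_{n\in\Ne}$ in $\cX$ with $\union_n A_n=X$ and $\nu(A_n)<\infty$, and set $N_n:=N|_{A_n}$, a Poisson random measure on $(A_n,\cX|_{A_n},\nu|_{A_n})$ with finite diffuse intensity. Since $A_n\uparrow X$, the restrictions satisfy $\sigma(N_n)\subset\sigma(N_{n+1})$, and for every $B\in\cX$ one has $N(B)=\lim_n N_n(B\cap A_n)$ by continuity of measures; hence $\cA=\sigma(N)=\bigvee_n\sigma(N_n)$. It follows that for $p\in[1,\infty[$ and $Y\in L^p(\Om,\cA,\bbP)$ the martingale $(\bbE[Y\mid\sigma(N_n)])_n$ converges to $Y$ in $L^p$, so $\union_n L^p(\Om,\sigma(N_n),\bbP)$ is dense in $L^p(\Om,\cA,\bbP)$, and likewise in the complex case. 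It therefore suffices to approximate an arbitrary element of $L^p(\Om,\sigma(N_n),\bbP)$, for each fixed $n$.

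Next, for fixed $n$ I would apply the preceding density lemma to $N_n$, whose intensity $\nu|_{A_n}$ is finite and diffuse: the sets $\{e^{-N_n(f)}:f\geq0,\ f\in L^\infty(\nu|_{A_n})\}$ and $\{e^{iN_n(f)}:f\in L^\infty(\nu|_{A_n})\}$ are total in $L^p(\Om,\sigma(N_n),\bbP)$, resp. in the complex $L^p$. Extending such an $f$ by $0$ off $A_n$ gives $N_n(f)=N(f\1_{A_n})$, and because $\nu(A_n)<\infty$ and $f$ is bounded, $f\1_{A_n}\in L^1(\nu)\cap L^\infty(\nu)$ and stays nonnegative. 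Thus every approximant is a finite linear combination of exponentials of the form required in the statement; combining this with the previous paragraph — choosing first $n$ and $Y_n\in L^p(\Om,\sigma(N_n),\bbP)$ close to $Y$, then approximating $Y_n$ — yields the claimed totality of $\{e^{-N(f)}\}$ in $L^p(\Om,\cA,\bbP)$, and identically that of $\{e^{iN(f)}\}$ in the complex space.

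The only delicate point is the reduction step: checking that $\sigma(N_n)$ increases to $\cA$ and that $\bigcup_n L^p(\Om,\sigma(N_n),\bbP)$ is dense, i.e. the martingale convergence theorem applied to the filtration $(\sigma(N_n))_n$ — valid for every $p\in[1,\infty[$ since the closed martingale $\bbE[Y\mid\sigma(N_n)]$ is uniformly integrable when $p=1$ and $L^p$-bounded (hence Doob-dominated) when $p>1$. Everything else is a direct transcription of the finite-intensity lemma to $N|_{A_n}$.
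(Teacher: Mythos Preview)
Your proof is correct and follows essentially the same route as the paper: exhaust $X$ by finite-measure sets, reduce to the finite-intensity lemma on each piece, and use that functionals measurable with respect to the restricted Poisson measures are dense in $L^p$. The paper phrases this with a partition $(F_k)$ and independent restrictions $N_k$, asserting without detail that ``any variable in $L^p$ is the limit of variables which depend only on a finite number of $N_k$''; your martingale-convergence argument on the increasing filtration $(\sigma(N|_{A_n}))_n$ is exactly a clean justification of that claim.
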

 \begin{proof} Assume that $\nu $ is non finite. Let $(F_k
 )_{k\in\N}$ be a partition of $\Om$ such that for all $k$, $\nu
 (F_k )$ be finite. By restriction of $N$ to each set $F_k$, we
 construct a sequence of independent Poisson measures $(N_k )$ such
 that $N=\sum_k N_k$. As any variable in $L^p$ is the limit of
 variables which depend only on a finite number of $N_k$, we conclude
 thanks to the previous lemma.
 \end{proof}
 \subsection{Construction using the Friedrichs' argument}
\subsubsection{3.2.1. Basic formulas and pre-generator}
We set $\tN =N-\nu$ then the identity
$ \E[(\tN (f))^2 ] =\int f^2 \, d\nu,$
for $f\in L^1 (\nu )\cap L^2 (\nu)$ can be extended uniquely to
$f\in L^2 (\nu )$ and this permits to define $\tN (f)$ for $f\in L^2
(\nu )$. The Laplace characteristic functional \begin{equation}\label{211}
\E [e^{i\tN (f)}]=e^{-\int (1-e^{i f}+if)\,d\nu}\ \ \ f\in L^2
(\nu)\end{equation}  yields:
\begin{Pro}{\label{pro4}} For all $f\in\bbd$ and all $h\in\cD (a)$,
\begin{equation}\label{212}
\E \left[ e^{i\tN (f)}\left( \tN (a[h])+\frac{i}2 N(\gamma
[f,h])\right)\right]=0.
\end{equation}\end{Pro}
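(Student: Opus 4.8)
The plan is to differentiate the Laplace-type identity \eqref{211} with respect to a real parameter $t$ along a one-parameter deformation of $f$ by means of the semigroup generated by $a$. Concretely, fix $f\in\bbd$ and $h\in\cD(a)$ and set $f_t=f+t h$. Since \eqref{211} extends the characteristic functional to all $f\in L^2(\nu)$, and $f_t\in L^2(\nu)$ for every $t$, we have
$$
\E[e^{i\tN(f_t)}]=e^{-\int(1-e^{if_t}+if_t)\,d\nu},\qquad t\in\R .
$$
First I would check that $t\mapsto \E[e^{i\tN(f_t)}]$ is differentiable at $t=0$ and compute the derivative on the left-hand side, namely $\E[e^{i\tN(f)}\,i\tN(h)]$; this is justified by dominated convergence once one controls $\tN(f_t)-\tN(f)=t\tN(h)$ in $L^2(\bbP)$ using the isometry $\E[(\tN(g))^2]=\int g^2\,d\nu$. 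Differentiating the right-hand side gives $-\big(\int(i h\,e^{if})\,d\nu-\int i h\,d\nu\big)$ times $e^{-\int(1-e^{if}+if)\,d\nu}=\E[e^{i\tN(f)}]$, i.e. altogether
$$
\E\big[e^{i\tN(f)}\,\tN(h)\big]=\Big(\int(e^{if}-1)\,h\,d\nu\Big)\,\E\big[e^{i\tN(f)}\big].
$$

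Next I would replace $h$ by $a[h]$ in this last identity and then feed in the structural property of the Dirichlet form on the bottom space. The key point is that for $f\in\bbd$ and $h\in\cD(a)$ one has the integration-by-parts relation
$$
\int (e^{if}-1)\,a[h]\,d\nu=-e[e^{if}-1,\,h]=-\tfrac12\int\gamma[e^{if}-1,h]\,d\nu
=-\tfrac{i}{2}\int e^{if}\,\gamma[f,h]\,d\nu,
$$
where the first equality is the definition of the generator $a$ together with $e^{if}-1\in\bbd$ (which follows from the chain rule for $\gamma$, since $f\in\bbd$ and $x\mapsto e^{ix}-1$ is Lipschitz with bounded derivative, hence $\gamma[e^{if}-1]=|e^{if}|^2\gamma[f]=\gamma[f]$ is integrable), and the last equality uses the chain rule $\gamma[e^{if}-1,h]=ie^{if}\gamma[f,h]$. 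Combining this with the displayed formula for $\E[e^{i\tN(f)}\tN(a[h])]$ yields
$$
\E\big[e^{i\tN(f)}\,\tN(a[h])\big]=-\tfrac{i}{2}\Big(\int e^{if}\,\gamma[f,h]\,d\nu\Big)\E\big[e^{i\tN(f)}\big].
$$

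Finally I would identify the right-hand side as a term involving $N(\gamma[f,h])$. For this use the elementary formula $\E[e^{i\tN(f)}N(g)]=\big(\int e^{if}g\,d\nu\big)\E[e^{i\tN(f)}]$, valid for $g\in L^1(\nu)$ (differentiate \eqref{211} with $f$ replaced by $f+\epsilon$ applied against $g\cdot\nu$, or expand directly using the Lévy–Khintchine structure); applying it with $g=\gamma[f,h]\in L^1(\nu)$ gives
$$
\E\big[e^{i\tN(f)}\,N(\gamma[f,h])\big]=\Big(\int e^{if}\,\gamma[f,h]\,d\nu\Big)\,\E\big[e^{i\tN(f)}\big],
$$
and substituting this into the previous line produces exactly \eqref{212}.

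The main obstacle, and the step deserving the most care, is the justification of termwise differentiation in $t$ and the verification that all the integrability hypotheses needed for the integration-by-parts step are met: that $e^{if}-1\in\bbd$ with $\gamma[e^{if}-1,h]\in L^1(\nu)$, that $\gamma[f,h]\in L^1(\nu)$ (by Cauchy–Schwarz for $\gamma$, since $\gamma[f]\in L^1(\nu)$ and $\gamma[h]\in L^1(\nu)$), and that the dominating functions for the dominated-convergence arguments are integrable with respect to $\bbP$. One technical wrinkle is that $h$ and $a[h]$ need not lie in $L^1(\nu)$, only in $L^2(\nu)$, so $\tN(h)$ and $\tN(a[h])$ should be understood via the $L^2$-extension of $\tN$; all the manipulations above are legitimate at that level because the $L^2(\bbP)$ norms are controlled by the $L^2(\nu)$ norms through the isometry.
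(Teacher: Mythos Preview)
Your proof is correct and follows essentially the same route as the paper: differentiate the Laplace functional \eqref{211} in a linear parameter to obtain $\E[e^{i\tN(f)}\tN(g)]=(\int(e^{if}-1)g\,d\nu)\,\E[e^{i\tN(f)}]$, specialize to $g=a[h]$, rewrite $\int(e^{if}-1)a[h]\,d\nu$ via the chain rule for $\gamma$ as $-\tfrac{i}{2}\int e^{if}\gamma[f,h]\,d\nu$, and finally reinterpret this last integral through the same differentiation identity applied with $g=\gamma[f,h]$ (together with $N=\tN+\nu$). The only cosmetic differences are that the paper differentiates directly along $f+ta[h]$ and keeps the normalizing constant $e^{\int(1-e^{if}+if)d\nu}$ on the left rather than writing $\E[e^{i\tN(f)}]$ on the right.
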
\begin{proof}
Derivating  in $0$ the map $t\rightarrow \E \left[ e^{i\tN
(f+ta[h])}\right] $, we have thanks to (\ref{211}),
\begin{equation}\label{213}
\E [e^{i\tN (f)+\int(1-e^{if}+if)d\nu}\tN (a[h])]=\int
(e^{if}-1)a[h]\, d\nu,
\end{equation}
then using the fact that  function $x\mapsto e^{ix}-1$ is Lipschitz
and vanishes in $0$ and the functional calculus related to a local
Dirichlet form (see Bouleau-Hirsch \cite{bouleau-hirsch2} Section I.6) we get
that the member on the right hand side in (\ref{213}) is equal to
\[ -\frac12 \int \gamma [e^{if}-1,h]\, d\nu =-\frac{i}2 \int
e^{if}\gamma[f,h] \, d\nu.\] We conclude by applying once more
(\ref{213}) with $\gamma [f,h]$ instead of $a[h]$.
\end{proof}

 The linear combinations of variables of the form $e^{i\tN (f)}$ with
$f\in \cD (a)\bigcap L^1 (\nu )$ are dense in
$L^2 (\Om
,\cA ,\bbP ;\C )$ thanks to Lemma 8. This is a natural choice for test functions, but, for technical reason, we need in addition that $\gamma[f]$ belongs to $L^2(\nu)$. So we suppose :

\noindent\underline{Bottom core hypothesis} (BC). The bottom structure is such that there exists a subspace $H$ of $\cD (a)\bigcap L^1 (\nu )$ such that $\forall f\in H,\;\gamma[f]\in L^2(\nu)$, and the space  $\cD_0$ of
linear combinations of  $e^{i\tN (f)},\; f\in H,$ is dense in $L^2 (\Om
,\cA ,\bbP ;\C )$.

 This hypothesis will be fulfilled in all cases on $\mathbb{R}^r$ where $ \cD (a)$ contains the $\mathcal{C}^\infty$ functions with compact support and $\gamma$ operates on them.\\
 
  If $U=\sum_p \lambda_p e^{i\tN (f_p)}$ belongs to $\cD_0$, we put
\begin{equation}{\label{214}}
A_0 [U]=\sum_p \lambda_p e^{i\tN (f_p)}(i\tN (a[f_p ])-\frac12 N
(\gamma [f_p])).\end{equation} This is a natural choice as
candidate for the pregenerator of the upper structure, since, as
easily seen using (\ref{214}), it induces the relation
$\Gamma[N(f)]=N(\gamma[f])$ between the carr\'e du champ operators
of the upper and the bottom structures,  which is satisfied in the
case $\nu(X)<\infty$.

One has to note that for the moment,
$A_0$ is not uniquely determined since {\it a priori} $A_0 [U]$
depends on the expression of $U$ which is possibly non unique.
\begin{Pro} \label{pro5}Let $U,V \in\cD_0$, $U=\sum_p \lambda_p e^{i\tN (f_p)}$
and $V=\sum_q \mu_q e^{i\tN (g_q)}$. One has
\begin{equation}{\label{215}}
-\E [A_0 [U]\overline{V}]=\frac12 \E \left[\sum_{p,q}\lambda_p\overline{\mu_q} e^{i\tN
(f_p -g_q )} N(\gamma [f_p ,g_q])\right]
\end{equation}
which is also equal to
\begin{equation}\label{216}
\frac12 \E [\sum_{p,q} F'_p \overline{G'_q} N(\gamma [f_p ,g_q])],
\end{equation}
where $F$ and $G$ are such that $U=F(\tN (f_1 ),\cdots ,\tN (f_n ))$
and $V=G(\tN (g_1 ),\cdots ,\tN (g_m ))$ and $F'_p =\frac{\partial
F}{\partial x_p} (\tN (f_1 ),\cdots ,\tN (f_n )$, $G'_q
=\frac{\partial G}{\partial x_q}(\tN (g_1 ),\cdots ,\tN (g_m ))$.
\end{Pro}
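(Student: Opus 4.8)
The statement (\ref{215})--(\ref{216}) is an identity about the bilinear form $-\E[A_0[U]\overline V]$, so the plan is to compute $\E[A_0[U]\overline V]$ directly from the definition (\ref{214}) and reduce it, term by term over the double sum, to an application of Proposition \ref{pro4}. Writing $A_0[U]=\sum_p\lambda_p e^{i\tN(f_p)}(i\tN(a[f_p])-\tfrac12 N(\gamma[f_p]))$ and $\overline V=\sum_q\overline{\mu_q}e^{-i\tN(g_q)}$, multiplying out gives
\[
-\E[A_0[U]\overline V]=\sum_{p,q}\lambda_p\overline{\mu_q}\,\E\!\left[e^{i\tN(f_p-g_q)}\Bigl(-i\tN(a[f_p])+\tfrac12 N(\gamma[f_p])\Bigr)\right],
\]
using $e^{i\tN(f_p)}e^{-i\tN(g_q)}=e^{i\tN(f_p-g_q)}$ (linearity of $f\mapsto\tN(f)$ on $L^2(\nu)$). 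Now I would like to recognise the bracketed expectation as a consequence of (\ref{212}) applied with the test function $f:=f_p-g_q\in H$ (hence in $\bbd$) and $h:=f_p\in H\subset\cD(a)$: Proposition \ref{pro4} gives
\[
\E\!\left[e^{i\tN(f_p-g_q)}\Bigl(\tN(a[f_p])+\tfrac{i}{2}N(\gamma[f_p-g_q,f_p])\Bigr)\right]=0,
\]
so $\E[e^{i\tN(f_p-g_q)}\tN(a[f_p])]=-\tfrac{i}{2}\E[e^{i\tN(f_p-g_q)}N(\gamma[f_p-g_q,f_p])]$. Substituting, the $p,q$ term becomes $\lambda_p\overline{\mu_q}\cdot\tfrac12\E[e^{i\tN(f_p-g_q)}N(\gamma[f_p-g_q,f_p])] + \lambda_p\overline{\mu_q}\cdot\tfrac12\E[e^{i\tN(f_p-g_q)}N(\gamma[f_p])]$, and by bilinearity of $\gamma$, $\gamma[f_p-g_q,f_p]+\gamma[f_p]=2\gamma[f_p,f_p]-\gamma[g_q,f_p]$, which is not quite $-\gamma[f_p,g_q]$; so the natural fix is to symmetrise. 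Instead I would apply Proposition \ref{pro4} a second time, now with $h:=g_q$, noting that $\E[e^{i\tN(f_p-g_q)}\tN(a[g_q])]$ can be handled analogously, or — cleaner — observe that replacing $h=f_p$ by $h=\tfrac12(f_p+g_q)$ is not allowed since $A_0$ is tied to the specific expansion; the honest route is to use that $-\E[A_0[U]\overline V]$ must by the same computation equal $-\overline{\E[A_0[V]\overline U]}$, average the two expressions, and let the terms $\gamma[f_p]$, $\gamma[g_q]$ cancel against the cross terms, leaving exactly $\tfrac12\E[\sum_{p,q}\lambda_p\overline{\mu_q}e^{i\tN(f_p-g_q)}N(\gamma[f_p,g_q])]$. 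This is (\ref{215}).

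For the equality of (\ref{215}) and (\ref{216}) I would argue purely formally: given the representations $U=F(\tN(f_1),\dots,\tN(f_n))$ and $V=G(\tN(g_1),\dots,\tN(g_m))$ coming from the exponential expansions (so $F(x_1,\dots,x_n)=\sum_p\lambda_p e^{ix_p}$ after possibly relabelling, i.e. $F'_p=i\lambda_p e^{i x_p}$ evaluated at $x_k=\tN(f_k)$, and similarly $\overline{G'_q}=-i\overline{\mu_q}e^{-i\tN(g_q)}$), one has $F'_p\overline{G'_q}=\lambda_p\overline{\mu_q}e^{i\tN(f_p)}e^{-i\tN(g_q)}=\lambda_p\overline{\mu_q}e^{i\tN(f_p-g_q)}$, so the summand of (\ref{216}) literally matches the summand of (\ref{215}). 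One should remark that (\ref{216}) is written in a form that does not manifestly depend on the chosen expansion, which is the point: it foreshadows that $A_0$ is in fact well defined, to be exploited right after.

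\textbf{Main obstacle.} The only genuinely delicate point is the sign/symmetry bookkeeping in the first paragraph: a single application of Proposition \ref{pro4} with $h=f_p$ produces $\gamma[f_p-g_q,f_p]$, not the symmetric $\gamma[f_p,g_q]$ that appears in the claim, so one must either invoke the symmetry $-\E[A_0[U]\overline V]=\overline{-\E[A_0[V]\overline U]}$ (which itself requires the same computation with the roles swapped and uses that $\gamma$ and $N$ are real, $\tN(f)$ real, and that $\E[e^{i\tN(f_p-g_q)}N(\gamma[\cdot,\cdot])]$ is treated as a complex number whose conjugate flips the exponent) and average, or carefully run (\ref{212}) twice. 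I expect the integrability bookkeeping to be routine: $f_p,g_q\in H$ guarantees $f_p-g_q\in\bbd\cap L^1(\nu)$ with $\gamma[f_p-g_q]\in L^2(\nu)$ by hypothesis (BC), so $N(\gamma[f_p,g_q])\in L^1(\bbP)$ by polarisation and $\E[|\tN(a[f_p])|^2]=\int a[f_p]^2\,d\nu<\infty$ since $f_p\in\cD(a)$, which is what is needed to multiply by the bounded factor $e^{i\tN(f_p-g_q)}$ and take expectations.
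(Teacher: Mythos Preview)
Your plan is exactly the paper's approach --- expand $-\E[A_0[U]\overline V]$ from the definition, then apply Proposition~\ref{pro4} with $f=f_p-g_q$ and $h=f_p$ --- but you trip on a sign and then invent an unnecessary symmetrisation detour.

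The slip is here: from $\E[e^{i\tN(f_p-g_q)}\tN(a[f_p])]=-\tfrac{i}{2}\E[e^{i\tN(f_p-g_q)}N(\gamma[f_p-g_q,f_p])]$ you multiply by $-i$ and should get
\[
-i\cdot\Bigl(-\tfrac{i}{2}\Bigr)=-\tfrac12,
\]
not $+\tfrac12$. With the correct sign the $(p,q)$ term is
\[
-\tfrac12\,\E\bigl[e^{i\tN(f_p-g_q)}N(\gamma[f_p-g_q,f_p])\bigr]+\tfrac12\,\E\bigl[e^{i\tN(f_p-g_q)}N(\gamma[f_p])\bigr],
\]
and by bilinearity $\gamma[f_p-g_q,f_p]=\gamma[f_p]-\gamma[f_p,g_q]$, so the two $\gamma[f_p]$ contributions cancel and you are left with exactly $\tfrac12\,\E[e^{i\tN(f_p-g_q)}N(\gamma[f_p,g_q])]$. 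This is precisely what the paper does, in one line, with a single use of Proposition~\ref{pro4}; no second application and no symmetrisation are needed.

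Your proposed fix via averaging $-\E[A_0[U]\overline V]$ with $-\overline{\E[A_0[V]\overline U]}$ is both unnecessary and, as stated, circular: at this point $A_0$ is not yet known to be a well-defined operator (that is exactly what this proposition is setting up), so you cannot invoke its symmetry. You could instead run the same raw computation with the roles of $U$ and $V$ swapped and compare, but with your sign error the average still does not collapse to $\gamma[f_p,g_q]$. Once the sign is corrected, the detour evaporates.

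Your argument for the equality of (\ref{215}) and (\ref{216}) is fine and matches the paper's intent.
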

\begin{proof} We have
\[ -\E [A_0 [U]\overline{V}]=-\E \left[ \sum_{p,q}\lambda_p\overline{\mu_q} e^{i\tN (f_p -g_q)}(i\tN (a[f_p ])-\frac12 N
(\gamma [f_p]))\right].\] Thanks to Proposition \ref{pro4},
\begin{eqnarray*}
-\E \left[ \sum_{p,q}\lambda_p\overline{\mu_q} e^{i\tN (f_p -g_q)}i\tN (a[f_p
])\right]=-\frac12\E \left[ \sum_{p,q}\lambda_p\overline{\mu_q} e^{i\tN (f_p -g_q)}N(\gamma
[f_p, f_p -g_q])\right]\\
=\frac12\E \left[ \sum_{p,q}\lambda_p\overline{\mu_q} e^{i\tN (f_p -g_q)}N(\gamma [f_p,
g_q ])\right]-\frac12\E \left[ \sum_{p,q}\lambda_p\overline{\mu_q} e^{i\tN (f_p
-g_q)}N(\gamma [f_p])\right]\end{eqnarray*} which gives the statement.\end{proof}
It remains to prove that $A_0$ is uniquely determined and so is an
operator acting on $\cD_0$. To this end, thanks to the previous
proposition, we just have to prove that the quantity $\sum_{p,q}
F'_p \overline{G'_q} N(\gamma [f_p ,g_q])$ does not depend on the choice of
representations for $U$ and $V$. In the same spirit as Ma-R\"{o}ckner (see \cite{ma-rockner2}), the introduction of a gradient will yield  this non-dependence. Let us mention that the gradient we
introduce is different from the one considered by these authors and is
based on a notion  that we present now.
\subsubsection{3.2.2. Particle-wise product of a Poisson measure and a probability}
We are still considering $N$ the random Poisson measure on $(X,\cX
,\nu)$ and we are given an auxiliary probability space $(R,\cR
,\rho)$. We construct a random Poisson measure $N\odot\rho$ on
$(X\times R ,\cX\otimes \cR ,\nu\times \rho )$ such that if
$N=\sum_i \varepsilon_{x_i}$ then $N\odot\rho =\sum_i
\varepsilon_{(x_i ,r_i )}$ where $(r_i )$ is a sequence of i.i.d.
random variables independent of $N$ whose common law is
$\rho$. Such a random Poisson measure $N\odot\rho$ is sometimes called a {\it marked} Poisson measure. \\
The construction of $N\odot\rho$ follows line by line the one of
$N$. Let us recall it. We first study the case where $\nu$ is
finite and we consider the probability space
\[ (\N , \cP (\N ), P_{\nu (X)})\times (X,\cX ,\frac{\nu}{\nu
(X)})^{\Ne},\] where $P_{\nu (X)}$ denotes the Poisson law with
intensity $\nu (X)$ and we put
\[ N=\sum_{i=1}^Y \varepsilon_{x_i},\qquad(\mbox{with the convention}\; \sum_1^0=0)\]
where $Y, x_1,\cdots ,x_n ,\cdots $ denote the coordinates maps.
We introduce the probability space
\[ (\hOm,\hcA,\hbbP )=(R,\cR ,\rho )^{\Ne},\]
and the coordinates are denoted by $r_1 ,\cdots ,r_n ,\cdots$. On
the probability space $(\N , \cP (\N ), P_{\nu (X)})\times (X,\cX
,\frac{\nu}{\nu (X)})^{\Ne}\times (\hOm,\hcA,\hbbP )$, we define
the random measure $N\odot\rho =\sum_{i=1}^Y \varepsilon_{(x_i
,r_i )}$. It is a Poisson random measure on $X\times R$ with
intensity measure $\nu\times \rho$. For $f\in L^1 (\nu\times \rho)$
\begin{equation}\label{221}\hE [\int_{X\times R} fdN\odot\rho]=\int_X (\int_R f(x,r)
d\rho(r) )N(dx)\ \ \bbP-a.e.\end{equation} and if $f\in L^2
(\nu\times \rho)$ \begin{equation}\label{222}\hE [(\int_{X\times
R} fdN\odot\rho)^2 ]=(\int_X \int_R f d\rho dN)^2 -\int_X (\int_R
fd\rho)^2 dN +\int_X \int_R f^2 d\rho dN,\end{equation} where $\hE$ stands for the expectation
under the probability $\hbbP$.

If $\nu$ is $\sigma$-finite, we extend this construction by a
standard product argument. Eventually in all cases, we have
constructed $N$ on $(\Om, \cA ,\bbP)$ and $N\odot\rho$ on $(\Om,
\cA ,\bbP)\times (\hOm, \hcA ,\hbbP)$, it is a random Poisson
measure on $X\times R$ with intensity measure
$\nu\times\rho$.\\
We now are able to generalize identities (\ref{221}) and
(\ref{222}): \begin{Pro}{\label{pro6}} Let $F$ be an
$\cA\otimes\cX\otimes\cR$ measurable function such that
$\E\int_{X\times R} F^2 \, d\nu d\rho$ and $\E \int_R (\int_X
|F|d\nu)^2 d\rho$ are both finite then the following relation
holds
\begin{equation}\label{223}
\hE [(\int_{X\times R} FdN\odot\rho)^2 ]=(\int_X \int_R F d\rho
dN)^2 -\int_X (\int_R Fd\rho)^2 dN +\int_X \int_R F^2 d\rho
dN,\end{equation}\end{Pro}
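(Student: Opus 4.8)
The plan is to prove the identity \eqref{223} by approximation, using \eqref{222} as the base case and extending it to general $F$ by a monotone class / density argument. First I would verify the identity for elementary functions $F$ of the form $F(\omega,x,r)=\sum_{j=1}^k G_j(\omega)\,h_j(x,r)$ with $G_j$ bounded and $\cA$-measurable and $h_j\in L^1(\nu\times\rho)\cap L^2(\nu\times\rho)$; here the point is that, $N\odot\rho$ being a Poisson measure with intensity $\nu\times\rho$ conditionally on $\cA$ in a suitable sense — more precisely, the randomness of $N\odot\rho$ that is not already in $N$ comes only from the marks $(r_i)$, which are i.i.d.\ with law $\rho$ and independent of $N$ — the quadratic computation reduces to a finite sum of instances of \eqref{222} with the functions $h_j$, together with the obvious bilinear polarization. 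Alternatively, and perhaps more cleanly, one can simply condition on $N=\sum_i\varepsilon_{x_i}$: under $\hbbP$ the variable $\int_{X\times R}F\,dN\odot\rho=\sum_i F(\cdot,x_i,r_i)$ is a sum of \emph{independent} terms (the $r_i$ being i.i.d.\ and independent), so that its conditional variance is $\sum_i \mathrm{Var}_\rho(F(\cdot,x_i,\cdot))=\int_X\int_R F^2\,d\rho\,dN-\int_X(\int_R F\,d\rho)^2\,dN$ and its conditional mean is $\int_X\int_R F\,d\rho\,dN$; squaring the mean and adding the variance gives exactly the right-hand side of \eqref{223}. This conditional argument is in fact the conceptual heart of the proposition.

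Next I would pass from elementary $F$ to the general $F$ satisfying the two stated integrability hypotheses. The natural topology is that of the Hilbert space $L^2(\bbP;L^2(\hbbP))$ together with control of the $L^1$-in-$x$ norm: if $F_n\to F$ in the sense that $\E\int_{X\times R}(F_n-F)^2\,d\nu\,d\rho\to0$ and $\E\int_R(\int_X|F_n-F|\,d\nu)^2\,d\rho\to0$, then each of the three terms on the right-hand side of \eqref{223} converges (the first term using \eqref{221} applied to $|F_n-F|$, the second and third using \eqref{222}-type $L^2$ bounds), and the left-hand side converges because $\hE[(\int F_n\,dN\odot\rho)^2]$ is, after taking $\bbP$-expectation, controlled by the same two quantities via \eqref{222} itself. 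One then checks that elementary functions of the above form are dense for this pair of seminorms among all $F$ satisfying the hypotheses — this is a routine truncation-and-simple-function approximation, first in $r$, then in $x$, then in $\omega$, using that $\nu\times\rho$ is $\sigma$-finite and $\rho$ is a probability.

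The main obstacle, and the step deserving genuine care, is the interchange of the $\hbbP$-expectation with the $\bbP$-integration and the handling of the $\sigma$-finite (non-finite) case: when $\nu$ is infinite, $N(X)=\infty$ a.s., so $\int_X(\int_R F\,d\rho)\,dN$ need not be finite termwise unless one leans on the hypothesis $\E\int_R(\int_X|F|\,d\nu)^2\,d\rho<\infty$, which is exactly what guarantees $\int_X\int_R|F|\,d\rho\,dN<\infty$ $\bbP$-a.s.\ and in $L^2(\bbP)$. One must therefore exhaust $X$ by sets $X_k$ of finite $\nu$-measure, prove \eqref{223} for $F\mathbf 1_{X_k\times R}$ using the finite-intensity construction recalled before Proposition \ref{pro6}, and then let $k\to\infty$, checking that all three right-hand terms and the left-hand term pass to the limit by dominated/monotone convergence — the domination being supplied precisely by the two finiteness assumptions in the statement. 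Once this localization is in place, the identity for $F\mathbf 1_{X_k\times R}$ is itself reduced to the elementary/conditional computation above, and assembling the pieces completes the proof.
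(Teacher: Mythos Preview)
Your proposal is correct and follows essentially the same route as the paper: approximate $F$ by elementary functions, localize via a partition of $X$ into sets of finite $\nu$-measure, and reduce everything to \eqref{222}. The paper's proof is a one-sentence sketch of exactly this scheme; you have simply filled in the details (and added the pleasant conditional-on-$N$ viewpoint, which the paper does not mention but which is a clean way to see why the right-hand side of \eqref{223} has the mean-squared-plus-variance structure).
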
 \begin{proof} Approximating first $F$
by a sequence of elementary functions and then introducing a
partition $(B_k)$ of subsets of $X$ of finite $\nu$-measure, this
identity is seen to be a consequence of (\ref{222}).
\end{proof}
We denote by $\bbP_N$ the measure $\bbP_N =\bbP(dw)N_w (dx)$ on
$(\Om\times X, \cA\otimes \cX)$. Let us remark that $\bbP_N$ and
$\bbP\times\nu$ are singular because $\nu$ is diffuse. \\
We will use the following consequence of the previous proposition :
\begin{Cor}\label{cor7}Let $F$ be an
$\cA\otimes\cX\otimes\cR$ measurable function. If $F$ belongs to
$L^2 (\Om\times X\times R ,\bbP_N\times\rho)$ and $\int
F(w,x,r)\rho (dr)=0$ for $\bbP_N$-almost all $(w,x)$, then  $\int F
dN\odot\rho$ is well-defined and belongs to $L^2 (\bbP\times\hbbP
)$, moreover
\begin{equation}\label{224}
\hE [(\int_{X\times R} FdN\odot\rho)^2 ]=\int F^2 dNd\rho \ \
\bbP\mbox{-}a.e.\end{equation}
\end{Cor}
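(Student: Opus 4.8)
The plan is to deduce Corollary \ref{cor7} from Proposition \ref{pro6} by verifying its two integrability hypotheses under the stated assumption, and then simplifying the right-hand side of \eqref{223} using the vanishing condition $\int F(w,x,r)\rho(dr)=0$ $\bbP_N$-a.e. First I would note that the hypothesis $F\in L^2(\Om\times X\times R,\bbP_N\times\rho)$ means exactly $\E\int_X\int_R F^2\,d\rho\,dN<\infty$, which is the first integrability condition in Proposition \ref{pro6} (written there as $\E\int_{X\times R}F^2\,d\nu d\rho$ but applied under $\bbP_N$, i.e. the bottom measure $\nu$ is replaced by $N_w$). For the second condition, $\E\int_R(\int_X|F|dN)^2 d\rho$: here one uses that under the vanishing hypothesis, $\int_X|F|\,dN$ is controlled; more carefully, since $\int_R F\,d\rho=0$, by Jensen/Cauchy--Schwarz in the $r$ variable we get $(\int_X|F|\,dN)$ need not directly be square-integrable in general, so I would instead argue by a truncation/partition argument: decompose $X=\bigsqcup_k B_k$ with $N(B_k)<\infty$ a.s. and first establish \eqref{224} for $F$ supported on a single $B_k$ and bounded, where all the integrals are manifestly finite, then pass to the limit. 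This mirrors the proof strategy already used for Proposition \ref{pro6} itself.

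The key simplification is the following: apply \eqref{223} and examine the three terms on the right. The term $(\int_X\int_R F\,d\rho\,dN)^2$ vanishes because $\int_R F(w,x,r)\,d\rho(r)=0$ for $\bbP_N$-almost all $(w,x)$, hence the inner integral $x\mapsto\int_R F\,d\rho$ is $N_w$-a.e. zero, so $\int_X(\int_R F\,d\rho)\,dN=0$ $\bbP$-a.e. The same reasoning kills the second term $\int_X(\int_R F\,d\rho)^2\,dN$, since the integrand $(\int_R F\,d\rho)^2$ is $N_w$-a.e. zero. What remains is exactly $\int_X\int_R F^2\,d\rho\,dN$, which is \eqref{224}. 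That $\int F\,dN\odot\rho$ is well-defined and in $L^2(\bbP\times\hbbP)$ then follows because \eqref{223} (or its truncated version) shows the $L^2(\hbbP)$-norm of the approximating stochastic integrals is Cauchy, with limiting square-norm $\int F^2\,dN\,d\rho$, which is finite $\bbP$-a.e. and has finite $\bbP$-expectation by the first hypothesis.

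The main obstacle I anticipate is the bookkeeping in the approximation argument: Proposition \ref{pro6} is stated with two explicit finiteness hypotheses, and one must check that the hypotheses of Corollary \ref{cor7} (namely $F\in L^2(\bbP_N\times\rho)$ plus the vanishing of the $\rho$-average) are strong enough, possibly only after a truncation, to invoke it. In particular, the second hypothesis of Proposition \ref{pro6}, $\E\int_R(\int_X|F|\,d\nu)^2\,d\rho<\infty$ — read under $\bbP_N$ as $\E\int_R(\int_X|F|\,dN)^2\,d\rho$ — is not obviously implied by $F\in L^2(\bbP_N\times\rho)$ alone, so the clean way is to first prove the Corollary for $F$ of the form $\1_{B_k}(x)G(w,x,r)$ with $G$ bounded (where everything is finite and the vanishing condition is preserved by replacing $G$ with $G-\int_R G\,d\rho$), establish \eqref{224} there, and then let $k\to\infty$ and remove the boundedness by monotone/dominated convergence, using \eqref{224} itself to control the $L^2(\bbP\times\hbbP)$-convergence of the approximants. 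Once the finite/bounded case is in hand, the passage to the limit is routine and the identity \eqref{224} is stable under it.
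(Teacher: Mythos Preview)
Your approach is essentially the same as the paper's, which simply says ``If $F$ satisfies hypotheses of Proposition \ref{pro6} then the result is clear. The general case is obtained by approximation.'' You have correctly filled in both halves: the simplification of \eqref{223} via the vanishing of $\int_R F\,d\rho$, and the truncation argument to reach functions covered by Proposition \ref{pro6}. One minor correction: the first hypothesis of Proposition \ref{pro6} really is $\E\int F^2\,d\nu\,d\rho<\infty$ (with $\nu$, not $N$), which is \emph{not} the same as $F\in L^2(\bbP_N\times\rho)$ for $w$-dependent $F$; but since you pass to bounded $F$ supported on a set of finite $\nu$-measure anyway, both hypotheses of Proposition \ref{pro6} are trivially met there, and your argument goes through unchanged.
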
\begin{proof}
If $F$ satisfies hypotheses of Proposition \ref{pro6} then the
result is clear. The general case is obtained by approximation.
\end{proof}
\subsubsection{3.2.3. Gradient and welldefinedness}
From now on, we assume that the Hilbert space $\bbd$ is separable
so that (see Bouleau-Hirsch \cite{bouleau-hirsch2}, ex.5.9 p. 242) the bottom
Dirichlet structure admits a gradient operator in the sense that
there exist a separable Hilbert space $H$ and a continuous linear
map $D$ from $\bbd$ into $L^2 (X,\nu;H)$ such that
\begin{itemize}
\item $\forall u\in \bbd$, $\| D[u ]\|^2_H =\gamma[u]$. \item If
$F:\R\rightarrow \R$ is Lipschitz  then
\[\forall u\in\bbd,\ D[F\circ u]=(F'\circ u )Du.\]
\item If $F$ is $\mathcal{C}^1$ (continuously differentiable) and Lipschitz from $\R^d$ into
$\R$ (with $d\in\N$) then
\[ \forall u=(u_1 ,\cdots ,u_d) \in \bbd^d ,\ D[F\circ
u]=\sum_{i=1}^d (F'_i \circ u ) D[u_i ].\]
\end{itemize}
As only the Hilbertian structure plays a role, we can choose for
$H$
 the  space $L^2 (R,\cR ,\rho)$ where $(R,\cR ,\rho)$ is a
 probability space such that the dimension of the vector space $L^2 (R,\cR
 ,\rho)$ is infinite. As usual, we identify $L^2 (\nu ;H)$ and
 $L^2 (X\times R ,\cX \otimes \cR ,\nu\times\rho)$ and we denote
 the gradient $D$ by  $\flat$:
 \[\forall u\in\bbd,\ Du=u^{\flat} \in L^2 (X\times R ,\cX \otimes \cR
 ,\nu\times\rho).\]
 Without loss of generality, we assume moreover that operator
 $\flat$ takes its values in the orthogonal space of $1$ in $L^2
 (R,\cR ,\rho)$, in other words we take for $H$ the orthogonal of
 $1$. So that we have
 \begin{equation}\label{231}
 \forall u\in\bbd ,\ \int u^{\flat}d\rho =0\ \ \nu\mbox{-}a.e.
 \end{equation}
 Let us emphasize that hypothesis (\ref{231}) although restriction-free, is a key property here (as in many applications to error calculus cf \cite{bouleau3} Chap. V p225 et seq.) Thanks to Corollary \ref{cor7}, it is the feature which will avoid non-local finite difference calculation on the upper space.
Finally,  although not necessary, we assume for simplicity
that constants belong to $\bbd_{loc}$ (see Bouleau-Hirsch
\cite{bouleau-hirsch2} Chap. I Definition 7.1.3.) 
\begin{equation}\label{232}
1\in \bbd_{loc} \makebox{ which implies }\ \gamma [1]=0 \makebox{
and  } 1^{\flat}=0.
\end{equation}
We now introduce the creation and annihilation operators $\crea$ and $\anni$ well-known in quantum
mechanics (see Meyer \cite{meyer},  Nualart-Vives \cite{nualart-vives}, Picard \cite{picard} etc.)  in the following way:
\[\begin{array}{l} \forall x,w\in\Om,\ \crea_x (w)=w{\bf 1}_{\{ x\in supp
\, w\}}+(w+\varepsilon_x) {\bf 1}_{\{ x\notin supp \, w\}}\\
\forall x,w\in\Omega,\ \anni_x(w)=w{\bf 1}_{\{ x\notin supp
\, w\}}+(w-\varepsilon_x) {\bf 1}_{\{ x\in supp \, w\}}.
\end{array}\] 
One can verify that for all $w\in\Om$,
\begin{equation}\label{233}
\crea_x (w)=w\makebox{ and }\anni_x(w)=w-\varepsilon_x \makebox{ for } N_w \makebox{-almost all }x
\end{equation}
and
\begin{equation}\label{234}
\crea_x (w)=w+\varepsilon_x\makebox{ and }\anni_x(w)=w  \makebox{ for } \nu \makebox{-almost
all }x
\end{equation}
We extend this operator to the functionals by setting:
\[ \crea H(w,x)=H(\crea_x w, x)\quad\makebox{ and }\quad\anni H(w,x)=H(\anni_xw,x).\]
The next lemma shows that the image of $\bbP\times\nu$ by
$\crea$ is nothing but $\bbP_N$ whose image by $\anni$ is $\bbP\times\nu$ :
\begin{Le}{\label{lem8}} Let $H$ be $\cA\otimes\cX$-measurable
and non negative, then
\[\E \int \crea H d\nu =\E\int H dN\quad\makebox{ and }\quad \E\int\anni HdN=\E\int Hd\nu.\]
\end{Le}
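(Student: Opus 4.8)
The plan is to verify the two identities $\E\int\crea H\,d\nu=\E\int H\,dN$ and $\E\int\anni H\,dN=\E\int H\,d\nu$ by a standard reduction: first establish them for indicator functions $H=\1_{A\times B}$ with $A\in\cA$, $B\in\cX$ (equivalently for products $H(w,x)=g(w)\varphi(x)$), then pass to general non-negative measurable $H$ by linearity and monotone convergence. Because both sides of each identity are linear in $H$ and continuous under increasing limits, it suffices to treat the product case, and in fact one may further reduce $\nu$ to a finite measure by the usual partition argument (take $(B_k)$ a measurable partition of $X$ with $\nu(B_k)<\infty$, restrict $N$ to each piece to get independent Poisson measures $N_k$, and reassemble), so I will assume $\nu(X)<\infty$ and use the explicit construction $N=\sum_{i=1}^{Y}\varepsilon_{x_i}$ on $(\N,\cP(\N),P_{\nu(X)})\times(X,\cX,\nu/\nu(X))^{\Ne}$.

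First I would treat $\crea$. For $H(w,x)=g(w)\varphi(x)$ with $\varphi\ge0$, relation \eqref{234} says that for $\nu$-almost all $x$, $\crea_x w=w+\varepsilon_x$, hence $\crea H(w,x)=g(w+\varepsilon_x)\varphi(x)$, and the left side becomes $\E\int_X g(w+\varepsilon_x)\varphi(x)\,\nu(dx)$. Using the explicit model, $w+\varepsilon_x$ corresponds to adding one more point $x$ to the configuration; conditioning on $Y=n$, the configuration $w+\varepsilon_x$ has $n+1$ points distributed as $n$ i.i.d.\ $\nu/\nu(X)$ points plus the deterministic point $x$. The key computational identity is the Campbell/Mecke-type formula: writing out $\E\int g(w+\varepsilon_x)\varphi(x)\nu(dx)$ as $\sum_n P_{\nu(X)}(n)\int\cdots\int g(\sum_{i=1}^n\varepsilon_{x_i}+\varepsilon_x)\varphi(x)\frac{\nu(dx_1)}{\nu(X)}\cdots\frac{\nu(dx_n)}{\nu(X)}\nu(dx)$ and comparing with $\E\int g\,dN=\sum_m P_{\nu(X)}(m)\,m\int g(\sum_{i=1}^m\varepsilon_{x_i})\frac{\nu(dx_1)}{\nu(X)}\cdots\frac{\nu(dx_m)}{\nu(X)}$, one shifts the index $m=n+1$ and uses $P_{\nu(X)}(n+1)(n+1)=P_{\nu(X)}(n)\nu(X)$ together with the symmetry of $g$ in its points. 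This gives the first identity.

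For $\anni$, I would argue dually. By \eqref{233}, for $N_w$-almost all $x$ (i.e.\ $\bbP_N$-almost all $(w,x)$) we have $\anni_x w=w-\varepsilon_x$, so $\anni H(w,x)=g(w-\varepsilon_x)\varphi(x)$ and the left side is $\E\int_X g(w-\varepsilon_x)\varphi(x)\,N_w(dx)$. On $\{Y=m\}$ this is $\sum_{i=1}^m g(w-\varepsilon_{x_i})\varphi(x_i)$; removing the point $x_i$ leaves a configuration of $m-1$ i.i.d.\ points, and the same index shift $m\mapsto n=m-1$ with $m\,P_{\nu(X)}(m)=\nu(X)P_{\nu(X)}(m-1)$ recovers $\E\int g\,\varphi\,d\nu=\E[g(w)]\int\varphi\,d\nu$ after noting $g(w-\varepsilon_{x_i})$ on $\{Y=m\}$ matches $g$ of a fresh $(m-1)$-point configuration. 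Equivalently and more slickly, this second identity is exactly the statement that $\crea$ and $\anni$ are mutually inverse transformations (almost everywhere) transporting the two measures into one another, so once one identity is proved the other follows by the change-of-variables interpretation: $\anni$ pushes $\bbP_N$ forward to $\bbP\times\nu$ because $\crea$ pushes $\bbP\times\nu$ forward to $\bbP_N$ and the two are inverse where it matters, by \eqref{233}--\eqref{234}.

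The main obstacle is purely bookkeeping: making the index-shift/symmetrization rigorous for general (not product-form) non-negative $H$ and handling the $\sigma$-finite case cleanly, i.e.\ justifying that the partition reduction commutes with the operators $\crea,\anni$ and with the monotone limits. None of this is deep — it is the classical Mecke formula for Poisson measures, here phrased through the creation/annihilation operators — so I would state it concisely, do the finite-intensity product computation in detail, and invoke monotone convergence plus the partition argument for the rest. One should also remark that both sides are allowed to be $+\infty$, so no integrability hypothesis is needed, which is why monotone convergence (rather than dominated convergence) is the right tool.
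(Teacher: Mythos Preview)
Your argument is correct and is essentially the classical Mecke-formula proof via the explicit Poisson construction, but the paper takes a different route. The paper does not use the $(Y,x_1,x_2,\ldots)$ model or the index-shift identity $m\,P_{\nu(X)}(m)=\nu(X)\,P_{\nu(X)}(m-1)$; it tests only on functions of the special form $H(w,x)=e^{-N(f)}(w)\,g(x)$ with $f,g\ge0$ in $L^1\cap L^2(\nu)$, observes that $\crea H=e^{-N(f)}e^{-f}g$, and evaluates both sides directly from the Laplace functional (the identity $\E[e^{-N(f)}N(g)]=\E[e^{-N(f)}]\int e^{-f}g\,d\nu$ being the derivative at $0$ of $t\mapsto\E e^{-N(f+tg)}$), concluding by a monotone class argument; the second identity is dismissed as ``similar''. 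Your approach is more self-contained --- it does not appeal to the density results of \S3.1 and makes the push-forward interpretation of $\crea,\anni$ explicit --- at the price of the reduction to finite intensity and some combinatorial bookkeeping; the paper's approach is shorter and stays inside the analytic machinery already set up. One small slip in your write-up: in the comparison step you wrote ``$\E\int g\,dN$'' and dropped $\varphi$ from the expansion; the right-hand side of the first identity is $\E[g(w)\,N(\varphi)]$, whose expansion on $\{Y=m\}$ carries the extra factor $\varphi(x_m)$ after symmetrization, and it is this factor that matches the $\varphi(x)\,\nu(dx)$ on the left after the shift $m=n+1$.
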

\begin{proof} Let us assume first that $H=e^{-N(f)}g$ where $f$ and $g$
are non negative and belong to $L^1 (\nu)\cap L^2 (\nu)$. We have:
\[ \E \int \crea Hd\nu =\E \int e^{-N(f)} e^{-f(x)}g(x)d\nu (x),\]
and by standard calculations based on the properties of the
Laplace functional we obtain that
\[\E \int e^{-N(f)} e^{-f(x)}g(x)d\nu (x)=\E[e^{-N(f)}N(g)]=\E
\int H dN.\] We conclude using a monotone class argument and similarly for the second equation.
\end{proof}
Let us also remark that if $F\in L^2 (\bbP_N\times\rho)$
satisfies $\int Fd\rho=0$ $\bbP_N$-a.e. then if we put $\crea F
(w,x,r)=F(\crea_x (w),x,r)$ we have
\begin{equation}\label{235}
\int\crea F dN\odot\rho=\int F dN\odot\rho \ \
\bbP\mbox{-}a.e.\end{equation} Indeed $\int (\crea F -F)^2 dNd\rho =0 $
$\bbP$-a.e. because $\crea_x (w)=w$ for $N_w$-almost all $x$.
\begin{Def}\label{def9} For all $F\in\cD_0$, we put
\[ F^\sharp =\int \anni((\crea F)^{\flat})\, d\,N\!\odot\!\rho.\]
\end{Def}
Thanks to hypothesis (\ref{232}) we  have the following
representation of $F^\sharp$:
\[F^\sharp (w,\hw)=\int_{X\times R} \anni((F(\crea_{\cdot}(w))-F(w))^\flat)
(x,r)\;N\odot\rho (dxdr).\] Let us also remark that Definition
\ref{def9} makes sense because for all $F\in\cD_0$ and
$\bbP$-almost all $w\in\Omega$, the map $y\mapsto
F(\crea_{y}(w))-F(w)$ belongs to $\bbd$. To see this, take
$F=e^{i\tN (f)}$ with $f\in \cD (a)\bigcap L^1 (\nu)$, then
\[ F(\crea_{y}(w))-F(w)=e^{i\tN (f)}(e^{if(y)}-1),\]
and we know that $e^{if}-1\in \bbd$.
We now proceed and obtain
\begin{equation*}
(e^{i\tN (f)})^\sharp= \int \anni(e^{i\tN (f)} (e^{if}-1)^\flat) \;dN\odot\rho= \int \anni(e^{i\tN (f)+if}(if)^\flat) \;dN\odot\rho\\
 \end{equation*}
and eventually
\[(e^{i\tN (f)})^\sharp=\int e^{i\tN (f)}(if)^\flat \;dN\odot\rho .\]
So, if $F,G\in\cD_0$, $F=\sum_p \lambda_p e^{i\tN (f_p )}$,
$G=\sum_q \mu_q e^{i\tN (g_q )}$, as $\int f_p^\flat d\rho =\int
g_q^\flat d\rho =0$ and thanks to Corollary \ref{cor7}, we have
\[\hE [F^\sharp \overline{G^\sharp} ]=\sum_{p,q} \lambda_p \overline{\mu_q} e^{i\tN (f_p
-g_q )}\int (if_p )^\flat \overline{(ig_q )^\flat} dNd\rho,\] and so
\begin{equation}\label{236}\hE [F^\sharp \overline{G^\sharp} ]
=\sum_{p,q} \lambda_p \overline{\mu_q} e^{i\tN (f_p -g_q )}N(\gamma(f_p ,g_q)
)
\end{equation}
But, by Definition \ref{def9}, it is clear that $F^\sharp$ does
not depend on the representation of $F$ in $\cD_0$ so as a
consequence of the previous identity $\sum_{p,q} \lambda_p \overline{\mu_q}
e^{i\tN (f_p -g_q )}N(\gamma(f_p ,g_q ))$ depends only on $F$ and
$G$ and thanks to  (\ref{215}), we conclude that  $A_0$ is well-defined and is a linear operator from $\cD_0$ into $L^2
(\bbP )$.
\subsubsection{3.2.4. Upper structure and first properties}
As a consequence of Proposition \ref{pro5}, it is clear that $A_0$
is symmetric, non positive on $\cD_0$ therefore (see
Bouleau-Hirsch \cite{bouleau-hirsch2} p.4) it is closable and we can consider
its Friedrichs extension $(A,\cD (A))$ which generates a closed
Hermitian form $\cE$ with domain $\bbD\supset\cD (A)$ such that
\[\forall U\in\cD (A)\ \forall V\in\bbD ,\ \cE (U,V)=-\E
[A[U]\overline{V}].\] Moreover, thanks to Proposition \ref{pro5}, it is clear
that contractions operate, so (see Bouleau-Hirsch \cite{bouleau-hirsch2} ex. \!\!\!3.6 p.16)
$(\bbD ,\cE )$ is a local Dirichlet form which admits a carré du
champ operator $\Gamma$. The upper
structure that we have obtained $(\Omega,\cA,\bbP ,\bbD,\Gamma)$
satisfies the following properties :
\begin{itemize}\item$\forall f\in\bbd,\ \tN (f)\in\bbD
\makebox{ and }$ \begin{equation}\label{241}  \Gamma [\tN
(f)]=N(\gamma [f]),
\end{equation}
moreover the map $f\mapsto \tN (f)$ is an isometry from $\bbd$
into $\bbD$. \item $ \forall f\in\cD (a),\ e^{i\tN (f)}\in \cD
(A),\makebox{ and }$\begin{equation}\label{242} A[e^{i\tN (f)}]
=e^{i\tN (f)}(i\tN (a[f])-\frac12N(\gamma[f])).
\end{equation}
\item The operator $\sharp$ (defined on $\cD_0$) admits an
extension on $\bbD$, still denoted $\sharp$, it is a gradient
associated to $\Gamma$ and for all $f\in\bbd$:
\begin{equation}\label{243}(\tN (f))^{\sharp}=\int_{X\times R}
f^\flat\; dN\odot\rho.\end{equation}
\end{itemize}
As  a gradient for the Dirichlet structure $(\Omega,\cA,\bbP
,\bbD,\Gamma)$,  $\sharp$ is a closed operator from $L^2 (\bbP )$
into $L^2 (\bbP \times \hbbP )$. It satisfies the chain rule and
operates on the functionals of the form $\Phi (\tN (f))$, $\Phi$ Lipschitz $f\in\bbd$, or
more generally $\Psi (\tN (f_1 ), \cdots ,\tN (F_n ))$
with $\Psi$ Lipschitz  and $\mathcal{C}^1$ and $f_1$, $\cdots $ , $f_n$ in $\bbd$.\\
Let us also remark that if $F$ belongs to $\cD_0$,
\begin{equation}\label{244}
A[F]=N(\anni(a[\crea F])).\end{equation}
\subsubsection{3.2.5. Link with the Fock space}
The aim of this  subsection is to make the link with other
existing works and to present another approach based on the
Fock space. It is independent of the rest of this article.\\
Let $g\in\cD(a )\cap L^1 (\nu )$ such that $-\frac12 \leq g\leq 0$
and $a[g]\in L^1 (\nu )$. Clearly, $f=-\log (1+g)$ is non-negative
and belongs to $\bbd$. We have for all $v\in \bbd \cap L^1 (\nu )$
\begin{eqnarray*}
\cE [e^{-N(f)},e^{-N (v)}]&=& \frac12 \E \left[e^{-N(f)}e^{-N(v)}
\Gamma [N(f),N(v)]\right]\\
&=& \frac12 \E \left[e^{-N(f)}e^{-N(v)} N(\gamma [f,v])\right]\\
&=& \frac12  e^{\int_X (1-e^{-f-v})d\nu} \int_X \gamma
[f,v]e^{-f-v}d\nu .
\end{eqnarray*}
As a consequence of the functional calculus
\[ \int_X \gamma
[f,v]e^{-f-v}d\nu=\int_X \gamma [g, e^{-v}]d\nu =-2 \int_X
a[g]e^{-v}d\nu,\] this yields
\begin{equation}\label{251}
\cE [e^{-N(f)},e^{-N (v)}]=-\E [e^{-N(f)}e^{-N(v)}
N(\frac{a[g]}{1+g})].\end{equation} Thus by Lemma
\ref{lemmeDensite}, we obtain
\begin{Pro}\label{pro10}  Let $g\in\cD(a )\cap L^1 (\nu )$ such that $-\frac12 \leq g\leq 0$
and $a[g]\in L^1 (\nu )$ then \begin{equation}\label{252}
e^{N(\log (1+g))}\in\cD (A) \makebox{ and  }A[e^{N(\log
(1+g))}]=e^{N(\log (1+g))}N(\frac{a[g]}{1+g}).\end{equation}
\end{Pro}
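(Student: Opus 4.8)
The plan is to verify \eqref{252} by identifying which test functionals $e^{-N(v)}$ to pair $e^{N(\log(1+g))}$ against and then invoking the Friedrichs construction. First I would observe that, with $f=-\log(1+g)$, one has $e^{N(\log(1+g))}=e^{-N(f)}$, and the hypotheses $-\tfrac12\leq g\leq 0$, $g\in\cD(a)\cap L^1(\nu)$, together with $f=-\log(1+g)$ being bounded (since $0\leq g+1$ stays away from $0$ and $\leq 1$), guarantee that $f\in\bbd\cap L^1(\nu)\cap L^\infty(\nu)$ and $e^{-N(f)}\in L^2(\bbP)$. The computation leading to \eqref{251} is already carried out in the excerpt for all $v\in\bbd\cap L^1(\nu)$: it shows
\[
\cE[e^{-N(f)},e^{-N(v)}]=-\E\!\left[e^{-N(f)}e^{-N(v)}\,N\!\left(\tfrac{a[g]}{1+g}\right)\right].
\]
The key point to extract from this identity is that the right-hand side is of the form $-\E[W\,\overline{e^{-N(v)}}]$ with $W=e^{-N(f)}N(a[g]/(1+g))$ a fixed element of $L^2(\bbP)$ — here I would need to check that $W\in L^2(\bbP)$, which follows because $a[g]\in L^1(\nu)$ and $e^{-N(f)}$ has exponential moments of all orders under $\bbP$ (Laplace functional), so $N(a[g]/(1+g))$ multiplied by the rapidly-decaying $e^{-N(f)}$ is square-integrable; this is the one genuinely technical estimate in the argument.

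Next I would restrict attention to the real test functionals $e^{-N(v)}$ with $v\geq 0$, $v\in L^1(\nu)\cap L^\infty(\nu)$, which by Lemma~\ref{lemmeDensite} span a dense subspace of $L^2(\bbP)$; one may also arrange $v\in\bbd$ by taking $v$ in the bottom core $H$ from hypothesis (BC), and since linear combinations of such $e^{-N(v)}$ are dense in $\bbD$ as well (they sit in $\cD_0$ after passing to complex exponentials, or one argues directly), the identity above says precisely that the linear functional $V\mapsto -\E[W\overline V]$ agrees with $\cE[e^{-N(f)},V]$ on a dense subset of $\bbD$. By definition of the Friedrichs extension $(A,\cD(A))$ and the fact that $\cE$ is closed, a function $U\in\bbD$ belongs to $\cD(A)$ with $A[U]=W$ if and only if $V\mapsto\cE[U,V]$ extends to a continuous linear form on $L^2(\bbP)$ represented by $-W$; since $W\in L^2(\bbP)$ and the identity holds on a dense set, continuity is automatic, so $e^{-N(f)}\in\cD(A)$ and $A[e^{-N(f)}]=W=e^{-N(f)}N(a[g]/(1+g))$, which is exactly \eqref{252}.

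The main obstacle I anticipate is not the functional-analytic step — once the relevant bilinear identity is in hand, the characterization of the Friedrichs domain closes the argument immediately — but rather the justification that $W=e^{-N(f)}N(a[g]/(1+g))$ genuinely lies in $L^2(\bbP)$ and, relatedly, that all the manipulations producing \eqref{251} (differentiating Laplace functionals, applying the bottom functional calculus to $\gamma[f,v]e^{-f-v}=\gamma[g,e^{-v}]$) are legitimate under only the stated integrability hypotheses $g\in\cD(a)\cap L^1(\nu)$, $a[g]\in L^1(\nu)$, $-\tfrac12\leq g\leq 0$. The sign and boundedness constraints on $g$ are precisely what make $f$ bounded and $e^{-f-v}$ bounded, so the carré-du-champ identity $\gamma[f,v]e^{-f-v}=\gamma[g,e^{-v}]$ and its integral $\int\gamma[g,e^{-v}]\,d\nu=-2\int a[g]e^{-v}\,d\nu$ hold by the Leibniz and integration-by-parts rules of the local bottom structure; I would simply point to Bouleau-Hirsch for these facts and to the Laplace-functional moment bounds for the $L^2$ claim. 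Finally I would note that Lemma~\ref{lemmeDensite} is invoked exactly at the point where density of $\{e^{-N(v)}\}$ in $L^2(\bbP)$ is needed to promote the pointwise bilinear identity to the operator statement.
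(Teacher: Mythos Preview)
Your proposal is correct and follows essentially the same route as the paper: set $f=-\log(1+g)$, use the computation \eqref{251} of $\cE[e^{-N(f)},e^{-N(v)}]$, and then invoke Lemma~\ref{lemmeDensite} to pass from the bilinear identity on test functionals to the operator statement via the Friedrichs characterisation of $\cD(A)$. The paper's own proof is in fact nothing more than the line ``Thus by Lemma~\ref{lemmeDensite}, we obtain\ldots'', so you have spelled out strictly more detail---in particular the $L^2(\bbP)$-membership of $W=e^{-N(f)}N(a[g]/(1+g))$ and the precise role of density---than the paper does.
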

Let us recall that $(p_t )$ is the semigroup associated to the bottom
structure. If $g$ satisfies the hypotheses of the
previous proposition, $p_t g$ also satisfies them. The map $\Psi:
t\mapsto e^{N(\log (1+p_t g))} $ is differentiable and
$\frac{d\Psi}{dt}=A\Psi$ with $\Psi (0)=e^{N(\log (1+g))}$ hence
$\Psi (t)=P_t [e^{N(\log (1+g))}]$ where $(P_t )$ is the strongly
continuous semigroup generated by $A$. So, we have proved
\begin{Pro}\label{pro11} Let $g$ be a measurable function with
$-\frac12 \leq g\leq 0$, then
\[ \forall t\geq 0 ,\ P_t [e^{N(\log (1+g))}]=e^{N(\log(1+p_t
g))}.\]
\end{Pro}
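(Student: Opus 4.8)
The plan is to establish the identity first for the functions $g$ treated by Proposition~\ref{pro10} — call them \emph{admissible}: $g\in\cD(a)\cap L^1(\nu)$, $-\tfrac12\leq g\leq 0$, $a[g]\in L^1(\nu)$ — and then to pass to an arbitrary measurable $g$ with $-\tfrac12\leq g\leq 0$ by approximation. For the admissible case, the first thing to check is that this class is stable under the bottom semigroup $(p_t)$: Markovianity gives $-\tfrac12\leq p_tg\leq 0$, invariance of $\cD(a)$ together with $a\,p_t=p_t\,a$ gives $a[p_tg]=p_ta[g]$, and the $L^1(\nu)$-contractivity of the symmetric sub-Markovian semigroup $(p_t)$ keeps both $p_tg$ and $a[p_tg]$ in $L^1(\nu)$. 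Hence Proposition~\ref{pro10} applies to $p_tg$ for every $t\geq 0$ and yields
\[A\bigl[e^{N(\log(1+p_tg))}\bigr]=e^{N(\log(1+p_tg))}\,N\Bigl(\tfrac{a[p_tg]}{1+p_tg}\Bigr).\]

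Next I would set $\Psi(t)=e^{N(\log(1+p_tg))}\in L^2(\bbP)$ and prove that $t\mapsto\Psi(t)$ is differentiable in $L^2(\bbP)$ with $\Psi'(t)=\Psi(t)\,N\bigl(\tfrac{a[p_tg]}{1+p_tg}\bigr)$. The heart of this is the identity $\partial_t p_tg=p_ta[g]=a[p_tg]$, the strong continuity of $(p_t)$ on $L^1(\nu)$, and the elementary bound $|\log(1+s)|\leq 2|s|$ on $[-\tfrac12,0]$, which let one write $\Psi(t+h)=\Psi(t)\exp\!\bigl(N(\int_t^{t+h}\tfrac{a[p_sg]}{1+p_sg}\,ds)\bigr)$ and control the difference quotient. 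By the displayed formula the limit equals $A\Psi(t)$, so $\Psi$ is an $L^2(\bbP)$-valued solution of $\Psi'=A\Psi$ with $\Psi(0)=e^{N(\log(1+g))}\in\cD(A)$; since $(P_t)$ is the strongly continuous semigroup generated by $A$, uniqueness for this abstract Cauchy problem forces $\Psi(t)=P_t[e^{N(\log(1+g))}]$, which is the claim for admissible $g$.

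To reach a general measurable $g$ with $-\tfrac12\leq g\leq 0$, I would exhaust $X$ by sets $B_k\uparrow X$ with $\nu(B_k)<\infty$ and put $h_k=g\,\1_{B_k}\in L^1(\nu)\cap L^\infty(\nu)$, so $-\tfrac12\leq h_k\leq 0$ and $h_k\downarrow g$ pointwise; then regularize by the resolvent $R_n=(n-a)^{-1}$, setting $g_{k,n}=nR_nh_k=\int_0^\infty ne^{-ns}p_sh_k\,ds$, which is admissible ($g_{k,n}\in\cD(a)$, $-\tfrac12\leq g_{k,n}\leq 0$, $a[g_{k,n}]=n(g_{k,n}-h_k)\in L^1(\nu)$) and satisfies $g_{k,n}\to h_k$ in $L^1(\nu)$ as $n\to\infty$. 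The next step is to transfer convergence to the upper space: expanding $\E\,|e^{-N(f)}-e^{-N(f')}|^2$ by the Laplace characteristic functional of $N$ shows that $L^1(\nu)$-convergence of nonnegative bottom functions entails $L^2(\bbP)$-convergence of the exponential functionals $e^{-N(\cdot)}$; applying this both at time $0$ and at time $t$ (using once more $L^1(\nu)$-contractivity of $p_t$ to get $p_tg_{k,n}\to p_th_k$ in $L^1(\nu)$), together with continuity of $P_t$ on $L^2(\bbP)$, gives the identity for each $h_k$. Finally I would remove the truncation: from the partial-product picture $e^{N(\log(1+h))}=\prod_i(1+h(x_i))$ one gets $e^{N(\log(1+h_k))}\downarrow e^{N(\log(1+g))}$ $\bbP$-a.s., and since $h_k\downarrow g$ forces $p_th_k\downarrow p_tg$ $\nu$-a.e. (positivity and monotone continuity of $p_t$ on $L^\infty(\nu)$, recalling that $\nu$-null sets are $N$-negligible) one likewise gets $e^{N(\log(1+p_th_k))}\downarrow e^{N(\log(1+p_tg))}$ $\bbP$-a.s.; all these functionals lie in $[0,1]$, so the convergences hold in $L^2(\bbP)$ and, by continuity of $P_t$, the identity passes to the limit.

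The main obstacle will be the admissible case, specifically the rigorous justification that $t\mapsto\Psi(t)$ is differentiable \emph{in the $L^2(\bbP)$ sense} and that its derivative is exactly $A\Psi(t)$; this is where the stability of the admissible class under $(p_t)$ and Proposition~\ref{pro10} applied along the whole orbit $(p_tg)_{t\geq0}$ are indispensable. Once that is in place, the uniqueness argument for the Cauchy problem and the two-stage approximation (resolvent regularization, then monotone truncation) are routine.
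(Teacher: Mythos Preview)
Your approach is correct and, for the core case of admissible $g$, identical to the paper's: the paper also observes that the hypotheses of Proposition~\ref{pro10} are stable under $(p_t)$, sets $\Psi(t)=e^{N(\log(1+p_tg))}$, differentiates to get $\Psi'=A\Psi$, and invokes uniqueness for the Cauchy problem to identify $\Psi(t)$ with $P_t\Psi(0)$. The paper actually stops there and states the proposition for general measurable $g$ without further comment; your two-stage approximation (resolvent regularization into the admissible class, then monotone truncation) supplies the passage to the general case that the paper leaves implicit.
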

For any $m\in\Ne$, we denote by $L^2_{sym} (X^m ,\cX^{\otimes m},
\nu^{\times m})$ the set of symmetric functions in $L^2 (X^m
,\cX^{\otimes m}, \nu^{\times m})$ and we recall that $\nu$ is
diffuse.\\
For all $F\in L^2_{sym} (X^m ,\cX^{\otimes m}, \nu^{\times m})$, we
put
\[ I_m (F)=\int_{X^m} F(x_1 ,\cdots ,x_m ){\bf 1}_{\{\forall i\neq j ,
x_i \neq x_j\}}\, \tN (dx_1 )\cdots \tN (dx_m ).\] One can easily
verify that for all $F,G \in L^2_{sym} (X^m ,\cX^{\otimes m},
\nu^{\times m})$ and all $n,m \in\Ne$,\break $\E [I_m (F)I_n (G)]=0$ if
$n\neq m$ and
\[ \E [I_n (F)I_n (G)]=n! \langle F,G\rangle_{L^2_{sym} (X^n ,\cX^{\otimes n},
\nu^{\times n})},\] where $\langle \cdot,\cdot\rangle_{L^2_{sym}
(X^n ,\cX^{\otimes n}, \nu^{\times n})}$ denotes the scalar product
in $L^2_{sym} (X^n ,\cX^{\otimes n}, \nu^{\times n})$. For all
$n\in\Ne$, we consider $C_n$, the Poisson chaos of order $n$, i.e.
the sub-vector space of $L^2 (\Om,\cA, \bbP)$ generated by the
variables $I_n (F)$, $F\in L^2_{sym} (X^n ,\cX^{\otimes n},
\nu^{\times n})$. The fact that
\[L^2 (\Om,\cA, \bbP)=\R\oplus_{n=1}^{+\infty} C_n .\]
has been proved by K. Ito (see \cite{ito}) in 1956. This proof  is based on
the fact that the set $\{ N(E_1)\cdots N(E_k),$ $ (E_i)$ disjoint sets in  $\cX \}$ is total in $L^2 (\Om,\cA, \bbP)$.

Another approach, quite natural, consists in studying carefully, for $g\in L^1\cap L^\infty(\nu)$,
what has to be  subtracted from the integral with respect to the
product measure
\[ \int_{X^n} g(x_1 )\cdots g(x_n)\, \tN (dx_1)\cdots \tN (dx_n)\]
 to obtain the Poisson stochastic integral
\[ I_n (g^{\otimes n})=\int_{X^n} g(x_1 )\cdots g(x_n){\bf 1}_{\{ \forall i\neq j ,x_i \neq x_j \}}\,
\tN (dx_1)\cdots \tN (dx_n).\] This can be done in an elegant way by
the use of lattices of partitions and the M\"{o}bius inversion
formula (see  Rota-Wallstrom \cite{rota-wallstrom}). This leads to the following formula (observe the tilde on the first $N$ only) :
 \[ I_n (g^{\otimes n})=\sum_{k=1}^n
B_{n,k}(\tilde{N}(g), -1!N(g^2 ),2! N(g^3 ),\ldots,
(-1)^{n-k}(n-k)!N(g^{n-k+1})),\] where the $B_{n,k}$ are the exponential Bell polynomials given by
 \[ B_{n,k}=\sum
\displaystyle\frac{n!}{c_1!c_2 !\cdots
(1!)^{c_1}(2!)^{c_2}\cdots}x_1^{c_1}x_2^{c_2}\cdots\] the sum being
taken over all the non-negative integers $c_1,c_2,\cdots$ such that
\begin{eqnarray*}
c_1 +2c_2 +3c_3 +\cdots&=&n\\
c_1 +c_2+\cdots =k. \end{eqnarray*}
 $I_n (g^{\otimes
n})$ is a homogeneous function of order $n$ with respect
to $g$.
If we express the Taylor expansion of $e^{N(\log (1+tg))}$ and
compute the $n$-th derivate with respect to $t$ thanks to the
formula of the composed functions (see Comtet \cite{comtet}) we obtain
\[ e^{N(\log (1+tg))-t\nu(g)}=1+\sum_{n=1}^{+\infty}\frac{t^n}{n!}
\sum_{k=1}^n B_{n,k} (\tilde{N}(g), -1!N(g^2 ),\ldots,
(-1)^{n-k}(n-k)!N(g^{n-k+1}))\] this yields
\begin{equation}\label{252}
e^{N(\log(1+g))-\nu(g)}=1+\sum_{n=1}^{+\infty}\frac{1}{n!}I_n (g^{\otimes
n}).\end{equation} The density of the chaos is now a consequence of
Lemma \ref{lemmeDensite}.

Conversely, one can prove formula (\ref{252}) thanks to the
density of the chaos, see for instance Surgailis \cite{surgailis}.
By transportation of structure, the density of the chaos has a short proof using stochastic calculus for the Poisson process on $\mathbb{R}_+$, cf Dellacherie, Maisonneuve and Meyer \cite{dellacherie} p207, see also
Applebaum \cite{applebaum} Theorems 4.1 and 4.3.

\subsection{Extension of the representation of the gradient and the lent particle
method} \subsubsection{3.3.1. Extension of the representation of the gradient} The goal
of this subsection is to extend formula of
Definition \ref{def9} to any $F\in\bbD$.\\
To this aim, we introduce an auxiliary vector space $\sbbD$ which
is the completion of the algebraic tensor product $\cD_0\otimes
\bbd$ with respect to the  norm $\| \ \|_{\sbbD}$ which is defined
as follows. \\
Considering $\eta$, a fixed strictly positive function on $X$ such that $N(\eta
)$ belongs to $L^2 (\bbP )$, we set for all $H\in \cD_0 \otimes
\bbd$:
$$
\begin{array}{rl} \| H\|_{\sbbD}&=\displaystyle\left( \E\int_X \anni(\gamma
[H])(w,x)N(dx)\right)^{\frac12} +\E\int(\anni|H|)(w,x)\eta
(x)N(dx)\\
&=\displaystyle\left( \E\int_X \gamma
[H])(w,x)\nu(dx)\right)^{\frac12} +\E\int |H|(w,x)\eta
(x)\nu(dx)
\end{array}
$$

One has to note that if $F\in\cD_0$ then $\crea F-F
\in\cD_0 \otimes \bbd$ and if $F=\sum_p \lambda_p e^{i\tN (f_p
)}$, we have
\[\gamma [\crea F-F]=\sum_{p,q}\lambda_p
\overline{\lambda_q} e^{i\tN (f_p -f_q )}e^{i(f_p -f_q)} \gamma [f_p ,f_q
],\] so that 
\[ \int_X \anni\gamma [\crea F-F]\, dN =\int \sum_{p,q}\lambda_p
\overline{\lambda_q} e^{i\tN (f_p -f_q )} \gamma [f_p ,f_q ]\, dN,\] by the
construction of Proposition \ref{pro5}, this last term is nothing
but $\Gamma [F]$. Thus, if $F\in\cD_0$ then $\crea F-F \in\sbbD$
and \begin{eqnarray*} \| \crea F-F\|_{\sbbD}&=& \left( \E \Gamma
[F]\right)^{\frac12} +\E [\int |\crea F-F|\eta \, dN]\\
&\leq& \left(2\cE [F]\right)^{\frac12}+2\| F\|_{L^2 (\bbP )}\|
N(\eta)\|_{L^2 (\bbP )}
\end{eqnarray*}
As a consequence, $\crea-I$ admits a unique  extension on $\bbD$. It is a continuous linear map
from $\bbD$ into $\sbbD$. Since by (13) $\gamma[\crea F-F]=\gamma[\crea F]$ and $(\crea F-F)^\flat=(\crea F)^\flat$, this leads to the following theorem :
\begin{Th}
The formula
\begin{equation}\label{31}
\forall F\in\bbD,\quad\ F^\sharp =\int_{X\times R} \anni((\crea F )^\flat)\,
dN\odot \rho ,\end{equation} is justified by the following
decomposition:
$$F\in\mathbb{D}\quad\stackrel{\crea-I}{\longmapsto}\quad \varepsilon^+F-F\in\underline{\mathbb{D}}\quad\stackrel{\anni((.)^\flat)}{\longmapsto}\quad\anni((\varepsilon^+F)^\flat)\in L^2_0(\mathbb{P}_N\times\rho)\quad\stackrel{d(N\odot\rho)}{\longmapsto}\quad F^\sharp\in L^2(\mathbb{P}\times\hat{\mathbb{P}})$$
where each operator is continuous on the range of the preceding one and where $L^2_0
(\bbP_N \times \rho )$ is the closed set of elements $G$ in $L^2
(\bbP_N \times \rho )$ such that $\int_R G d\rho=0$ $\bbP_N$-a.e.\\
Moreover, we have for all $F\in\bbD$
\[\Gamma [F]=\hE(F^\sharp )^2 =\int_X \anni(\gamma [\crea F ])\, dN.\]
\end{Th}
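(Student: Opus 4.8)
The plan is to realize the three displayed arrows as continuous linear maps, each defined on the range of the preceding one, to compose them, to check that on $\cD_0$ the composite reproduces $F^\sharp$ of Definition~\ref{def9}, and finally to pass to arbitrary $F\in\bbD$ by density of $\cD_0$ in $\bbD$ for the energy norm (which holds because $(\bbD,\cE)$ is the closure of $\cD_0$ obtained through the Friedrichs construction of Proposition~\ref{pro5}).

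Continuity of the first arrow $\crea-I\colon\bbD\to\sbbD$ has already been established before the statement, via the estimate $\|\crea F-F\|_{\sbbD}\leq(2\cE[F])^{1/2}+2\|F\|_{L^2(\bbP)}\|N(\eta)\|_{L^2(\bbP)}$ valid on $\cD_0$ together with the density of $\cD_0$. For the second arrow, take $H$ in the generating set $\cD_0\otimes\bbd$, so that $H(w,\cdot)\in\bbd$ for every $w$; the defining properties of the bottom gradient $\flat$ give $\int_R\bigl(\anni(H^\flat)(w,x,r)\bigr)^2\rho(dr)=\gamma[H](\anni_xw,x)=\anni(\gamma[H])(w,x)$, whence, integrating against $\bbP_N\times\rho$ and using Lemma~\ref{lem8}, $\|\anni(H^\flat)\|^2_{L^2(\bbP_N\times\rho)}=\E\int_X\anni(\gamma[H])\,dN=\E\int_X\gamma[H]\,d\nu$, which is precisely the square of the first summand of $\|H\|_{\sbbD}$. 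Hence $H\mapsto\anni(H^\flat)$ is norm--nonincreasing and extends continuously to $\sbbD$; moreover, since $\int_R H^\flat(w,x,\cdot)\,d\rho=0$ by \eqref{231} and $\anni$ leaves the $r$--variable untouched, the image lies in the closed subspace $L^2_0(\bbP_N\times\rho)$, so this persists for the extension. The third arrow is Corollary~\ref{cor7} verbatim: any $G\in L^2_0(\bbP_N\times\rho)$ is integrable against $N\odot\rho$, the integral lies in $L^2(\bbP\times\hbbP)$, and $\hE[(\int G\,dN\odot\rho)^2]=\int G^2\,dN\,d\rho$ $\bbP$-a.e.; integrating in $\bbP$, it is an isometry onto its image.

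Composing, for $F\in\cD_0$ one has $\crea F-F\in\cD_0\otimes\bbd$ and $(\crea F-F)^\flat=(\crea F)^\flat$ by \eqref{232}, so the composite applied to $F$ equals $\int_{X\times R}\anni((\crea F)^\flat)\,dN\odot\rho=F^\sharp$ of Definition~\ref{def9}; since each arrow is continuous on the range of the previous one and $\cD_0$ is dense in $\bbD$, the composite extends continuously and its value is \eqref{31}, agreeing with the extension of $\sharp$ announced in \S3.2.4. For the last two equalities, note first that on $\cD_0$ identity \eqref{236} with $G=F$ reads $\hE[(F^\sharp)^2]=\sum_{p,q}\lambda_p\overline{\lambda_q}e^{i\tN(f_p-f_q)}N(\gamma[f_p,f_q])$, which is $\Gamma[F]$ by the construction in Proposition~\ref{pro5}; both $F\mapsto\Gamma[F]$ and $F\mapsto\hE[(F^\sharp)^2]$ are continuous from $\bbD$ into $L^1(\bbP)$ (Cauchy--Schwarz for the carr\'e du champ, resp.\ continuity of $\sharp$), so $\Gamma[F]=\hE[(F^\sharp)^2]$ extends to all of $\bbD$. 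Applying the pointwise identity of Corollary~\ref{cor7} to $G=\anni((\crea F)^\flat)$ and using $\int_R G^2\,d\rho=\anni(\gamma[\crea F])$ from the second--arrow computation then gives $\hE[(F^\sharp)^2]=\int_X\anni(\gamma[\crea F])\,dN$, first on $\cD_0$ and then, both sides being continuous in $F\in\bbD$, on all of $\bbD$.

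The main obstacle is the second arrow: one must make sense of ``$\anni((\cdot)^\flat)$'' not merely on the algebraic tensor product $\cD_0\otimes\bbd$ but on its abstract completion $\sbbD$, i.e.\ ensure that a generic element of $\sbbD$ admits a concrete enough representative for the bottom gradient $\flat$ and then the annihilation operator $\anni$ to be applied and to commute with the relevant limits. This is exactly the purpose of the two--part norm $\|\cdot\|_{\sbbD}$: the $\gamma$--part controls the fibrewise derivative, so that $\flat$ and the $L^2(\bbP_N\times\rho)$--norm of $\anni(H^\flat)$ pass to the limit, while the $\eta$--part pins the limit down as an honest function on $\Omega\times X$ in an $L^1(\bbP_N)$--sense; once this, together with Lemma~\ref{lem8} and Corollary~\ref{cor7}, is in hand, the remaining verifications are routine.
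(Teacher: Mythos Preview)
Your proof is correct and follows the same route as the paper: you establish the continuity of the three arrows exactly as the paper does (the first arrow being done in the text preceding the theorem, the second via the estimate $\|\anni(H^\flat)\|_{L^2(\bbP_N\times\rho)}^2=\E\int\anni\gamma[H]\,dN\leq\|H\|_{\sbbD}^2$ on $\cD_0\otimes\bbd$, the third via Corollary~\ref{cor7}), compose them, and identify the composite with Definition~\ref{def9} on $\cD_0$. Your argument is in fact more explicit than the paper's terse proof, in particular your derivation of the final identity $\Gamma[F]=\hE[(F^\sharp)^2]=\int_X\anni(\gamma[\crea F])\,dN$ via Corollary~\ref{cor7} and continuity, which the paper leaves implicit.
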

\begin{proof}
Let $H\in\sbbD$, there exists a sequence $(H_n)$ of elements in
$\cD_0 \otimes \bbd$ which converges to $H$ in $\sbbD$ and we have
for all $n\in\N$
\[\int \anni(H_n^\flat )^2 \, d\bbP_N d\rho =\E\int \anni\gamma [H_n]\, dN\leq \| H_n \|^2_{\sbbD},\] therefore $(H_n^\flat )$
is a Cauchy sequence in $L^2_0 (\bbP_N \times \rho )$ hence
converges to an element in $L^2_0 (\bbP_N \times \rho )$ that we
denote by $\anni(H^\flat)$.\\
Moreover, if $K\in L^2_0 (\bbP_N \times \rho )$, we have
\[ \E\hE \left( \int_{X\times R} K(w,x,r)\, N\odot \rho
(dxdr)\right)^2 =\E \int_{X\times R} K^2 \, dNd\rho= \|
K\|^2_{L^2 (\bbP_N \times \rho )}.\] This provides the assertion of the statement.
\end{proof}

The functional calculus for $\sharp$ and $\Gamma$ involves mutually singular measures and may be followed step by step :

Let us first recall that by Lemma 13 the map $(w,x)\mapsto(\crea_x(w),x)$ applied to classes of functions $\mathbb{P}_N$-a.e. yields classes of functions $\mathbb{P}\times\nu$-a.e. and also the map $(w,x)\mapsto(\anni_x(w),x)$ applied to classes of functions $\mathbb{P}\times\nu$-a.e. yields classes of functions $\mathbb{P}_N$-a.e.

But product functionals of the form $F(w,x)=G(w)g(x)$ where $G$ is a class $\mathbb{P}$-a.e. and $g$ a class $\nu$-a.e. belong necessarily to a single class $\mathbb{P}_N$-a.e. Hence, if we applied $\crea$ to such a functional, this yields a unique class $\mathbb{P}\times\nu$-a.e. In particular with $F=e^{i\tilde{N}f}g$ :
$$\crea(e^{i\tilde{N}f}g)=e^{i\tilde{N}f}e^{if}g\quad\mathbb{P}\times\nu\makebox{-a.e.}$$
from this class the operator $\anni$ yields a class $\mathbb{P}_N$-a.e.
$$\anni(e^{i\tilde{N}f}e^{if}g)=e^{i\tilde{N}f}g\quad\mathbb{P}_N\mbox{-a.e.}$$
and this result is the same as $F$ $\mathbb{P}_N\mbox{-a.e.}$

This applies to the case where $F$ depends only on $w$ and is defined $\mathbb{P}$-a.e. then
$$\anni(\crea F))=F\quad\mathbb{P}_N\mbox{-a.e.}$$
Thus the functional calculus decomposes as follows :

\begin{Pro}\label{pro13} Let us consider the subset of $\mathbb{D}$ of functionals of the form $H=\Phi(F_1,\ldots,F_n)$ with $\Phi\in\mathcal{C}^1\cap Lip(\mathbb{R}^n)$ and $F_i\in\bbD$, putting $F=(F_1,\ldots,F_n)$ we have the following :
$$\begin{array}{rrlll}
a)&
(\crea H)^\flat&=\sum_i\Phi^\prime_i(\crea F)(\crea F_i)^\flat&\hspace{-.5cm}\mathbb{P}\times\nu\times\rho\mbox{-a.e.}\\
&\gamma[\crea H]&=\sum_{ij}\Phi^\prime_i(\crea F)\Phi^\prime_j(\crea F)\gamma[\crea F_i,\crea F_j]&\mathbb{P}\times\nu\mbox{-a.e.}\\
b)&\anni(\crea H)^\flat&=\sum_i\Phi^\prime_i(F)\anni(\crea F_i)^\flat&\hspace{-.5cm}\mathbb{P}_N\times\rho\mbox{-a.e.}\\
&\anni\gamma[\crea H]&=\sum_{ij}\Phi^\prime_i(F)\Phi^\prime_j(F)\anni\gamma[\crea F_i,\crea F_j]&\mathbb{P}_N\mbox{-a.e.}\\
c)&H^\sharp=\int\anni((\crea H)^\flat)\,dN\odot\rho&=\sum_i\Phi_i^\prime(F)\int\anni(\crea F_i)^\flat\;dN\odot\rho&\mathbb{P}\times\hat{\mathbb{P}}\mbox{-a.e.}\\
&\Gamma[H]=\int\anni\gamma[\crea H]dN&=\sum_{ij}\Phi^\prime_i(F)\Phi^\prime_j(F)\int\anni\gamma[\crea F_i,\crea F_j]dN&\mathbb{P}\mbox{-a.e.}
\end{array}$$
\end{Pro}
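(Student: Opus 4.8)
The plan is to lift the ordinary chain rule of the bottom structure through the three elementary operations composing $\sharp$, following the decomposition $F\mapsto\crea F-F\mapsto\anni((\crea F)^\flat)\mapsto F^\sharp$ set up in the theorem above, and keeping careful track of the reference measure, which is $\bbP\times\nu\times\rho$ for the $\flat$-lines of (a), $\bbP_N\times\rho$ for the $\flat$-lines of (b), $\bbP\times\hbbP$ for the $\sharp$-line of (c), and the corresponding projections onto $(w,x)$ or $w$ for the $\gamma$- and $\Gamma$-lines. Throughout write $F=(F_1,\dots,F_n)$; since $\Phi$ is Lipschitz, every $\Phi'_i$ is bounded.

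\emph{Part (a).} For $F_i\in\cD_0$ one knows that, for $\bbP$-a.e.\ $w$, the function $x\mapsto\crea F_i(w,x)-F_i(w)$ belongs to $\bbd$; applying the bottom chain rule (Section 3.2.3) with $\Psi(y)=\Phi(F(w)+y)-\Phi(F(w))$ --- which is $\mathcal{C}^1$, Lipschitz and vanishes at $0$ --- to the vector $(\crea F_i(w,\cdot)-F_i(w))_i$, and using $1^\flat=0$ (see \eqref{232}), gives $(\crea H)^\flat=\sum_i\Phi'_i(\crea F)(\crea F_i)^\flat$; expanding $\|(\crea H)^\flat\|^2$ in the fibre $L^2(\rho)$ and using $\langle u^\flat,v^\flat\rangle=\gamma[u,v]$ then gives the formula for $\gamma[\crea H]$. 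To reach an arbitrary $F_i\in\bbD$, approximate the $F_i$ in $\bbD$ by elements of the core $\cD_0$: then $H=\Phi(F)$ is approximated in $\bbD$ because $\Phi$ is Lipschitz, the continuous maps $\crea-I:\bbD\to\sbbD$ and $K\mapsto K^\flat$ (continuous from $\sbbD$ into $L^2(\bbP\times\nu\times\rho)$, since $\|K\|_{\sbbD}^2\geq\E\int\gamma[K]\,d\nu=\|K^\flat\|^2_{L^2}$) carry the left-hand sides to the limit, and a subsequence extraction together with the continuity and boundedness of the $\Phi'_i$ handles the right-hand sides by dominated convergence.

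\emph{Parts (b) and (c).} Since $\crea_*(\bbP\times\nu)=\bbP_N$ and $\anni_*(\bbP_N)=\bbP\times\nu$ (Lemma \ref{lem8}), the substitution $(w,x)\mapsto(\anni_xw,x)$ turns a $\bbP\times\nu\times\rho$-a.e.\ identity into a $\bbP_N\times\rho$-a.e.\ one; applying it to (a), and using $\anni(\crea F_i)=F_i$ $\bbP_N$-a.e.\ (the remark preceding the statement, ultimately \eqref{233}--\eqref{234}), whence $\anni(\Phi'_i(\crea F))=\Phi'_i(F)$ $\bbP_N$-a.e., yields the two lines of (b). For (c) one integrates against $N\odot\rho$, resp.\ against $N$: as $\Phi'_i(F)$ is bounded and depends on $w$ only, $\Phi'_i(F)\anni((\crea F_i)^\flat)$ still lies in $L^2_0(\bbP_N\times\rho)$ and the factor $\Phi'_i(F)$ comes out of the stochastic integral $\int\cdot\,dN\odot\rho$, giving $H^\sharp=\sum_i\Phi'_i(F)F_i^\sharp$; likewise $\Gamma[H]=\hE((H^\sharp)^2)=\int\anni\gamma[\crea H]\,dN=\sum_{ij}\Phi'_i(F)\Phi'_j(F)\int\anni\gamma[\crea F_i,\crea F_j]\,dN$, using the last display of the theorem above.

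The real work sits in two places. The first is justifying the opening step for a general $F_i\in\bbD$ --- that $x\mapsto\crea F_i(w,\cdot)$ lies in a suitable (localized) bottom domain for $\bbP$-a.e.\ $w$, so that the bottom chain rule genuinely applies --- which is exactly what the space $\sbbD$ of the theorem above was introduced for, and the limiting argument of (a) must be run inside it. The second, more discipline than difficulty, is to check at each equality that one is entitled to pass between $\bbP\times\nu$-classes and $\bbP_N$-classes, since $\crea$ and $\anni$ are only defined up to these (mutually singular) classes; once this bookkeeping is respected, the rest follows from the continuity statements already at hand.
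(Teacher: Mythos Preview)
Your proposal is correct and matches the paper's intended argument. In fact the paper gives no formal proof of this proposition: it is stated immediately after the theorem on the decomposition $F\mapsto\crea F-F\mapsto\anni((\crea F)^\flat)\mapsto F^\sharp$ and the remarks establishing $\anni(\crea F)=F$ $\bbP_N$-a.e., with the phrase ``the functional calculus decomposes as follows'' serving as the entire justification. Your write-up simply makes explicit the three steps the paper leaves implicit --- bottom chain rule for $\flat$, push by $\anni$ using Lemma~\ref{lem8}, integrate against $N\odot\rho$ --- and you have correctly flagged the two genuine issues (extending part~(a) from $\cD_0$ to $\bbD$ via the $\sbbD$-continuity of $\crea-I$, and the bookkeeping between the mutually singular measures $\bbP\times\nu$ and $\bbP_N$), which are exactly the points the paper's preceding discussion is meant to settle.
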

\begin{Rq} The projection of the measure $\mathbb{P}_N$ on $\Omega$ is a (possibly non $\sigma$-finite) measure equivalent to $\mathbb{P}$ only if $\nu(X)=+\infty$, i.e. if $\mathbb{P}\{N(1)>0\}=1$.

If $\nu(X)=\|\nu\|<+\infty$, then $\mathbb{P}\{N(1)=0\}=e^{-\|\nu\|}>0$, and the sufficient condition for existence of density $\Gamma[F]>0\;\mathbb{P}$-a.s. is never fulfilled because $\Gamma[F]=\int\anni(\gamma[\crea F])\,dN$ vanishes on $\{N(1)=0\}$. Conditioning arguments with respect to the set $\{N(1)>0\}$ have to be used.
\end{Rq}
\subsubsection{3.3.2. The lent particle method: first application}
The preceding theorem provides a new method to study the regularity of Poisson functionals, that we present on an example.

Let us consider, for instance, a real process $Y_t$ with independent increments and L\'evy measure $\sigma$ integrating $x^2$, $Y_t$ being supposed centered without Gaussian part. We assume that $\sigma$ has a density satisfying Hamza's condition (Fukushima-Oshima-Takeda \cite{fukushima-oshima-takeda} p105) so that a local Dirichlet structure may be constructed on $\mathbb{R}\backslash\{0\}$ with carr\'e du champ
$
\gamma[f]=x^2f^{\prime 2}(x).
$
We suppose also hypothesis (BC) (cf \S 3.2.1).
If $N$ is the random Poisson measure with intensity $dt\times\sigma$ we have $\int_0^th(s)\;dY_s=\int{\bf 1}_{[0,t]}(s)h(s)x\tilde{N}(dsdx)$ and the choice done for $\gamma$ gives
$\Gamma[\int_0^th(s)dY_s]=\int_0^th^2(s)d[Y,Y]_s
$ for $h\in L^2_{loc}(dt)$.
In order to study the regularity of the random variable $ V=\int_0^t\varphi(Y_{s-})dY_s$ where $\varphi$ is Lipschitz and $\mathcal{C}^1$, we have two ways:

a) We may  represent the gradient $\sharp$ as $Y_t^\sharp=B_{[Y,Y]_t}$ where $B$ is a standard auxiliary independent Brownian motion. Then by the chain rule
$$
V^\sharp=\int_0^t\varphi^\prime(Y_{s-})(Y_{s-})^\sharp dY_s+\int_0^t\varphi(Y_{s-})dB_{[Y]_s}$$ now using $(Y_{s-})^\sharp=(Y_s^\sharp)_-$, a classical but rather tedious stochastic calculus yields
\begin{equation}\label{gamma-exemple}
\Gamma[V]=\hat{\mathbb{E}}[V^{\sharp 2}]=\sum_{\alpha\leq t}\Delta Y_\alpha^2(\int_{]\alpha}^t\varphi^\prime(Y_{s-})dY_s+\varphi(Y_{\alpha-}))^2.
\end{equation} where $\Delta Y_\alpha=Y_\alpha-Y_{\alpha-}$.
Since $V$ has real values the {\it energy image density property} holds for $V$, and $V$ has a density as soon as $\Gamma[V]$ is strictly positive a.s. what may be discussed using the relation (\ref{gamma-exemple}).

b) An other more direct way consists in applying the theorem. For this we define $\flat$ by choosing $\xi$ such that $\int_0^1\xi(r)dr=0$ and $\int_0^1\xi^2(r)dr=1$ and putting
$f^\flat=xf^\prime(x)\xi(r).
$

$1^o$. First step. We add a particle $(\alpha,x)$ i.e. a jump to $Y$ at time $\alpha$ with size $x$ what gives

\noindent$\varepsilon^+V-V=\varphi(Y_{\alpha-})x+\int_{]\alpha}^t(\varphi(Y_{s-}+x)-\varphi(Y_{s-}))dY_s
$

$2^o$. $V^\flat=0$ since $V$ does not depend on $x$, and

\noindent$(\varepsilon^+V)^\flat=(\varphi(Y_{\alpha-})x+\int_{]\alpha}^t\varphi^\prime(Y_{s-}+x)xdY_s)\xi(r)\quad$
\noindent because $x^\flat=x\xi(r)$.

$3^o$. We compute
$\gamma[\varepsilon^+V]=\int(\varepsilon^+V)^{\flat2}dr=(\varphi(Y_{\alpha-})x+\int_{]\alpha}^t\varphi^\prime(Y_{s-}+x)xdY_s)^2$

$4^o$. We take back the particle we gave, in order to compute $\int\anni\gamma[\varepsilon^+V]dN$. That gives
$$\int\anni\gamma[\varepsilon^+V]dN=\int\left(\varphi(Y_{\alpha-})+\int_{]\alpha}^t\varphi^\prime(Y_{s-})dY_s\right)^2x^2\;N(d\alpha dx)$$ and (\ref{gamma-exemple}).

We remark that both operators $F\mapsto \varepsilon^+F$, $F\mapsto(\varepsilon^+F)^\flat$ are non-local, but instead $F\mapsto \int \anni(\varepsilon^+F)^\flat\,d(N\odot\rho)$ and $F\mapsto \int\anni\gamma[\varepsilon^+F]\,dN$ are local : taking back the lent particle gives the locality. We will deepen this example in dimension $p$ in Part 5.

\section{(EID) property on the upper space from (EID) property on the
bottom space and the domain $\bbD_{loc}$}

From now on, we make additional hypotheses on the bottom structure
$(X,\cX,\nu ,\bbd,\gamma)$ which are stronger but  satisfied in
most of the
examples.\\
{\underline{Hypothesis (H1)}}: $X$ admits a partition of the form:
$X=B\bigcup (\bigcup_{k=1}^{+\infty}A_k )$ where for all $k$,
$A_k\in\cX$ with $\nu (A_k )<+\infty$ and $\nu (B)=0$, in such a way that for any $k\in\Ne$ may be defined a local  Dirichlet structure with carr\'e du champ:
\[ \cS_k =(A_k ,\cX_{|A_k}, \nu_{|A_k }, \bbd_k,\gamma_k),\]
with
\[\forall f\in\bbd, \, f_{|A_k }\in\bbd_k \makebox{ and } \gamma
[f]_{|A_k}=\gamma_k [f_{|A_k}].\]
 {\underline{Hypothesis (H2)}}: Any finite product of structures
 $\cS_k$ satisfies (EID).
 \begin{Rq} In many examples where $X$ is a topological space, (H1)
 is satisfied by choosing for $(A_k )$ $k\in\Ne$ a regular
 open set. \\
 Let us remark that (H2) is satisfied for the structures studied in
Part 2.
 \end{Rq}
 The main result of this section is the following:
 \begin{Pro}\label{proEID} If the bottom structure $(X,\cX,\nu
,\bbd,\gamma)$  satisfies (H1) and (H2) then the upper structure
$(\Omega ,\cA ,\bbP ,\bbD,\Gamma )$ satisfies (EID).
\end{Pro}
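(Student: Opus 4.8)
The plan is to obtain (EID) for the upper structure from three stability properties of (EID): stability under countable products of Dirichlet structures (Bouleau-Hirsch \cite{bouleau-hirsch2}, Chap.~V, Prop.~2.2.3, already invoked for Theorem~\ref{EID2}), stability under a countable partition of the probability space, and stability under images by maps that are injective almost everywhere (a mild extension of the image Proposition of Section~2.3). With these, I would reduce the statement step by step to finite self-products of the bottom pieces $\cS_k$, which satisfy (EID) by hypothesis (H2) (compare Lemma~\ref{lemmeEID}).

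First I would use (H1) to write $N=\sum_k N_k$, where $N_k$ is the restriction of $N$ to $A_k$: these form an independent family of Poisson measures with finite intensities $\nu_{|A_k}$, and $B$ carries no point $\bbP$-a.s. since $\nu(B)=0$. Because (H1) presents the bottom structure over $X$ as the direct sum of the $\cS_k$ --- functions restrict and both $\gamma$ and the gradient $\flat$ decompose over the $A_k$ --- the upper structure $(\Om,\cA,\bbP,\bbD,\Gamma)$ is the countable product of the upper structures $(\Om_k,\cA_k,\bbP_k,\bbD_k,\Gamma_k)$ built from the $\cS_k$; one would check this on the total family $e^{i\tN(f)}=\prod_k e^{i\tN_k(f_{|A_k})}$ of Lemma~\ref{lemmeDensite}, using $\tN(f)=\sum_k\tN_k(f_{|A_k})$, $\Gamma[\tN(f)]=N(\gamma[f])=\sum_k N_k(\gamma_k[f_{|A_k}])$ and the welldefinedness of Section~3.2.3, then closing the forms. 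By the product stability of (EID) it then suffices to prove (EID) for the upper structure over a single $\cS_k$, the case of a finite product of the $\cS_k$ being of the same nature; so I fix $k$, keeping in mind that every finite self-product $\cS_k^{\otimes n}$ satisfies (EID) by (H2) and that $c:=\nu(A_k)<+\infty$.

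Next I would split $\Om_k$ by the number of particles: setting $\Om_k^{(n)}=\{N_k(A_k)=n\}$ for $n\geq0$ one gets the disjoint decomposition $\Om_k=\bigcup_n\Om_k^{(n)}$, with $\bbP_k(\Om_k^{(n)})=e^{-c}c^n/n!$. Taking the $A_k$ regular (cf. the Remark following (H2)), $\1_{A_k}\in\bbd_{loc}$ with $\gamma[\1_{A_k}]=0$, hence $N_k(A_k)=\tN_k(\1_{A_k})+c\in\bbD_k$ with $\Gamma_k[N_k(A_k)]=0$; so the sets $\Om_k^{(n)}$ are ``clopen'' for the upper structure, functionals of $\bbD_k$ restrict to the corresponding restricted structures on the $\Om_k^{(n)}$, and, a countable sum of measures each absolutely continuous with respect to $\lambda^d$ being again absolutely continuous with respect to $\lambda^d$, (EID) for $(\Om_k,\bbD_k,\Gamma_k)$ reduces to (EID) on each $\Om_k^{(n)}$. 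On $\Om_k^{(n)}$ the conditioned structure is, by the very construction of the Poisson measure recalled in Section~3, the image of the $n$-fold self-product $\cS_k^{\otimes n}$ under the symmetrization map $s_n:(x_1,\dots,x_n)\mapsto\sum_{i=1}^n\varepsilon_{x_i}$; and $s_n$ is injective off the diagonals $\{x_i=x_j\text{ for some }i\neq j\}$, which are negligible for the product measure because $\nu$ is diffuse, so step~(b) of the proof of the image Proposition of Section~2.3 goes through verbatim (injectivity being used only almost everywhere) and $\Om_k^{(n)}$ inherits (EID) from $\cS_k^{\otimes n}$, which satisfies (EID) by (H2).

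The step I expect to be the main obstacle --- and the reason the domain $\bbD_{loc}$ appears in the title of this section --- is the rigorous identification, at each stage, of the \emph{abstractly} defined upper structure (built by the Friedrichs procedure of Section~3.2) with the \emph{concrete} structures used above: that $(\bbD,\Gamma)$ really is the countable product of the $(\bbD_k,\Gamma_k)$, and that the restriction of $(\bbD_k,\Gamma_k)$ to $\{N_k(A_k)=n\}$ really is the symmetrized $n$-fold product $(s_n)_*\cS_k^{\otimes n}$. Each such identification reduces to checking that the total families of exponential functionals of Lemma~\ref{lemmeDensite} and Lemma~\ref{lem8} belong to the relevant concrete structure with matching carré du champ, and then invoking the uniqueness of the Friedrichs extension together with the closedness of the concrete forms; the passage through $\bbD_{loc}$, and the fact that $N_k(A_k)$ has vanishing energy, are precisely what make the partition by particle number and the localisation inherent in (EID) --- which only tests the set $\{\det\Gamma[\cdot]>0\}$ --- legitimate. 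Once these identifications are secured, chaining the three stability properties yields (EID) for $(\Om,\cA,\bbP,\bbD,\Gamma)$.
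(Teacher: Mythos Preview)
Your outline is sound and would ultimately work, but it is more elaborate than the paper's argument, and in one place you claim more than you need (and more than is easily proved).

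The paper proceeds exactly as you do up to the point of writing $N=\sum_k N_k$ and forming the product $S^\ast=\prod_k S_k$ of the upper structures over the finite-intensity pieces $\cS_k$. The crucial simplification you miss is that each $S_k$ is \emph{already} a countable product of copies of $\cS_k$: in the product construction of the Poisson structure for finite intensity (recalled in \S3.2.2 and Bouleau~\cite{bouleau3} Chap.~VI.3), the configuration space is $(\bbN,P_c)\times(A_k,\nu_{|A_k}/c)^{\Ne}$ with the Dirichlet structure given as the product of the trivial structure on $\bbN$ and $\cS_k$ on each factor. Hence $S^\ast$ is one big countable product of structures drawn from the family $\{\cS_k\}$, and (H2) together with Bouleau--Hirsch Chap.~V Prop.~2.2.3 gives (EID) for $S^\ast$ directly --- no partition by particle number, no symmetrization map, no image proposition. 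Your route via $\{N_k(A_k)=n\}$ and $s_n$ recovers the same conclusion but at the cost of the identifications you yourself flag as the main obstacle, plus the technical point of showing $1\in\bbd_k$ (you have only $1\in\bbd_{loc}$).

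Second, you assert that $(\bbD,\Gamma)$ \emph{is} the countable product of the $(\bbD_k,\Gamma_k)$. The paper does not claim this; it only shows $\bbD\subset\bbD^\ast$ with $\Gamma=\Gamma^\ast$ on $\bbD$, by checking on exponentials in $\cD_0$ and closing. That inclusion is all that (EID) requires, since for $F\in\bbD^d$ one has $F\in(\bbD^\ast)^d$ and $\det\Gamma[F]=\det\Gamma^\ast[F]$. Proving equality of domains would need an extra argument (essentially Markov uniqueness), which is unnecessary here.
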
\begin{proof}
For all $k\in\Ne$, since $\nu (A_k )<+\infty$, we consider an upper
structure $S_k =(\Omega_k, \cA_k ,$ $\bbP_k ,\bbD_k ,\Gamma_k )$
associated to $\cS_k$ as a direct application of the construction by
product (see \S 3.3.2 above or Bouleau \cite{bouleau3} Chap. VI.3).\\
Let $k\in\Ne$, we denote  by $N_k$ the corresponding random Poisson
measure on $A_k$ with intensity $\nu_{|A_k}$ and we consider
$N^{\ast}$ the random Poisson measure on $X$ with intensity $\nu$,
defined on the product probability space
\[ (\Omega^{\ast} , \cA^{\ast} ,\bbP^{\ast} )=\prod_{k=1}^{+\infty}
(\Omega_k  ,\cA_k ,\bbP_k ),\] by
\[N^{\ast}=\sum_{k=1}^{+\infty} N_k .\]
In a natural way, we consider the product Dirichlet structure
\[ S^{\ast} =(\Omega^{\ast} , \cA^{\ast} ,\bbP^{\ast},\bbD^{\ast} ,\Gamma^{\ast}
)=\prod_{k=1}^{+\infty} S_k .\] In the third Part, we have built
using the Friedrichs argument, the Dirichlet structure
\[ S=(\Omega ,\cA ,\bbP ,\bbD ,\Gamma ),\]
let us now make the link between those structures.\\
First of all, thanks to Theorem 2.2.1 and Proposition 2.2.2. of
Chap. V in Bouleau-Hirsch  \cite{bouleau-hirsch2},  we know that a function
$\varphi$ in $L^2 (\bbP^{\ast} )$ belongs  to $\bbD^{\ast}$ if and
only if
\begin{enumerate}
\item For all $k\in\Ne$ and $\prod_{n\neq k}\bbP_n$-almost all $\xi_1
,\cdots ,\xi_{k_1},\xi_{k+1},\cdots $, the map
\[ \xi\mapsto \varphi (\xi_1
,\cdots ,\xi_{k_1},\xi ,\xi_{k+1},\cdots )\] belongs to $\bbD_k$.
\item $\sum_k \Gamma_k [\varphi ]\in L^1 (\bbP^{\ast})$ and we have
$\Gamma^{\ast}[\varphi ]=\sum_k \gamma_k [\varphi ]$.
\end{enumerate}
Consider $f\in\bbd\cap L^1 (\gamma )$ then clearly
\[N(f)=\sum_k N_k (f_{|A_k })\]
belongs to $\bbD^{\ast}$ and in the same way
\[ e^{i\tN (f)}=\prod_k e^{i\tN_k (f_{|A_k})} \in \bbD^{\ast}.\]
Moreover, by hypothesis (H1):
\begin{eqnarray*}
\Gamma^{\ast} [e^{i\tN (f)}]&=&\sum_k | \prod_{l\neq
k}e^{i\tN_l (f_{|A_l})}|^2 \Gamma_k[e^{i\tN_k (f_{|A_k})}]=\sum_k N_k (\gamma[f]_{|A_k})\\
&=& N (\gamma[f])=\Gamma [e^{i\tN (f)}].
\end{eqnarray*}
Thus as $\cD_0$ is dense in $\bbD$, we conclude that $\bbD \subset
\bbD^{\ast}$ and $\Gamma =\Gamma^{\ast}$ on $\bbD$.\\
As for all $k$, $S_k$ is a product structure, thanks to hypothesis
(H2) and Proposition 2.2.3 in Bouleau-Hirsch  \cite{bouleau-hirsch2} Chapter V, we conclude that $S^{\ast}$  satisfies (EID) hence $S$ too.
\end{proof}
\noindent{\bf Main case.} Let $N$ be a random Poisson measure on $\mathbb{R}^d$ with intensity measure $\nu$ satisfying one of the following conditions :

i) $\nu=k\,dx$ and a function $\xi$ (the carr\'e du champ coefficient matrix) may be chosen such that hypotheses (HG) hold (cf \S2.1)

ii) $\nu$ is the image by a Lipschitz injective map of a measure satisfying (HG) on $\mathbb{R}^q$, $q\leq d$,

iii) $\nu$ is a product of measures like ii),

\noindent then the associated Dirichlet structure $(\Omega,\mathcal{A},\mathbb{P},\mathbb{D},\Gamma)$ constructed (cf \S3.2.4) with $\nu$ and the carr\'e du champ obtained by the $\xi$ of i) or induced by operations ii) or iii) satisfies (EID).\\

We end this section by a few remarks on the localization of this
structure which permits to extend the functional calculus related to
$\Gamma$ or $\sharp$ to bigger spaces than $\bbD$, which is
often convenient from a practical point of view.\\
Following Bouleau-Hisrch (see \cite{bouleau-hirsch2}} p. 44-45) we recall that
$\bbD_{loc}$ denotes the set of functions $F:\Omega \rightarrow\R$
such that there exists a sequence $(E_n )_{n\in\Ne}$ in $\cA$ such
that \[ \Omega =\bigcup_n E_n \makebox{ and } \forall n\in\Ne ,\
\exists F_n \in\bbD\ F_n =F \makebox{ on } E_n .\] Moreover if
$F\in\bbD_{loc}$, $\Gamma [F]$ is well-defined and satisfies
$(EID)$ in the sense that
\[ F_\ast (\Gamma [F]\cdot P)\ll \lambda^1.\]
More generally, if $(\Om, \cA ,\bbP ,\bbD ,\Gamma )$ satisfies
(EID),
\[ \forall F\in (\bbD_{loc})^n ,\ F_\ast (det\Gamma[F]\cdot\bbP )\ll
\lambda^n .\] We can consider another space bigger than
$\bbD_{loc}$ by considering a partition of $\Omega$ consisting in
a sequence of sets with negligible boundary. More precisely, we
denote by $\bbD_{LOC}$ the set of functions $F:\Omega
\rightarrow\R$ such that there exists a sequence of disjoint sets
$(A_n )_{n\in\Ne}$ in $\cA$  such that  $\bbP (\Omega
\setminus\bigcup_n A_n ) =0$ and \[ \forall n\in\Ne ,\ \exists F_n
\in\bbD\ F_n =F \makebox{ on } A_n .\] One can easily verify that
it contains the localized domain of any structure $S^{\ast}$ as
considered in the proof of Proposition \ref{proEID}, that $\Gamma$
is well-defined on $\bbD_{LOC}$, that the functional calculus
related  to $\Gamma$ or $\sharp$ remains valid and that it
satisfies (EID) i.e. if $(\Om, \cA ,\bbP ,\bbD ,\Gamma )$
satisfies (EID),
\[ \forall F\in (\bbD_{LOC})^n ,\ F_\ast (det\Gamma[F]\cdot\bbP )\ll
\lambda^n .\]
\section{Examples}
\subsection{Upper bound of a process on [0,t]}
Let $Y$ be a real process with stationary independent increments satisfying the hypotheses of example 3.3.2. 

We consider a real c\`adl\`ag process $K$ independent of $Y$ and put $H_s=Y_s+K_s$.
\begin{Pro}\label{proEx1} If $\sigma(\mathbb{R}\backslash\{0\})=+\infty$ and if $\mathbb{P}[\sup_{s\leq t}H_s=H_0]=0$,  the random variable $\sup_{s\leq t}H_s$ possesses a density.
\end{Pro}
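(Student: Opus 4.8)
The strategy is to apply the lent particle method together with the (EID) property established for the upper structure (Proposition \ref{proEID} and the ``Main case''), exactly as in the example of \S3.3.2. Since $\sup_{s\le t}H_s$ is a real-valued functional, the energy image density property reduces the existence of a density to showing that $\Gamma[\sup_{s\le t}H_s]>0$ $\mathbb{P}$-a.s. The condition $\sigma(\mathbb{R}\backslash\{0\})=+\infty$ guarantees, via the Remark following Proposition \ref{pro13}, that $\mathbb{P}\{N(1)>0\}=1$, so that the degeneracy of $\Gamma$ on $\{N(1)=0\}$ is not an obstruction; and the hypothesis $\mathbb{P}[\sup_{s\le t}H_s=H_0]=0$ rules out the trivial atom at the starting point.

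\textbf{Step 1: membership in a localized domain.} First I would check that $S:=\sup_{s\le t}H_s$ belongs to $\bbD_{loc}$ (or $\bbD_{LOC}$). Writing $S=\sup_{s\le t}(Y_s+K_s)$ and conditioning on the independent process $K$, the map $Y\mapsto\sup_{s\le t}(Y_s+K_s)$ is $1$-Lipschitz in sup-norm, hence composing with the finitely-many-jumps approximations of $Y$ and localizing on sets where finitely many jumps exceed a threshold puts $S$ in $\bbD_{loc}$; the functional calculus for $\Gamma$ and $\sharp$ then applies.

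\textbf{Step 2: compute $\varepsilon^+S-S$ and $\gamma[\varepsilon^+S]$.} Adding a particle $(\alpha,x)$ — i.e. a jump of size $x$ to $Y$ at time $\alpha\le t$ — increases $H_s$ by $x$ for all $s\ge\alpha$. Hence, on the event that the supremum before adding the particle is attained at some time $\le\alpha$ or after, one gets the clean bound
\[
\varepsilon^+S-S=\bigl(x+(\,\sup_{\alpha\le s\le t}H_s+x\ \text{vs}\ \sup_{s\le t}H_s)\bigr)^+\!,
\]
and more precisely $\varepsilon^+S-S=(x-(S-\sup_{\alpha\le s\le t}H_s))^+$ up to the obvious bookkeeping, which is a Lipschitz function of $x$ with derivative in $x$ equal to $\mathbf 1_{\{x>S-\sup_{\alpha\le s\le t}H_s\}}$ off a negligible set. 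Since $S$ itself does not depend on $x$, $S^\flat=0$, so $(\varepsilon^+S)^\flat=\partial_x(\varepsilon^+S)\cdot x\,\xi(r)$ and
\[
\gamma[\varepsilon^+S]=\int_0^1(\varepsilon^+S)^{\flat2}\,dr=x^2\,\mathbf 1_{\{\text{the added jump strictly raises the running sup on }[\alpha,t]\}}.
\]

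\textbf{Step 3: take the particle back and conclude positivity.} By the theorem, $\Gamma[S]=\int\anni\gamma[\varepsilon^+S]\,dN=\sum_{\alpha\le t}\Delta Y_\alpha^2\,\mathbf 1_{\{\cdots\}}$, the sum over the jump times of $Y$, where the indicator selects those jumps whose removal would strictly lower $\sup_{s\le t}H_s$. The hard part is to show this sum is a.s. strictly positive: one must exhibit, $\mathbb{P}$-a.s., at least one jump of $Y$ that genuinely contributes to the maximum. Here I would argue by contradiction: if no jump contributed, the supremum of $H$ on $[0,t]$ would coincide with the supremum of the continuous part $K_s$ plus the drift-compensated part of $Y$ on the left-limits, and one shows — using $\sigma(\mathbb{R}\backslash\{0\})=+\infty$, so that jumps accumulate densely in time, together with $\mathbb{P}[\sup_{s\le t}H_s=H_0]=0$ — that with positive probability the running maximum is pushed strictly above its ``no-big-jump'' level by some jump, and by a $0$--$1$-type argument (independence of the Poisson measure over disjoint time strata) this has probability one. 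This last accumulation/contradiction argument is where the real work lies; the rest is the now-routine lent particle computation. Then (EID) for the real-valued $S$ in $\bbD_{loc}$ yields $S_*(\Gamma[S]\cdot\mathbb{P})\ll\lambda^1$, and since $\Gamma[S]>0$ a.s. this forces the law of $S$ to be absolutely continuous, i.e. $\sup_{s\le t}H_s$ has a density.
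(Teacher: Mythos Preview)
Your overall plan is the same as the paper's, but Step~3 contains a real gap: you misidentify the indicator that appears in $\Gamma[S]$ after applying $\anni$, and this makes the positivity argument look much harder than it is. Write $\varepsilon^+M(\alpha,x)=\max\bigl(\sup_{s<\alpha}H_s,\ \sup_{s\ge\alpha}H_s+x\bigr)$ and differentiate in $x$ to get
\[
\gamma[\varepsilon^+M](\alpha,x)=x^2\,\mathbf 1_{\{\sup_{s\ge\alpha}H_s+x\ \ge\ \sup_{s<\alpha}H_s\}}.
\]
When you take the particle back, $x$ becomes $\Delta Y_\alpha$ and the ambient process $Y$ loses that jump on $[\alpha,t]$; the two shifts cancel on the right piece, so the indicator becomes simply $\mathbf 1_{\{\sup_{s\ge\alpha}H_s\ \ge\ \sup_{s<\alpha}H_s\}}$, evaluated on the \emph{original} path. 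This is \emph{not} ``removing the jump strictly lowers the supremum'' as you wrote; it is the event ``the overall maximum of $H$ on $[0,t]$ is attained on $[\alpha,t]$''. With this correct reading, positivity is immediate: if $\Gamma[M]=\sum_{\alpha\le t}\Delta Y_\alpha^2\,\mathbf 1_{\{\sup_{s\ge\alpha}H_s\ge\sup_{s<\alpha}H_s\}}=0$, then for every jump time $\alpha$ one has $\sup_{s\ge\alpha}H_s<\sup_{s<\alpha}H_s$; since $\sigma(\mathbb{R}\setminus\{0\})=+\infty$ the jump times are dense, and letting $\alpha\downarrow0$ along them forces $\sup_{s\le t}H_s\le H_0$, hence $=H_0$, which has probability~$0$ by hypothesis. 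No $0$--$1$ law or ``contributing jump'' argument is needed.

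Two smaller points. First, your Step~2 formula $\varepsilon^+S-S=(x-(S-\sup_{\alpha\le s\le t}H_s))^+$ is only valid for $x\ge0$ and implicitly assumes the maximum was attained before $\alpha$; the clean way is the $\max$ expression above, whose $x$-derivative is the indicator without any sign restriction. Second, the paper handles integrability not by your localization of jump sizes but by truncating $K$: replacing $K_s$ by $(K_s\wedge k)\vee(-k)$ gives $\sup_{s\le t}|K_s|\in L^2$, hence $M\in\bbD$, and one passes to the limit using that if $\mathbb{P}[X_n\neq X]\to0$ and each $X_n$ has a density then so does $X$.
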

\begin{proof} a) We may suppose that $K$ satisfies $\sup_{s\leq t}|K_s|\in L^2$. Indeed, if random variables $X_n$ have densities and $\mathbb{P}[X_n\neq X]\rightarrow 0$, then $X$ has a density. Hence the assertion is obtained by considering $(K_s\wedge k)\vee(-k)$.

b) Let us put $M=\sup_{s\leq t}H_s$. Applying the lent particle method gives
$$\begin{array}{rl}
(\varepsilon^+M)(\alpha,x)&=\sup_{s\leq t}((Y_s+K_s)1_{\{s<\alpha\}}+(Y_s+x+K_s)1_{\{s\geq \alpha\}})\\
&=\max(\sup_{s<\alpha}(Y_s+K_s),\sup_{s\geq\alpha}(Y_s+x+K_s))\\
\gamma[\varepsilon^+M](\alpha,x)&={\bf 1}_{\{\sup_{s\geq\alpha}(Y_s+x+K_s)\geq\sup_{s<\alpha}(Y_s+K_s)\}}\gamma[j](x)
\end{array}
$$
where $j$ is the identity map $j(x)=x$.

We take back the lent particle before integrating with respect to $N$ and obtain, since $\gamma[j](x)=x^2$,
$$\Gamma[M]=\int\anni\gamma[\varepsilon^+M]\,N(d\alpha dx)=
\sum_{\alpha\leq t}\Delta Y_\alpha^2{\bf 1}_{\{\sup_{s\geq\alpha}(Y_s+K_s)\geq\sup_{s<\alpha}(Y_s+K_s)\}}.$$
As $\sigma(\mathbb{R}\backslash\{0\})=+\infty$, Y has infinitely many jumps on every time interval, so that
$$\Gamma[M]=0\quad\Rightarrow\quad\forall\alpha\leq t\; \sup_{s\geq\alpha}(Y_s+K_s)<\sup_{s<\alpha}(Y_s+K_s)$$
and choosing $\alpha$ decreasing to zero, we obtain
$$\Gamma[M]=0\quad\Rightarrow\quad \sup_{t\geq s\geq 0}H_s=H_0$$ and the proposition.
\end{proof}
It follows that any real L\'evy process $X$ starting at zero and immediately entering $\mathbb{R}_+^\ast$, whose L\'evy measure dominates a measure $\sigma$ satisfying Hamza's condition and infinite, is such that $\sup_{s\leq t}X_s$ has a density.

\subsection{Regularizing properties of L\'evy processes}
Let $Y$ be again a real process with stationary independent increments satisfying the hypotheses of example 3.3.2. By Hamza's condition, hypothesis (H1) is fulfilled and hypothesis (H2) ensues from Theorem 2, so that the upper structure verifies (EID).

Let $S$ be an $\mathbb{R}^p$-valued semi-martingale independent of $Y$. We will say that $S$ is pathwise $p$-dimensional on $[0,t]$ if almost every sample path of $S$ on $[0,t]$ spans a $p$-dimensional vector space.

We consider the $\mathbb{R}^p$-valued process $Z$ whose components are given by
$$Z_t^1=S_t^1+Y_t^1\quad\mbox{and}\quad Z_t^i=S_t^i\quad\forall i\geq 2$$ and the stochastic integral
$$R=\int_0^t\psi(Z_{s-})\,dZ_s$$
where $\psi$ is a Lipschitz and $\mathcal{C}^1$ mapping from $\mathbb{R}^p$ into $\mathbb{R}^{p\times p}$.
\begin{Pro} If  $\sigma(\mathbb{R}\backslash\{0\})=+\infty$, if the Jacobian determinant of the column vector $\psi_{.1}$ does not vanish and if $R$ is pathwise $p$-dimensional on $[0,t]$, then the law of $R$ is absolutely continuous with respect to $\lambda^p$.
\end{Pro}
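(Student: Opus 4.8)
The plan is to run the lent particle method of Example 3.3.2, now componentwise in $\mathbb R^p$, and then to invoke the fact — recalled at the beginning of this subsection — that the upper structure satisfies (EID).

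\emph{Lent particle computation.} Adding a particle $(\alpha,x)$ to $N$ amounts to adding a jump of size $x$ at time $\alpha$ to $Y$, hence to $Z^1$; this replaces $Z$ by $Z+x\,e_1\,{\bf 1}_{[\alpha,t]}$ together with the atom $x\,e_1$ in $dZ$ at $\alpha$, where $e_1=(1,0,\dots,0)$. Thus
$$\crea R=\int_{(0,\alpha]}\psi(Z_{s-})\,dZ_s+x\,\psi(Z_{\alpha-})e_1+\int_{(\alpha,t]}\psi(Z_{s-}+x\,e_1)\,dZ_s .$$
Since $R$ does not depend on $x$, $R^\flat=0$, and with $f^\flat=x f'(x)\xi(r)$ (the bottom gradient, carr\'e du champ $x^2f'^2$, $\int\xi^2\,dr=1$) the chain rule gives $(\crea R)^\flat=x\,\xi(r)\,\partial_x(\crea R)$, so
$$\gamma[\crea R]=x^2\,v\,v^{t},\qquad v:=\psi(Z_{\alpha-})e_1+\int_{(\alpha,t]}(\partial_1\psi)(Z_{s-}+x\,e_1)\,dZ_s ,$$
where $\partial_1\psi=(\partial\psi_{ij}/\partial z_1)_{i,j}$; this is a rank-one matrix. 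Taking back the lent particle ($\anni$ restores $Z$ to its actual path, so $Z_{s-}+x\,e_1$ becomes $Z_{s-}$) and integrating against $N$ (so $x=\Delta Y_\alpha$) gives, in analogy with (\ref{gamma-exemple}),
$$\Gamma[R]=\int\anni\gamma[\crea R]\,dN=\sum_{\alpha\leq t}\Delta Y_\alpha^{2}\;U_\alpha U_\alpha^{t},\qquad U_\alpha:=\psi_{\cdot1}(Z_{\alpha-})+\int_{(\alpha,t]}(\partial_1\psi)(Z_{s-})\,dZ_s\in\mathbb R^p ,$$
$\psi_{\cdot1}=\psi\,e_1$ being the first column of $\psi$.

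\emph{Reduction via (EID).} Because the upper structure satisfies (EID) and $R$ has components in $\bbD_{LOC}$ (cf. the end of Section 4), one has $R_\ast(\det\Gamma[R]\cdot\bbP)\ll\lambda^p$, so $R$ admits a density on $\{\det\Gamma[R]>0\}$. It remains to prove $\det\Gamma[R]>0$ a.s., i.e. that a.s. the vectors $\{U_\alpha:\alpha\leq t,\ \Delta Y_\alpha\neq0\}$ span $\mathbb R^p$.

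\emph{Non-degeneracy of $\Gamma[R]$.} Since $\sigma(\mathbb R\setminus\{0\})=+\infty$, a.s. the jump times of $Y$ are dense in $[0,t]$ and $Y$ has jumps of arbitrarily small size in every subinterval; as $\alpha\mapsto U_\alpha$ has right-hand limits $U_{\beta+}=\psi_{\cdot1}(Z_\beta)+\int_{(\beta,t]}(\partial_1\psi)(Z_{s-})\,dZ_s$, the closed span of $\{U_\alpha\}$ contains $U_{\beta+}$ for every $\beta\in[0,t)$. Assume, for contradiction, that on an event of positive probability there is $c\in\mathbb R^p\setminus\{0\}$ with $c^{t}U_\alpha=0$ for all jump times $\alpha\leq t$; then $c^{t}U_{\beta+}\equiv0$ and, differentiating in $\beta$, the semimartingale identity
$$d\!\left[\,c^{t}\psi_{\cdot1}(Z_s)\,\right]=c^{t}(\partial_1\psi)(Z_{s-})\,dZ_s ,\qquad 0\leq s\leq t,$$
holds. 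Matching the jumps of the two sides — the jump of $g(Z_\cdot):=c^{t}\psi_{\cdot1}(Z_\cdot)$ at $s$ is $g(Z_{s-}+\Delta Z_s)-g(Z_{s-})$, that of $\int c^{t}(\partial_1\psi)(Z_{s-})\,dZ_s$ is $c^{t}(\partial_1\psi)(Z_{s-})\Delta Z_s$ — gives, at every jump time $s$ of $Y$ (at which, a.s., $S$ does not jump, so $\Delta Z_s=\Delta Y_s\,e_1$),
$$g(Z_{s-}+\Delta Y_s\,e_1)-g(Z_{s-})=\Delta Y_s\,\partial_1 g(Z_{s-}),$$
while the continuous parts must match as well. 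Using the density of the $Y$-jump times, the way $\operatorname{supp}\sigma$ accumulates near $0$, and the fact that $\nabla g=c^{t}D\psi_{\cdot1}$ never vanishes (since $\det D\psi_{\cdot1}\neq0$), one propagates these identities along the path and reaches a contradiction with the hypothesis that $R$ is pathwise $p$-dimensional on $[0,t]$. Hence $\det\Gamma[R]>0$ a.s. and the proposition follows.

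\emph{Main obstacle.} Everything through the formula for $\Gamma[R]$ and the appeal to (EID) is a routine $\mathbb R^p$-transcription of Example 3.3.2. The delicate step is the last one: converting the pathwise identity above into the impossibility of $R$ being pathwise $p$-dimensional. This rests on a careful use of the jump/continuous decomposition of semimartingales, the a.s. disjointness of the jump times of $S$ and $Y$, the density of the $Y$-jump times together with the accumulation of $\operatorname{supp}\sigma$ at $0$, and the local-diffeomorphism property of $\psi_{\cdot1}$ — it is precisely this interplay that makes the three hypotheses of the statement the natural ones.
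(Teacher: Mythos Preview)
Your lent-particle computation and the appeal to (EID) match the paper's approach exactly, and your formula $\Gamma[R]=\sum_\alpha\Delta Y_\alpha^2\,U_\alpha U_\alpha^{t}$ is the same endpoint (the paper writes $U_\alpha$ with $\partial_1\psi_{\cdot1}(Z_{s-})\,dY_s$ rather than $(\partial_1\psi)(Z_{s-})\,dZ_s$; this is a minor discrepancy in bookkeeping, not in method).

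The genuine gap is in your non-degeneracy step. You derive the semimartingale identity $d[c^{t}\psi_{\cdot1}(Z_s)]=c^{t}(\partial_1\psi)(Z_{s-})\,dZ_s$, match jumps, and then write ``one propagates these identities along the path and reaches a contradiction with the hypothesis that $R$ is pathwise $p$-dimensional'' --- but you never actually carry out that propagation, and the jump-matching relation $g(Z_{s-}+\Delta Y_s e_1)-g(Z_{s-})=\Delta Y_s\,\partial_1 g(Z_{s-})$ is asymptotically trivial for small $\Delta Y_s$ (Taylor to first order), so it gives no leverage by itself. Your closing paragraph concedes this is the ``main obstacle'' without resolving it.

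The paper's route at this point is shorter and avoids the detour through jump-matching. From $\lambda^{t}U_\alpha=0$ for all $\alpha$ in the dense set $JT$, the paper takes left limits (using that $\psi$ is $\mathcal C^1$ and the c\`adl\`ag regularity of the integrals) and subtracts two instances of the relation to kill the integral term, obtaining directly
\[
\sum_{k=1}^p\lambda_k\,\psi_{k1}(Z_{\alpha-})=0\qquad\forall\,\alpha\in JT,\ \text{hence }\forall\,\alpha\in(0,t],
\]
i.e.\ the path $\psi_{\cdot1}(Z_{\cdot-})$ lies in a fixed hyperplane. That is the contradiction with the two remaining hypotheses (non-vanishing Jacobian of $\psi_{\cdot1}$ and $R$ pathwise $p$-dimensional). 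So the fix is: instead of differentiating $c^{t}U_{\beta+}$ and analysing the resulting SDE, subtract nearby instances of $\lambda^{t}U_\alpha=0$ along $JT$ and pass to the limit to isolate $\lambda^{t}\psi_{\cdot1}(Z_{\alpha-})$ directly.
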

\begin{proof} We apply the lent particle method. Putting $\overline{x}=(x,0,\ldots,0)$  and \break $R^i=\sum_j\int_0^t\psi_{ij}(Z_{s-})\,dZ_s^j$, we have
$$\varepsilon^+R^i-R^i=\psi_{i1}(Z_{\alpha-})x+\int_{]\alpha}^t(\psi_{i1}(Z_{s-}+\overline{x})-\psi_{i1}(Z_{s-}))\,dY_s$$
as in example 3.3.2,
$$(\varepsilon^+R^i)^\flat=(\psi_{i1}(Z_{\alpha-})x+\int_{[\alpha}^t\partial_1\psi_{i1}(Z_{s-}+\overline{x})x\,dY_s)\xi(r)$$
and\\
$\gamma[\varepsilon^+R^i,\varepsilon^+R^j]=$
$$\left(\psi_{i1}(Z_{\alpha-})+\int_{[\alpha}^t\partial_1\psi_{i1}(Z_{s-}+\overline{x})\,dY_s\right)\left(\psi_{j1}(Z_{\alpha-})+\int_{[\alpha}^t\partial_1\psi_{j1}(Z_{s-}+\overline{x})\,dY_s\right)x^2.$$
We take back the lent particle before integrating in $N$ :
$$\Gamma[R^i,R^j]=\int\anni(\gamma[\varepsilon^+R^i,\varepsilon^+R^j])\,dN= \sum_{\alpha\leq t}\Delta Y_\alpha^2 U_\alpha U_\alpha^t$$
where $U_\alpha$ is the column vector $\psi_{.1}(Z_{\alpha-})+\int_{[\alpha}^t\partial_1\psi_{.1}(Z_{s-})\,dY_s$.

Let $JT$ be the set of jump times of $Y$ on $[0,t]$, we conclude that
$$\det \Gamma[R,R^t]=0\quad\Leftrightarrow\quad \dim\mathcal{L}(U_\alpha\,;\,\alpha\in JT)<p.$$
Let $A=\{\omega\,:\,\dim\mathcal{L}(U_\alpha\,;\,\alpha\in
JT)<p\}$. Reasoning  on $A$, there exist
$\lambda_1,\ldots,\lambda_p$ such that
\begin{equation}\label{ex2somme}
\sum_{k=1}^p\lambda_k\left(\psi_{k1}(Z_{\alpha-})+\int_{[\alpha}^t\partial_1\psi_{k1}(Z_{s-})\,dY_s\right)=0\quad\forall\alpha\in JT,
\end{equation}
now, since $\sigma(\mathbb{R}_+\backslash\{0\})=+\infty$, $JT$ is a dense countable subset
of $[0,t]$, so that taking left limits in (\ref{ex2somme}), using
(\ref{ex2somme}) anew and the fact that $\psi$ is $\mathcal{C}^1$,
we obtain
$$\sum_{k=1}^p\lambda_k\psi_{k1}(Z_{\alpha-})=0\quad\forall\alpha\in JT\quad\mbox{hence}\quad\forall\alpha\in]0,t]$$ thus, on $A$, we have $\dim \mathcal{L}(\psi_{.1}(Z_{s-});s\in]0,t])<p$.

Then EID property yields the conclusion.
\end{proof}

The lent particle method and (EID) property may be applied to density results for solutions of stochastic differential equations driven by L\'evy processes or random measures under Lipschitz hypotheses. Let us mention also that the gradient $\sharp$ defined in \S 3.2 has the property to be easily iterated, this allows to obtain conclusions on $C^\infty$-regularity in the case of smooth coefficients. These applications will be investigated in forthcoming articles.
\subsection{A regular case violating H\"ormander conditions} In spite of the difficulty of the proofs, applying the method is quite easy.   This will be pushed forward in another article, we are just showing here an extremely simple case, example of situations  rarely taken in account in the literature.

\noindent a) Let us consider the following sde driven by a two dimensional Brownian motion
\begin{equation}\label{diffusion}\left\{\begin{array}{rl}
X^1_t&=z_1+\int_0^tdB^1_s\\
X^2_t&=z_2+\int_0^t2X^1_sdB^1_s+\int_0^tdB^2_s\\
X^3_t&=z_3+\int_0^tX^1_sdB^1_s+2\int_0^tdB^2_s.
\end{array}\right.
\end{equation}
This diffusion is degenerate and the H\"ormander conditions are not fulfilled. The generator is $A=\frac{1}{2}(U_1^2+U_2^2)+V$
and its adjoint $A^\ast=\frac{1}{2}(U_1^2+U_2^2)-V$ with  
$U_1=\frac{\partial}{\partial x_1}+2x_1\frac{\partial}{\partial x_2}+x_1\frac{\partial}{\partial x_3}$,
$U_2=\frac{\partial}{\partial x_2}+2\frac{\partial}{\partial x_3}$ and $V=-\frac{\partial}{\partial z_2}-\frac{1}{2}\frac{\partial}{\partial z_3}$. The Lie brackets of these vectors vanish and the Lie algebra is of dimension 2 :
the diffusion remains on the quadric of equation
$\frac{3}{4}x_1^2-x_2+\frac{1}{2}x_3-\frac{3}{4}t=C.$

\noindent b) Let us now consider the same equation driven by a L\'evy process :

$$\left\{\begin{array}{rl}
Z^1_t&=z_1+\int_0^tdY^1_s\\
Z^2_t&=z_2+\int_0^t2Z^1_{s_-}dY^1_s+\int_0^tdY^2_s\\
Z^3_t&=z_3+\int_0^tZ^1_{s_-}dY^1_s+2\int_0^tdY^2_s
\end{array}\right.
$$
under hypotheses on the L\'evy measure  such that the bottom space may be equipped with the carr\'e du champ operator 
$\gamma[f]=y_1^2f^{\prime 2}_1+y_2^2f^{\prime 2}_2$ satisfying (BC) and our hypotheses yielding EID.
Let us apply the lent particle method.
$$\mbox{ For }\alpha\leq t\qquad\varepsilon^+_{(\alpha,y_1,y_2)}Z_t=Z_t+\left(
\begin{array}{c}
y_1\\
2Y^1_{\alpha-}y_1+2\int_{]\alpha}^ty_1dY^1_s+y_2\\
Y^1_{\alpha-}y_1+\int_{]\alpha}^ty_1dY^1_s+2y_2
\end{array}\right)=Z_t+\left(
\begin{array}{c}
y_1\\
2y_1Y_t^1+y_2\\
y_1Y_t^1+2y_2
\end{array}\right).
$$
where we have used $Y^1_{\alpha-}=Y^1_\alpha$ because $\varepsilon^+$ send into $\mathbb{P}\times \nu$ classes. That gives
$$\gamma[\varepsilon^+Z_t]=\left(
\begin{array}{lcr}
y_1^2&y_1^22Y^1_t&y_1^2Y^1_t\\
id&y_1^24(Y^1_t)^2+y_2^2&y_1^22(Y^1_t)^2+2y_2^2\\
id&id&y_1^2(Y_t^1)^2+4y_2^2
\end{array}
\right)$$
and 
$$\varepsilon^-\gamma[\varepsilon^+Z_t]=\left(
\begin{array}{lcr}
y_1^2&y_1^22(Y^1_t-\Delta Y_\alpha^1)&y_1^2(Y^1_t-\Delta Y_\alpha^1)\\
id&y_1^24(Y^1_t-\Delta Y_\alpha^1)^2+y_2^2&y_1^22(Y^1_t-\Delta Y_\alpha^1)^2+2y_2^2\\
id&id&y_1^2(Y_t^1-\Delta Y_\alpha^1)^2+4y_2^2
\end{array}
\right)$$
hence
$$\Gamma[Z_t]=\sum_{\alpha\leq t}(\Delta Y_\alpha^1)^2
\left(
\begin{array}{lcr}
1&2(Y^1_t-\Delta Y_\alpha^1)&(Y^1_t-\Delta Y_\alpha^1)\\
id&4(Y^1_t-\Delta Y_\alpha^1)^2&2(Y^1_t-\Delta Y_\alpha^1)^2\\
id&id&(Y_t^1-\Delta Y_\alpha^1)^2
\end{array}
\right)+(\Delta Y^2_\alpha)^2
\left(
\begin{array}{lcr}
0&0&0\\
0&1&2\\
0&2&4
\end{array}
\right).
$$
If the L\'evy measures of  $Y^1$ and $Y^2$ are infinite, it follows that $Z_t$ has a density as soon as 
$$
\mbox{dim }\mathcal{L}\left\{\left(
\begin{array}{c}
1\\
2(Y^1_t-\Delta Y_\alpha^1)\\
(Y^1_t-\Delta Y_\alpha^1)
\end{array}\right), \left(\begin{array}{c}
0\\
1\\
2
\end{array}\right)\quad\alpha\in JT\right\}=3.$$
But  $Y^1$ possesses necessarily jumps of different sizes, hence  $Z_t$ has a density on $\mathbb{R}^3$.

It follows that the integro-differential operator
$$\tilde{A}f(z)=
\int\!\left[f(z)-f\!
\left(\begin{array}{c}
z_1+y_1\\
z_2+2z_1y_1+y_2\\
z_3+z_1y_1+2y_2
\end{array}\right)
-(f^\prime_1(z)\;f^\prime_2(z)\;f^\prime_3(z))\left(
\begin{array}{c}
y_1\\
2z_1y_1+y_2\\
z_1y_1+2y_2
\end{array}\right)
\right]\sigma(dy_1dy_2)
$$
is hypoelliptic at order zero, in the sense that its semigroup $P_t$ has a density. No minoration is supposed of the growth of the L\'evy measure near 0 as assumed by many authors. 

This result implies that for any L\'evy process $Y$ satisfying the above hypotheses, even a subordinated one in the sense of Bochner, the process $Z$ is never subordinated of the Markov process $X$ solution of equation (\ref{diffusion}).

 Ecole des Ponts,\\
ParisTech, Paris-Est\\
6 Avenue Blaise Pascal\\ 77455 Marne-La-Vallée Cedex 2
FRANCE\\bouleau@enpc.fr \\  \\ Equipe Analyse et Probabilités,
\\Universit\'{e} d'Evry-Val-d'Essonne,\\Boulevard François Mitterrand\\
91025 EVRY Cedex FRANCE\\ldenis@univ-evry.fr
 \end{document}